\documentclass[reqno,11pt]{amsart}

\usepackage{xcolor}
\usepackage{graphicx}
\usepackage[hidelinks]{hyperref}
\usepackage{amscd}

\usepackage{latexsym,array}
\usepackage{amsfonts}
\usepackage{shadow}
\usepackage{amsbsy}
\usepackage{amssymb, amsmath, enumerate}
\usepackage{epsfig}
\usepackage{dsfont}
\usepackage{mathtools}
\usepackage{doi}
\usepackage[notref, notcite, final]{showkeys}
\usepackage{fullpage}
\usepackage{comment}
\usepackage[latin1]{inputenc}

\usepackage{cleveref}

\numberwithin{equation}{section}

\newcommand{\qcs}{\mathcal Q_{c,s}(T)^{-1}}
\newcommand{\qcp}{\mathcal Q_{c,p}(T)^{-1}}

\newcommand{\PRes}{\mathcal{Q}}

\theoremstyle{theorem}
\newtheorem{theorem}{Theorem}[section]
\newtheorem{lemma}[theorem]{Lemma}
\newtheorem{corollary}[theorem]{Corollary}
\newtheorem{proposition}[theorem]{Proposition}

\theoremstyle{definition}
\newtheorem{remark}[theorem]{{\bf Remark}}
\newtheorem{definition}[theorem]{Definition}
\newtheorem{prob}[theorem]{Problem}

\newcommand{\cc}{\mathbb{C}}

\newcommand{\hh}{\mathbb{H}}

\newcommand{\pp}{\partial}

\newcommand{\unq}{\underline{q}}

\renewcommand{\Re}{\mathrm{Re}}
\renewcommand{\Im}{\mathrm{Im}}

% units

\crefname{enumi}{}{}
\crefname{enumii}{}{}

\title[]{Axially harmonic functions
  and the \\ harmonic functional calculus on the $S$-spectrum}
\author{FABRIZIO COLOMBO, ANTONINO DE MARTINO, STEFANO PINTON, AND IRENE SABADINI }
\date{}

\begin{document}
	
	\maketitle
	
	\begin{abstract}
The spectral theory on the $S$-spectrum was introduced to give
an appropriate mathematical setting to quaternionic quantum mechanics,
but it was soon realized that there were different applications of this theory, for example,
to fractional heat diffusion and to the
spectral theory for the Dirac operator on manifolds.
 In this seminal paper we introduce the  harmonic functional calculus based on the $S$-spectrum and on an integral representation of axially harmonic functions. This calculus can be seen as a bridge between harmonic analysis and the spectral theory.
 The resolvent operator of the harmonic functional calculus
    is the commutative version of the pseudo $S$-resolvent operator. This new calculus also
  appears, in a natural way, in the product rule for the $F$-functional calculus.
\end{abstract}

\medskip
\noindent AMS Classification  47A10, 47A60.

\noindent Keywords: Harmonic analysis, $S$-spectrum, integral representation of axially harmonic functions,
harmonic functional calculus, resolvent equation, Riesz projectors, F-functional calculus.

\date{today}
\tableofcontents

\section{Introduction}

The notion of $S$-spectrum and of $S$-resolvent operators for quaternionic linear operators and for linear Clifford operators have been identified only in 2006 using
methods in hypercomplex analysis.
The original motivation for the investigation of a new notion of spectrum
was the paper \cite{BF} of  G. Birkhoff and  J. von Neumann, where the authors showed that quantum mechanics can also be formulated using
quaternions, but they did not
specify what notion of spectrum one should use for quaternionic linear operators.
The appropriate notion is that of $S$-spectrum, which has also been used for the spectral theorem for quaternionic linear operators, see \cite{ack}, and for Clifford algebra linear operators, see \cite{SPECT-CLIFF}.

\medskip
The quaternionic spectral theory on the $S$-spectrum is systematically organised in the books
\cite{FJBOOK,CGKBOOK} while for the Clifford setting  see \cite{MR2752913}.
In particular, the history of the discovery of the $S$-spectrum and the formulation of the $S$-functional calculus are explained
in the introduction of the book \cite{CGKBOOK} with a complete list of references.

\medskip
Nowadays there are several research directions in the area of the spectral theory on the $S$-spectrum, and without claiming completeness we mention:
the characteristic operator function, see \cite{COF},
 slice hyperholomorphic Schur analysis, see \cite{ACSBOOK},
and several applications to fractional powers of vector operators that describe fractional Fourier's laws for nonhomogeneous materials, see for example \cite{frac6,frac4,frac5}.
These results on the fractional powers are based on the
 $H^\infty$-functional calculus (see the seminal papers \cite{Hinfty}, \cite{64FRAC}).

\medskip
{\em The main purpose of this paper is to show that using the Fueter mapping theorem and the spectral theory on the $S$-spectrum we can define a functional calculus for harmonic functions in four variables.
This new calculus can be seen as the harmonic version of the Riesz-Dunford functional calculus.
}
	
Before to explain our results we need some further explanations
of the setting in which we will work.

\subsection{The Fueter-Sce-Qian  extension theorem and spectral theories}
The Fueter-Sce-Qian mapping theorem is a crucial result
 that constructs hyperholomorphic (in a suitable sense) quaternionic or Clifford algebra valued functions starting from  holomorphic functions of one complex variable and it consists of a two steps procedure: the first step gives slice hyperholomorphic functions and
the second one gives the Fueter regular functions in the case of the quaternions, or monogenic functions in the Clifford algebra setting.
Prior to the introduction of slice hyperholomorphic functions,
 the first step
 was simply seen as an intermediate step in the construction.
We point out that the Fueter-Sce-Qian mapping theorem has deep consequences in the spectral theories, in fact
it determines their structures in the hypercomplex setting, as we shall see in the sequel.

\medskip
To further clarify the two steps procedure we summarize the construction in the quaternionic  case by following Fueter, see \cite{F}.
Denoting by $\mathcal{O}(D)$ the set of holomorphic functions on $D$, by ${SH(\Omega_D)}$ the set
of induced functions on $\Omega_D$ (which turn out to be the set of slice hyperholomorphic functions) and by $AM(\Omega_D)$ the set of axially monogenic functions on $\Omega_D$, the Fueter construction can be visualized as:
$$
\begin{CD}
\textcolor{black}{\mathcal{O}(D)}  @>T_{F1}>> \textcolor{black}{SH(\Omega_D)}  @>\ \   T_{F2}=\Delta >>\textcolor{black}{AM(\Omega_D)},
\end{CD}
$$
where $T_{F1}$ denotes the first linear operator of the Fueter construction and $T_{F2}=\Delta$ is the Laplace operator in four dimensions.
The Fueter mapping theorem induces two spectral theories: in the first step we have the spectral theory on the $S$-spectrum associated with the Cauchy formula of slice hyperholomorphic functions;  in the second step we obtain the spectral theory on the monogenic spectrum associated with
the Cauchy formula of monogenic functions.

\medskip
We also note that the Fueter mapping theorem allows to use slice hyperholomorphic functions to obtain the so-called $F$-functional calculus, see \cite{CDS,CG,CS,CSS1}, which is a monogenic functional calculus on the $S$-spectrum.
It is based on the idea of applying the operator $T_{F2}$ to the slice hyperholomorphic Cauchy kernel,
 as illustrated by the diagram:
\begin{equation*}
\begin{CD}
{SH(U)} @.  {AM(U)} \\   @V  VV
  @.
\\
{{\rm  Slice\ Cauchy \ Formula}}  @> T_{F2}=\Delta>> {{\rm Fueter\ theorem \ in \  integral\  form}}
\\
@V VV    @V VV
\\
{S-{\rm Functional \ calculus}} @. {F-{\rm Functional \ calculus}}
\end{CD}
\end{equation*}
where $\Delta$ is the Laplace operator in four dimensions.
\begin{remark}
Observe that in the above diagram the arrow from the space of axially monogenic functions $AM(U)$ is missing because the $F$-functional calculus is deduced from the slice hyperholomorphic Cauchy formula. Moreover, we use the set of slice hyperholomorphic functions $SH(U)$, that contains the set of intrinsic functions.
\end{remark}
To proceed further we  fix the notation for the quaternions, that are  defined as follows
	$$
\mathbb{H}=\lbrace{q=q_0+q_1e_1+q_2e_2+q_3e_3 \ \ |\   \ q_0,q_1,q_2,q_3\in\mathbb{R}}\rbrace,
$$
where the imaginary units satisfy the relations
	$$e_1^2=e_2^2=e_3^2=-1\quad \text{and}\quad e_1e_2=-e_2e_1=e_3,\ \  e_2e_3=-e_3e_2=e_1, \ \  e_3e_1=-e_1e_3=e_2.$$
Given $q\in\mathbb H$
we call $ \Re(q):=q_0$ the real part of $q$ and
$ \underline{q}\,=q_1e_1+q_2e_2+q_3e_3$  the imaginary part.
	The modulus of $q\in \mathbb{H}$ is given by
	$|q|=\sqrt{q_0^2+q_1^2+q_2^2+q_3^2},$ the conjugate of $q$ is defined by
	$\overline{q}=q_0- \underline{q}$ and we have $|q|=\sqrt{q\overline{q}}$.
The symbol $ \mathbb{S}$ denotes the unit sphere of purely imaginary quaternions
	$$ \mathbb{S}= \{\underline{q}=q_1e_1+q_2e_2+q_3e_3\ \ | \ \  \, q_1^2+q_2^2+q_3^2=1\} .$$
	Notice that if $J \in \mathbb{S}$, then $J^2=-1$. Therefore $J$ is an imaginary unit, and we denote by
	$$ \mathbb{C}_J=\{u+Jv\ \ \ | \ \ u,v \in \mathbb{R}\},$$
	an isomorphic copy of the complex numbers. Given a non-real quaternion $q= q_0+ \underline{q}= q_0+J_q | \underline{q}|$, we set $J_q= \underline{q}/ | \underline{q}|  \in \mathbb{S}$ and we associate to $q$ the 2-sphere defined by
	$$ [q]:= \{q_0+J |\underline{q}| \ \ | \ \ J \in \mathbb{S}\}.$$	
We recall that the Fueter operator $\mathcal{D}$ and its conjugate $\overline{\mathcal{D}}$ are defined as follows
$$
 \mathcal{D}:= \partial_{q_0}+ \sum_{i=1}^{3} e_i \partial_{q_i}
 \ \ \ \
 {\rm and}\ \ \ \
 \overline{\mathcal{D}}:= \partial_{q_0}- \sum_{i=1}^{3} e_i \partial_{q_i}.
 $$
 The operators $\mathcal{D}$ and $\overline{\mathcal{D}}$ factorize the Laplace operator $\mathcal{D}\overline{\mathcal{D}}=\overline{\mathcal{D}} \mathcal{D}=\Delta$.

\subsection{The fine structure of hyperholomorphic spectral theory and related problems }
In this paper we further refine the above diagram, observing that,
in the case of the quaternions, the map $T_{F2}$ can be factorized as
$T_{F2}=\Delta =\overline{\mathcal{D}}\mathcal{D}$, so there is an intermediate step between slice hyperholomorphic functions and Fueter regular functions, and the intermediate class of functions that appears is the one
 of axially harmonic $AH(U)$ functions, see Definition \ref{axharm}. Thus the diagram becomes as follows:
\begin{equation*}
\begin{CD}
\textcolor{black}{\mathcal{O}(D)}  @>T_{F1}>> \textcolor{black}{SH(\Omega_D)}  @>\ \   \mathcal{D}>>\textcolor{black}{AH(\Omega_D)}
@>\ \   \overline{\mathcal{D}} >>\textcolor{black}{AM(\Omega_D)}.
\end{CD}
\end{equation*}

It is important to define precisely what we mean by intermediate functional calculus
between the $S$-functional calculus and the $F$-functional calculus, both from the points of view of the function theory and of the operator theory. The notions of
{\em fine structure of the spectral theory on the $S$-spectrum} arise naturally from the Fueter extension theorem.

\begin{definition}[Fine structure of the  spectral theory on the $S$-spectrum]
\label{fine}
We will call {\em fine structure of the  spectral theory on the $S$-spectrum}
 the set of functions spaces and the associated functional calculi
 induced by a factorization of the operator $T_{F2}$, in the Fueter extension theorem.
 \end{definition}
 \begin{remark}
 In the Clifford algebra setting the map $T_{F2}$ becomes the Fueter-Sce operator given by  $T_{FS2}= \Delta_{n+1}^{\frac{n-1}{2}}$ and its splitting is more involved.
   We are investigating it in general, when $n$ is odd, and in the case $n=5$ we have a complete description of all the possible fine structures, see \cite{FIVEDIM}. When $n$ is even the Laplace operator has a fractional power and so one has to work in the space of distributions using the Fourier multipliers, see \cite{Q}.
\end{remark}

The fine structure of the quaternionic spectral theory on the $S$-spectrum is illustrated in the following diagram
	{\small
		\begin{equation*}
			\begin{CD}
				{SH(U)} @. {AH(U)}  @.  {AM(U)} \\   @V  VV
				@.
				\\
				{{\rm  Slice\ Cauchy \ Formula}}  @> \mathcal{D} >> {AH {\rm \ in \  integral\  form}}@> \overline{\mathcal{D}} >> {{\rm Fueter\ theorem \ in \  integral\  form}}
				\\
				@V VV    @V VV  @V VV
				\\
				S-{{\rm Functional \ calculus}} @. {{\rm Harmonic \ functional \ calculus}}@. F-{{\rm functional \ calculus}}
			\end{CD}
		\end{equation*}
	}
\newline
\newline
where the description of the central part of the diagram, i.e., the fine structure, is the main topic of this paper.
	\begin{remark}
		As for the  space of axially monogenic functions, the arrow from the space of axially harmonic functions is missing. In fact, like the $F$-functional calculus, also the harmonic functional calculus is deduced from the slice hyperholomorphic Cauchy formula.
\end{remark}
To sum up, the main problems addressed in this paper are:
		
	\begin{prob}\label{p02}
In the Fueter extension theorem
		\begin{equation*}
\begin{CD}
 \textcolor{black}{SH(U)}  @>\ \   \mathcal{D}>>\textcolor{black}{X(U)}
@>\ \   \overline{\mathcal{D}} >>\textcolor{black}{AM(\Omega_D)},
\end{CD}
\end{equation*}
give an integral representation of the functions in the space $X(U):=\mathcal{D}(SH(U))$ and, using this integral transform, define its functional calculus.
	\end{prob}
	\begin{prob}\label{prob2}
		Determine a product rule formula for the $F$- functional calculus.
	\end{prob}
As we will see in the sequel the above problems are related. In fact, the product rule of the $F$-functional calculus is based on the functional calculus in Problem \ref{p02}.

\subsection{Structure of the paper and  main results.}
The paper consists of 9 sections, the first one being this introduction.
In Sections 2 and 3 we give the preliminary material on spectral theories in the hyperholomorphic
setting and the underlying function theories. In Section 3 and 4
we consider axially harmonic functions, see Definition \ref{axharm}.
Using the Cauchy formulas of left (resp. right) slice hyperholomorphic functions we write an integral representation for axially harmonic functions, see
Theorem \ref{qthe}.
More precisely, let $W \subset \mathbb{H}$ be an open set and
let $U$ be a slice Cauchy domain such that $\overline{U} \subset W$. Then for $J \in \mathbb{S}$ and $ds_J=ds(-J)$ we have that
if $f$ is left slice hyperholomorphic on $W$, then the function $ \tilde{f}(q)=\mathcal{D} f(q)$ is harmonic and it admits the following integral representation
			\begin{equation}
				\tilde{f}(q)=- \frac{1}{\pi} \int_{\partial(U \cap \mathbb{C}_J)} \mathcal{Q}_{c,s}(q)^{-1}ds_J f(s),\ \ \ \ q\in  U,
			\end{equation}
where
$$
\PRes_{c,s}(q)^{-1}:=(s^2-2\Re(q)s+|q|^2)^{-1}.
$$
A similar representation is obtained also for right slice hyperholomorphic functions. The integral depends neither on $U$ nor on the imaginary unit $J  \in \mathbb{S}$. Such integral representation is the crucial point to define the harmonic functional calculus,
also called {\em $Q$-functional calculus} because
its resolvent operator is the commutative pseudo $S$-resolvent operator $\mathcal{Q_{c,s}}(T)^{-1}$.

\medskip
Let $T=T_0+T_1e_1+T_2e_2+T_3e_3$ be a quaternionic bounded linear operator with commuting components $T_\ell$, $\ell=0,...,3$, we define the $S$-spectrum of $T$ as
$$
\sigma_S(T)=\{s\in \mathbb{H} \ \ | \ \ s^2-s(T+\overline{T})+T \overline{T} \ \ {\rm is\ not \ invertible}\},
$$
where  $\overline{T}=T_0-T_1e_1-T_2e_2-T_3e_3$.
The commutative pseudo $S$-resolvent operator $\mathcal{Q}_{c,s}(T)^{-1}$ is defined as:
$$
\mathcal{Q}_{c,s}(T)^{-1}=(s^2-s(T+\overline{T})+T \overline{T})^{-1}
$$
for $s\not\in \sigma_S(T)$.
The harmonic functional calculus is defined in Definition \ref{qfun}, but roughly speaking for every function $\tilde{f}=\mathcal{D} f$, with $f$ left slice hyperholomorphic,
  we define the harmonic functional calculus as
		$$
			\tilde{f}(T):= - \frac{1}{\pi} \int_{\partial(U \cap \mathbb{C}_J)} \mathcal{Q}_{c,s}(T)^{-1} ds_J f(s),
		$$
where  $U$ is an arbitrary bounded slice Cauchy domain with $\sigma_{S}(T) \subset U$, $ \overline{U} \subset dom(f)$, $ds_J=ds(-J)$ and $J \in \mathbb{S}$ is an arbitrary imaginary unit.
		A similar definition holds for $\tilde{f}= f\mathcal{D}$ with $f$ right slice hyperholomorphic.

\medskip		
In Section 6 we introduce possible resolvent equations for the harmonic functional calculus
and in Sections 7 and 8 we study some of its properties. In particular, we have
the Riesz projectors, see Theorem \ref{rp}.
		Specifically, let $T=T_1e_1+T_2e_2+T_3e_3$ and assume that the operators $T_\ell$, $\ell=1,\, 2,\, 3$, have real spectrum. Let $\sigma_S(T)=\sigma_1\cup\sigma_2$ with $\operatorname{dist}(\sigma_1,\sigma_2)>0$ and assume that		
		 $G_1,\, G_2\subset\mathbb H$ are two bounded slice Cauchy domains such that $\sigma_1\subset G_1$, $\overline G_1\subset G_2$ and $\operatorname{dist}(G_2,\sigma_2)>0$. Then the operator
		$$
		\tilde P:=\frac 1{2\pi}\int_{\partial (G_2\cap\mathbb C_J)} s\, ds_J\qcs
		$$
		is a projection, i.e.,
		$
		\tilde P^2=\tilde P.
		$
		Moreover, the operator $\tilde{P}$ commutes with $T$.
\\
      Section 9 concludes the paper and contains some properties of the $F$-functional calculus,  such as the product rule,
that can be proved using the $Q$-functional calculus.

	\section{Preliminary results on functions and operators }

We recall some basic results and notations that we will need in the following.
\subsection{Hyperholomorphic functions and the Fueter mapping theorem}

	\begin{definition}
		Let $U \subseteq \mathbb{H}$.
		\begin{itemize}
			\item We say that $U$ is axially symmetric if, for every $u+Iv \in U$, all the elements $u+Jv$ for $J \in \mathbb{S}$ are contained in $U$.
			\item We say that $U$ is a slice domain if $U \cap \mathbb{R} \neq \emptyset$ and if $U \cap \mathbb{C}_J$ is a domain in $\mathbb{C}_J$ for every $J \in \mathbb{S}$.
		\end{itemize}
		
	\end{definition}
	\begin{definition}
		An axially symmetric open set $U \subset \mathbb{H}$ is called slice Cauchy domain if $U \cap \mathbb{C}_J$ is a Cauchy domain in $ \mathbb{C}_J$ for every $J \in \mathbb{S}$. More precisely, $U$ is a slice Cauchy domain if, for every $J \in \mathbb{S}$, the boundary of $ U \cap \mathbb{C}_J$ is the union of a finite number of nonintersecting piecewise continuously differentiable Jordan curves in $ \mathbb{C}_J$.
	\end{definition}
	On axially symmetric open sets we define the class of slice hyperholomorphic functions.
	\begin{definition}[Slice hyperholomorphic functions]
		\label{hyper}
		Let $U \subseteq \mathbb{H}$ be an axially symmetric open set and let
$$
 \mathcal{U} =\{ (u,v)\in \mathbb{R}^2 \ | \ u+\mathbb{S}v \in U\}.
 $$
 We say that a function $f: U \to \mathbb{H}$ of the form
		$$ f(q)=\alpha(u,v)+J\beta(u,v)$$
		is left slice hyperholomorphic if $\alpha$ and $\beta$ are $ \mathbb{H}$-valued differentiable functions such that
\begin{equation}\label{eveodd}
 \alpha(u,v)=\alpha(u,-v), \ \ \ \  \beta(u,v)=-\beta(u,-v) \ \\ \ \  \hbox{for all} \, \, (u,v) \in \mathcal{U},
\end{equation}
		and if $\alpha$ and $\beta$ satisfy the Cauchy-Riemann system
		$$ \partial_u \alpha(u,v)- \partial_v \beta(u,v)=0, \quad \partial_v \alpha(u,v)+ \partial_u \beta(u,v)=0.$$
		Right slice hyperholomorphic functions are of the form
		$$ f(q)= \alpha(u,v)+\beta(u,v)J,$$
		where $\alpha$, $\beta$ satisfy the above conditions.
	\end{definition}
\textbf{Notation}
	The set of left (resp. right) slice hyperholomorphic functions on $U$ is denoted by the symbol $SH_{L}(U)$ (resp. $SH_{R}(U)$). The subset of intrinsic slice hyperholomorphic functions consists of those slice hyperholomorphic functions such that $\alpha$, $\beta$ are real-valued function and is denoted by $ N(U)$.
\begin{remark}\label{remevenodd} If the axially symmetric set $U$ does not intersect the real line then
we can set
$$
 \mathcal{U} =\{ (u,v)\in \mathbb{R}\times\mathbb{R} \ | \ u+\mathbb{S}v \in U\}.
 $$
 and 
 $$ f(q)= \alpha(u,v)+J\beta(u,v),\qquad (u,v)\in\mathcal{U}.$$
 The function $f$ is left slice hyperholomorphic if $\alpha$ and $\beta$ satisfy the Cauchy-Riemann system. Similarly, under the same conditions on $\alpha$ and $\beta$, $ f(q)= \alpha(u,v)+\beta(u,v)J$ is said right slice hyperholomorphic.
 
\end{remark}
	Functions in the kernel of the Fueter operator are called Fueter regular functions and are defined as follows.
	\begin{definition}[Fueter regular functions]\label{d2}
		Let $U\subset \mathbb H$ be an open set. A real differentiable function $ f: U \to \mathbb{H}$ is called left Fueter regular if
		$$\mathcal{D} f(q):=\partial_{q_0} f(q)+\sum_{i=1}^3 e_i \partial_{q_i}f(q)=0.$$
		It is called right Fueter regular if
		$$ f (q) \mathcal{D}:=\partial_{q_0} f(q)+\sum_{i=1}^3  \partial_{q_i}f(q) e_i=0.$$
	\end{definition}

	There are several possible definitions of slice hyperholomorphicity, that are not fully equivalent, but Definition \ref{hyper} of slice hyperholomorphic functions is the most appropriate one for the operator theory and it comes from the Fueter mapping theorem (see \cite{F}).

\begin{theorem}[Fueter mapping theorem]
		\label{F1}
		Let $f_{0}(z)= \alpha(u,v)+i \beta(u,v)$ be a holomorphic function defined in a domain (open and connected) $D$ in the upper-half complex plane and let
		$$ \Omega_D=\{q=q_0+\underline{q} \,\  |\  \ (q_0, |\underline{q}|) \in D\}$$
		be the open set induced by $D$ in $\mathbb{H}$. Then the operator $T_{F1}$ defined by
		$$ f(q)= T_{F1}(f_0):= \alpha(q_0, |\underline{q}|)+ \frac{\underline{q}}{|\underline{q}|}\beta(q_0, |\underline{q}|)$$
		maps the set of holomorphic functions in the set of intrinsic slice hyperholomorphic functions. Moreover, the function
		$$ \breve{f}(q):=T_{F2} \left(\alpha(q_0, |\underline{q}|)+ \frac{\underline{q}}{|\underline{q}|}\beta(q_0, |\underline{q}|)\right),$$
		where $T_{F2}= \Delta$ and $\Delta$ is the Laplacian in four real variables $q_{\ell}$, $ \ell=0,1,2,3$, is in the kernel of the Fueter operator i.e.
		$$ \mathcal{D} \breve{f}=0 \quad \hbox{on} \quad \Omega_D.$$
	\end{theorem}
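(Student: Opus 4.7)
I would verify the two assertions of the theorem in sequence, following the spirit of Fueter's original argument. First I would check that $f=T_{F1}(f_0)$ is intrinsic slice hyperholomorphic on $\Omega_D$. Write $q=q_0+\underline{q}$ and set $u=q_0$, $v=|\underline{q}|$, $J_q=\underline{q}/|\underline{q}|$. Since $f_0=\alpha+i\beta$ is holomorphic on $D$, the components $\alpha,\beta$ are real valued and satisfy the Cauchy-Riemann system there. Extending them to $v<0$ by $\alpha(u,-v):=\alpha(u,v)$ and $\beta(u,-v):=-\beta(u,v)$ makes the parity conditions \eqref{eveodd} hold by construction and preserves the Cauchy-Riemann equations, so $f\in N(\Omega_D)$ in the sense of Definition \ref{hyper}.

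The core of the argument is the explicit computation of $\mathcal{D} f$. Using $\partial_{q_i}|\underline{q}|=q_i/|\underline{q}|$ and $\partial_{q_i}(\underline{q}/|\underline{q}|)=e_i/|\underline{q}|-q_i\underline{q}/|\underline{q}|^3$, one collects the three types of contributions produced by the chain rule applied to $f=\alpha(u,v)+(\underline{q}/|\underline{q}|)\beta(u,v)$. The identities $\sum_{i=1}^{3} e_i^2=-3$, $\sum_{i=1}^{3} e_i q_i=\underline{q}$ and $\underline{q}^{\,2}=-|\underline{q}|^2$ simplify the sums and give
$$
\mathcal{D} f=(\partial_u\alpha-\partial_v\beta)+\frac{\underline{q}}{|\underline{q}|}\bigl(\partial_u\beta+\partial_v\alpha\bigr)-\frac{2\beta(u,v)}{|\underline{q}|}.
$$
By the Cauchy-Riemann equations the first two brackets vanish, leaving the purely scalar expression
$$
\mathcal{D} f(q)=-\frac{2\,\beta(q_0,|\underline{q}|)}{|\underline{q}|}.
$$

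To conclude, I would use that $\mathcal{D}$ and $\overline{\mathcal{D}}$ have constant real coefficients and therefore commute, so $\mathcal{D}\,\Delta=\mathcal{D}\,\overline{\mathcal{D}}\,\mathcal{D}=\Delta\,\mathcal{D}$, and it suffices to show $\Delta(\mathcal{D} f)=0$. For a function depending only on $u$ and $v=|\underline{q}|$, the four-dimensional Laplacian reduces to the radial expression $\Delta g=\partial_u^2 g+\partial_v^2 g+(2/v)\partial_v g$. Applying this to $g=-2\beta/v$, the $1/v^2$ and $1/v^3$ terms cancel pairwise, leaving $\Delta(-2\beta/v)=-(2/v)(\partial_u^2\beta+\partial_v^2\beta)$, which vanishes because $\beta$ is harmonic in $(u,v)$ as the imaginary part of a holomorphic function. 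Hence $\mathcal{D}\breve{f}=0$ on $\Omega_D$. The only genuinely delicate point is the calculation of $\mathcal{D} f$, where one must keep careful track of the non-commutative order of the factors $e_i$ and $\underline{q}$; the rest reduces quickly to the two-dimensional harmonicity of $\beta$.
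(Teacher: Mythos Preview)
Your argument is correct and follows the classical line of Fueter's original proof. Note, however, that the paper does not actually prove Theorem~\ref{F1}: it is stated in the preliminaries as a known result, with a reference to Fueter's 1934 paper, and is used only as background for the refined version (Theorem~\ref{F2}) and the integral representations that follow. So there is no ``paper's proof'' to compare against here.

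That said, your computation is sound. The identity $\mathcal{D}f=-2\beta(q_0,|\underline{q}|)/|\underline{q}|$ is exactly the scalar output one expects, and your reduction of the four-variable Laplacian on radial functions to $\partial_u^2+\partial_v^2+(2/v)\partial_v$ is correct; the subsequent cancellation of the $1/v^2$ and $1/v^3$ terms leaving $-(2/v)\Delta_2\beta$ is clean. One small remark: in the first step you should be explicit that $D$ lies in the \emph{open} upper half-plane, so the even/odd extension of $\alpha,\beta$ across $v=0$ is vacuous on $\Omega_D$ and no regularity issue arises there; the paper's Definition~\ref{hyper} and Remark~\ref{remevenodd} handle this point in the same way.
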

	
\begin{remark}
	In the late of 1950s, M. Sce extended the Fueter mapping theorem to the Clifford setting
 in the case of odd dimensions, see \cite{S}. In this case, the operator $T_{FS2}$ becomes $T_{FS2}:= \Delta_{n+1}^{\frac{n-1}{2}}$, where $ \Delta_{n+1}$ is the Laplacian in $n+1$ dimensions, so in this case we are dealing with a differential operator. For a translation of M. Sce works in hypercomplex analysis
 with commentaries see \cite{CSS3}; this includes also the version of the Fueter mapping theorem for octonions.
In 1997, T. Qian proved that the Fueter-Sce theorem holds also in the case of even dimensions. In this case the operator $\Delta_{n+1}^{\frac{n-1}{2}}$ is a fractional operator, see \cite{Q, TAOBOOK}.
	\end{remark}
\medskip
Using the theory of monogenic function  A. McIntosh and his collaborators
 introduced the spectral theory on the monogenic spectrum to define functions of
noncommuting operators on Banach spaces. They developed
 the monogenic functional calculus and several of its applications,
 see the books \cite{JM} and \cite{J}.

We now recall the slice hyperholomorphic Cauchy formulas that are the starting point to construct the hyperholomorphic spectral theories on the $S$-spectrum. We will be in need the following result (\cite[Thm. 2.1.22]{CGKBOOK}, \cite[Prop. 2.1.24]{CGKBOOK}).
	
	\begin{theorem}\label{ts}
		Let $s$, $q\in \mathbb H$ with $|q|<|s|$, then
		$$\sum_{n=0}^{+\infty} q^ns^{-n-1}=-(q^2-2\Re(s)q+|s|^2)^{-1}(q-\overline s)$$
		and
		$$\sum_{n=0}^{+\infty} s^{-n-1}q^n=-(q-\overline s)(q^2-2\Re(s)q+|s|^2)^{-1}.$$
		Moreover, for any $s,\, q\in \mathbb H$ with $q\notin [s]$, we have
		$$ -(q^2-2\Re(s)q+|s|^2)^{-1}(q-\overline s)=(s-\overline q)(s^2-2\Re(q)s+|q|^2)^{-1} $$
		and
		$$ -(q-\overline s)(q^2-2\Re(s)q+|s|^2)^{-1}=(s^2-2\Re(q)s+|q|^2)^{-1}(s-\overline q).$$
	\end{theorem}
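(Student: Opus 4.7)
The plan is to handle the two parts of the statement separately: first the two power-series representations (valid for $|q|<|s|$), then the equivalence of the ``left'' and ``right'' closed-form expressions (valid for $q\notin[s]$).

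For the series identities, the series $F(q):=\sum_{n=0}^{\infty}q^n s^{-n-1}$ converges absolutely on $|q|<|s|$ by the geometric majorant $|q|^n|s|^{-n-1}$, so I only need to identify its sum. I would multiply $F(q)$ from the left by $P:=q^2-2\Re(s)q+|s|^2$ and show the product equals $\bar s-q$. The key observation that makes the noncommutative computation tractable is that $2\Re(s)=s+\bar s$ and $|s|^2=s\bar s=\bar s s$ are real, so they commute with every power $q^n$. After using these commutations, the $n$-th term of $PF(q)$ becomes a combination of $q^{n+2}s^{-n-1}$, $q^{n+1}s^{-n}$, $q^{n+1}\bar s s^{-n-1}$ and $q^n\bar s s^{-n}$; summing and reindexing produces two telescoping cancellations and leaves exactly $\bar s-q=-(q-\bar s)$. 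Inverting $P$ (which is invertible since $|q|<|s|$ forces $q\notin[s]$) yields the first formula. The second formula is obtained from the mirror computation, multiplying $F$ on the right.

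For the equivalence of the two closed forms, it suffices to prove one of the two equalities; set $Q:=s^2-2\Re(q)s+|q|^2$. I would multiply the proposed identity $-P^{-1}(q-\bar s)=(s-\bar q)Q^{-1}$ by $P$ on the left and by $Q$ on the right, reducing it to the polynomial identity
\[
-(q-\bar s)Q=P(s-\bar q).
\]
Expanding both sides and using $q\bar q=|q|^2$, $s\bar s=|s|^2$ together with the commutativity of real scalars, most monomials cancel in pairs; the remaining difference reduces to a single expression of the form $[s,q]s+\bar s[s,\bar q]$, with $[a,b]:=ab-ba$. I would then close the argument with two short subidentities: (i) $[s,\bar q]=-[s,q]$, immediate from $q+\bar q\in\mathbb R$ commuting with $s$; and (ii) $[s,q]s=\bar s[s,q]$, which comes from the scalar identity $2\Re(s)\cdot s=s^2+|s|^2$ and the fact that $2\Re(s)$ commutes with $q$. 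Combining (i) and (ii) gives $[s,q]s+\bar s[s,\bar q]=[s,q]s-\bar s[s,q]=0$, and the second displayed equivalence then follows by the analogous manipulation on the ``right'' version.

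The main obstacle will be the bookkeeping inside the second part: because $q$ and $s$ do not commute, the expansions of $(q-\bar s)Q$ and $P(s-\bar q)$ produce many mixed monomials, and one must track carefully which factors (real scalars, $q\bar q$, $s\bar s$, powers of $q$, powers of $s$) can be moved past each other. The commutator identities (i) and (ii) are the clean lever that makes the otherwise delicate noncommutative remainder collapse; everything else (convergence, invertibility of $P$ on the relevant set, the right-sided analogue) is routine or follows by symmetry.
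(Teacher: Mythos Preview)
The paper does not give its own proof of this theorem: it is stated as a preliminary result and attributed to \cite[Thm.~2.1.22, Prop.~2.1.24]{CGKBOOK}. So there is no in-paper argument to compare against.

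Your proposal is correct and is essentially the standard direct verification. For the series part, multiplying $F(q)=\sum_{n\ge0}q^ns^{-n-1}$ on the left by $P=q^2-2\Re(s)q+|s|^2$ and using that $2\Re(s)$ and $|s|^2$ are real (hence commute with every $q^n$), together with $\bar s s^{-n-1}=|s|^2s^{-n-2}$, does produce two telescoping sums with net value $\bar s-q$; invertibility of $P$ on $|q|<|s|$ follows since $|q|<|s|$ forces $q\notin[s]$. For the closed-form equivalence, clearing denominators reduces the question to the polynomial identity $-(q-\bar s)Q=P(s-\bar q)$, and your commutator lemmas (i) $[s,\bar q]=-[s,q]$ and (ii) $[s,q]s=\bar s[s,q]$ are both valid quaternionic identities (the second following, as you say, from $2\Re(s)s=s^2+|s|^2$ with $2\Re(s)$ central). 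One small caveat: depending on how you group the expansion, the remainder may not present itself \emph{literally} as $[s,q]s+\bar s[s,\bar q]$; an equally clean route is to substitute $2\Re(q)qs=q^2s+|q|^2s$ and $2\Re(s)qs=qs^2+|s|^2q$ directly, after which all terms cancel without invoking commutators. Either way the computation closes.
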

	In view of Theorem \ref{ts} there are two possible representations of the Cauchy kernels for both the left and the right slice hyperholomorphic functions.
\begin{definition}
Let $s,\, q\in \mathbb H$ with $q\notin [s]$ then we define 
$$ \PRes_{s}(q)^{-1}:=(q^2-2\Re(s)q+|s|^2)^{-1} ,\ \ \ \
\PRes_{c,s}(q)^{-1}:=(s^2-2\Re(q)s+|q|^2)^{-1},
$$
\end{definition}
that are called pseudo Cauchy kernel and commutative pseudo Cauchy kernel, respectively.
\begin{definition}\label{d1}
Let $s,\, q\in \mathbb H$ with $q\notin [s]$ then
\begin{itemize}
			\item We say that the left slice hyperholomorphic Cauchy kernel  $S^{-1}_L(s,q)$ is written in the form I if
			$$S^{-1}_L(s,q):=\PRes_{s}(q)^{-1}(\overline s-q).$$
			\item We say that the right slice hyperholomorphic Cauchy kernel  $S^{-1}_R(s,q)$ is written in the form I if
			$$S^{-1}_R(s,q):= (\overline s-q)\PRes_{s}(q)^{-1}.$$
			\item We say that the left slice hyperholomorphic Cauchy kernel  $S^{-1}_L(s,q)$ is written in the form II if
			$$S^{-1}_L(s,q):=(s-\overline q) \PRes_{c,s}(q)^{-1}.$$
			\item We say that the right slice hyperholomorphic Cauchy kernel  $S^{-1}_R(s,q)$ is written in the form II if
			$$S^{-1}_R(s,q):= \PRes_{c,s}(q)^{-1}(s- \overline q).$$
		\end{itemize}
	\end{definition}
	In this article, unless otherwise specified, we refer to $S^{-1}_L(s,q)$ and $S^{-1}_R(s,q)$  written in the form II.
	
	The following results will be very important in the sequel.
	\begin{lemma} Let  $s\notin [q]$.
		The left slice hyperholomorphic Cauchy kernel $S_L^{-1}(s,q)$ is left slice hyperholomorphic in $q$ and right slice hyperholomorphic in $s$. The right slice hyperholomorphic Cauchy kernel $S_R^{-1}(s,q)$ is left slice hyperholomorphic in $s$ and right slice hyperholomorphic in $q$.
	\end{lemma}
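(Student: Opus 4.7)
The plan is to exploit Theorem \ref{ts}, which lets me choose whichever of the two forms of each Cauchy kernel is best adapted to the variable in which I want to prove slice hyperholomorphicity. The key observation is that both pseudo-Cauchy kernels are polynomials with real coefficients in the variable of interest (once the other variable is fixed), so their inverses are intrinsic slice hyperholomorphic in that variable; moreover, intrinsic times left (resp.\ right) slice hyperholomorphic is itself left (resp.\ right) slice hyperholomorphic.

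More concretely, for the claim that $S_L^{-1}(s,q)$ is left slice hyperholomorphic in $q$ I would use form I, namely $S_L^{-1}(s,q)=\PRes_{s}(q)^{-1}(\overline s-q)$. Since $\PRes_{s}(q)=q^2-2\Re(s)q+|s|^2$ has real coefficients in $q$ (for $s$ fixed), restricting to any plane $\mathbb C_J$ gives a complex rational function with zeros on $[s]\cap\mathbb C_J$, hence $\PRes_s(q)^{-1}$ is well defined (since $s\notin[q]$ means $q\notin[s]$) and intrinsic slice hyperholomorphic in $q$. The factor $\overline s-q$, viewed in $q=u+Jv$, reads $(\overline s-u)-Jv$, which is of the form $\alpha(u,v)+J\beta(u,v)$ with $\alpha(u,v)=\overline s-u$, $\beta(u,v)=-v$; the even/odd conditions in \eqref{eveodd} and the Cauchy--Riemann system are immediate, so $\overline s-q$ is left slice hyperholomorphic in $q$. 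Symmetrically, for right slice hyperholomorphicity in $s$ I would use form II, $S_L^{-1}(s,q)=(s-\overline q)\PRes_{c,s}(q)^{-1}$: here $\PRes_{c,s}(q)^{-1}$ is intrinsic slice hyperholomorphic in $s$ for the same real-coefficient reason, and $s-\overline q$ is right slice hyperholomorphic in $s$ by an identical check writing $s=u+Jv$. The statements for $S_R^{-1}(s,q)$ follow the same pattern: form I for right slice hyperholomorphicity in $q$ and form II for left slice hyperholomorphicity in $s$.

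The one technical step to pin down is the product rule, that is, $f_1 f_2$ is left slice hyperholomorphic whenever $f_1$ is intrinsic and $f_2$ is left slice hyperholomorphic (and the analogous statement on the right). This is where the noncommutativity of $\mathbb H$ could cause trouble, so I would verify it explicitly: if $f_1(q)=\gamma(u,v)+J\delta(u,v)$ with $\gamma,\delta$ real-valued and $f_2(q)=\alpha(u,v)+J\beta(u,v)$ with $\alpha,\beta$ quaternion-valued, then since real scalars commute with $J$ one obtains
\[
f_1(q)f_2(q)=\bigl(\gamma\alpha-\delta\beta\bigr)+J\bigl(\gamma\beta+\delta\alpha\bigr),
\]
and the even/odd conditions together with the Cauchy--Riemann system for the new coefficients follow from those of $f_1$ and $f_2$ by a routine computation. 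Applying this to each of the four factorizations above finishes the proof.

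The main obstacle is really bookkeeping: making sure that the correct form of the kernel is used for each variable so that the factor containing the ``other'' variable becomes intrinsic, and verifying once-and-for-all the product rule so that it can be invoked cleanly in all four cases rather than redoing the $\alpha+J\beta$ decomposition by hand (which, as the direct computation of $\PRes_s(q)^{-1}(\overline s-q)$ shows, is doable but unpleasant because of the noncommutativity between $J$ and the imaginary part $\underline s$).
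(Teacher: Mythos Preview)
The paper states this lemma without proof; it is a standard fact from the literature on slice hyperholomorphic functions (see, e.g., \cite{CGKBOOK}). Your argument is correct and constitutes a clean direct verification: choosing form I of the kernel for the $q$-variable and form II for the $s$-variable makes one factor a rational function with real coefficients (hence intrinsic) in the relevant variable, and your explicit check of the product rule ``intrinsic times left (resp.\ right) slice hyperholomorphic is left (resp.\ right) slice hyperholomorphic'' is exactly what is needed to conclude.
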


	\begin{theorem}[The Cauchy formulas for slice hyperholomorphic functions]
		\label{Cauchy}
		Let $U\subset\mathbb{H}$ be a bounded slice Cauchy domain, let $J\in\mathbb{S}$ and set  $ds_J=ds (-J)$.
		If $f$ is a left slice hyperholomorphic function on a set that contains $\overline{U}$ then
		\begin{equation}
			f(q)=\frac{1}{2 \pi}\int_{\partial (U\cap \mathbb{C}_J)} S_L^{-1}(s,q)\, ds_J\,  f(s),\qquad\text{for any }\ \  q\in U.
		\end{equation}
		If $f$ is a right slice hyperholomorphic function on a set that contains $\overline{U}$,
		then
		\begin{equation}\label{Cauchyright}
			f(q)=\frac{1}{2 \pi}\int_{\partial (U\cap \mathbb{C}_J)}  f(s)\, ds_J\, S_R^{-1}(s,q),\qquad\text{for any }\ \  q\in U.
		\end{equation}
		These integrals  depend neither on $U$ nor on the imaginary unit $J\in\mathbb{S}$.
	\end{theorem}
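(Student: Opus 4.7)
My plan is to reduce the slice hyperholomorphic Cauchy formula to the classical Cauchy integral on a single complex plane $\mathbb{C}_J$, and then propagate the identity from the slice $U\cap\mathbb{C}_J$ to all of the axially symmetric set $U$ by means of the representation (``structural'') formula for slice hyperholomorphic functions. First I would verify the kernel collapse: when $s,q$ lie in the same plane $\mathbb{C}_J$, $s$ commutes with $\bar q$, hence $s^2-2\Re(q)s+|q|^2=(s-q)(s-\bar q)$ and so $S_L^{-1}(s,q)=(s-\bar q)\PRes_{c,s}(q)^{-1}=(s-q)^{-1}$. Next, the slice Cauchy--Riemann conditions of Definition \ref{hyper} imply that the restriction $f|_{U\cap\mathbb{C}_J}$ is $J$-holomorphic in the variable $u+Jv$; splitting $f|_{\mathbb{C}_J}=F_1+F_2 K$ with $K\in\mathbb{S}$, $K\perp J$ and $F_1,F_2$ classical $\mathbb{C}_J$-valued holomorphic functions, the standard Cauchy integral formula on $\mathbb{C}_J$ yields
$$f(q)=\frac{1}{2\pi J}\int_{\partial(U\cap\mathbb{C}_J)}(s-q)^{-1}\,ds\,f(s),\qquad q\in U\cap\mathbb{C}_J,$$
and moving the scalar factor $-J$ past the $\mathbb{C}_J$-valued expression $(s-q)^{-1}\,ds$ rewrites this as the desired integral with $ds_J$.

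Second, to extend the identity from $U\cap\mathbb{C}_J$ to all of $U$ I would invoke the representation formula. For $q=q_0+J_q v$ with $v=|\underline{q}|$, axial symmetry gives that $q^\pm:=q_0\pm Jv$ belong to $U\cap\mathbb{C}_J$. The decomposition $f(q)=\alpha(q_0,v)+J_q\beta(q_0,v)$ yields
$$f(q)=\tfrac12\bigl((1-J_qJ)f(q^+)+(1+J_qJ)f(q^-)\bigr).$$
Substituting the slice Cauchy formula (already proved) for $f(q^\pm)$ reduces the theorem to the algebraic identity
$$\tfrac12\bigl[(1-J_qJ)(s-q^+)^{-1}+(1+J_qJ)(s-q^-)^{-1}\bigr]=(s-\bar q)\PRes_{c,s}(q)^{-1}=S_L^{-1}(s,q).$$
Multiplying each summand on the right by the missing factor $(s-q^\mp)$ produces the common denominator $(s-q^+)(s-q^-)=s^2-2q_0s+q_0^2+v^2=\PRes_{c,s}(q)$; expanding the numerator with $a:=s-q_0\in\mathbb{C}_J$, the $(J_q a)J$ cross-terms appear with opposite signs and cancel, leaving $2(a+J_q v)=2(s-\bar q)$.

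Third, independence from the slice Cauchy domain $U$ follows from the Cauchy theorem in the slice setting applied to the integrand $s\mapsto S_L^{-1}(s,q)\,ds_J\,f(s)$: the kernel is right slice hyperholomorphic in $s$ and $f$ is left slice hyperholomorphic, so integration over any two boundary contours that are homotopic inside the common domain of analyticity yields the same value. Independence from $J\in\mathbb{S}$ is then automatic, since the left-hand side $f(q)$ does not depend on $J$ while the representation argument above produces the same integral for every choice of $J$. For right slice hyperholomorphic $f$ the argument is entirely parallel, using $S_R^{-1}(s,q)=\PRes_{c,s}(q)^{-1}(s-\bar q)$ together with the decomposition $f(q)=\alpha+\beta J_q$; orders of multiplication are reversed throughout but the cancellation in the analogue of the algebraic identity proceeds identically.

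The main obstacle is the algebraic verification in the second step: since $J_q\notin\mathbb{C}_J$ in general, $J_q$ does not commute with $a=s-q_0$, so neither of the two summands simplifies individually, and it is precisely the symmetric combination $(1-J_qJ)(\cdot)+(1+J_qJ)(\cdot)$ that produces the cancellation of the noncommuting cross-terms. A secondary, milder point is making the splitting lemma rigorous in the first step so that the classical Cauchy integral formula can be legitimately applied to the $\mathbb{H}$-valued $J$-holomorphic restriction $f|_{\mathbb{C}_J}$.
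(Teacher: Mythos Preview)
The paper does not actually prove this theorem: it is quoted in Section~2 as a preliminary result from the literature (see \cite{CGKBOOK}), so there is no in-paper proof to compare against. Your argument is correct and is essentially the standard proof found in that reference: one first establishes the formula on the slice $U\cap\mathbb{C}_J$ via the Splitting Lemma and the classical complex Cauchy formula, then extends to all of $U$ using the Representation Formula, the key computation being exactly the kernel identity you write down. The cancellation of the $J_qJa$ cross-terms is the heart of that step, and you have it right; the independence claims follow as you describe.
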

	Moreover, for slice hyperholomoprhic functions hold a version of the Cauchy integral theorem
	\begin{theorem}[Cauchy integral Theorem]\label{CIT}
		Let $U\subset \hh$ be open, let $J\in\mathbb S$, and let $f\in SH_L(U)$ and $g\in SH_R(U)$. Moreover, let $D_J\subset U\cap\cc_J$ be an open and bounded subset of the complex plane $\cc_J$ with $\overline D_J\subset U\cap\cc_J$ such that $\partial D_J$ is a finite union of piecewise continuously differentiable Jordan curves. Then
		$$\int_{\partial D_J} g(s)ds_J f(s)=0,$$
		where $ds_J=ds(-J)$.
	\end{theorem}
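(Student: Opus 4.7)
The plan is to apply Stokes' theorem to the $\mathbb H$-valued 1-form $\omega(s):=g(s)\,ds_J\,f(s)$ on the planar region $D_J\subset \cc_J$. Since $\overline D_J\subset U\cap\cc_J$, both $f$ and $g$ are real differentiable in a neighbourhood of $\overline D_J$, and the hypothesis on $\partial D_J$ (finite union of piecewise $C^1$ Jordan curves) allows Stokes' theorem to be applied componentwise in the four real coordinates of $\mathbb H$. It therefore suffices to prove that $d\omega=0$ on $D_J$.

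First I would parametrise $\cc_J$ by $s=u+Jv$. Then $ds=du+J\,dv$, whence
$$ds_J=ds\,(-J)=dv-J\,du.$$
Writing $f(u+Jv)=\alpha(u,v)+J\beta(u,v)$ and $g(u+Jv)=\widetilde\alpha(u,v)+\widetilde\beta(u,v)J$ as in Definition \ref{hyper}, the Cauchy--Riemann system $\partial_u\alpha=\partial_v\beta$, $\partial_v\alpha=-\partial_u\beta$ (and the analogous pair for $\widetilde\alpha,\widetilde\beta$) is equivalent, on $U\cap\cc_J$, to the two compact operator identities
$$\partial_u f+J\,\partial_v f=0\qquad\text{and}\qquad \partial_u g+\partial_v g\cdot J=0,$$
expressing that $f|_{U\cap\cc_J}$ is left-holomorphic and $g|_{U\cap\cc_J}$ is right-holomorphic in the $\cc_J$-variable.

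Next I would compute $d\omega$ directly in the $(u,v)$ coordinates. Writing $\omega=g\eta_u\,du+g\eta_v\,dv$ with constant $\eta_u=-J$, $\eta_v=1$, and preserving the left/right order of every factor, a direct application of Leibniz (noting $d(ds_J)=0$) gives
$$d\omega=\bigl[(\partial_u g+\partial_v g\cdot J)\,f+g\,(\partial_u f+J\,\partial_v f)\bigr]\,du\wedge dv.$$
By the two identities established above, both summands vanish pointwise on $D_J$, so $d\omega\equiv 0$. Stokes' theorem then yields
$$\int_{\partial D_J}g(s)\,ds_J\,f(s)=\int_{D_J}d\omega=0,$$
which is the claim.

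The only real obstacle is the bookkeeping of the non-commutative products in $\mathbb H$: $J$ must be kept to the left of $\partial_v f$, so that the \emph{left} Cauchy--Riemann operator acts on $f$, and to the right of $\partial_v g$, so that the \emph{right} Cauchy--Riemann operator acts on $g$. The insertion of $ds_J$ between $g$ and $f$ in $\omega$ is precisely what forces these two placements to appear simultaneously when one expands $d\omega$. Once the order of multiplication is tracked, the argument is a verbatim transcription of the complex Cauchy integral theorem to each slice $\cc_J$, and the independence of the integral from $J$ and from the slice geometry is already built into the hypothesis $f\in SH_L(U)$, $g\in SH_R(U)$.
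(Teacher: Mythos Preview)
Your argument is correct. The paper itself does not prove this statement: Theorem~\ref{CIT} appears in the preliminaries section as a known result quoted from the literature (see \cite{CGKBOOK,MR2752913}), with no proof given. Your Stokes--theorem approach is the standard one, and you have handled the only delicate point---tracking the position of $J$ so that the \emph{left} Cauchy--Riemann operator hits $f$ and the \emph{right} one hits $g$---correctly. The computation $d\omega=[(\partial_u g+\partial_v g\,J)f+g(\partial_u f+J\,\partial_v f)]\,du\wedge dv$ is accurate, and both bracketed terms vanish by the respective Cauchy--Riemann systems, so the proof is complete.

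One small cosmetic remark: when you write ``$\omega=g\eta_u\,du+g\eta_v\,dv$'' you have dropped the factor $f$ on the right; the intended expression is $\omega=g(-J)f\,du+gf\,dv$, which is what you actually differentiate two lines later. It would be cleaner to write this out explicitly.
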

	Now, we recall what happens when we apply the second Fueter operator $T_{F2}:= \Delta$, where $ \Delta= \sum_{i=0}^3 \partial_{q_i}^2$, to the slice hyperholomorphic Cauchy kernel.
	
	\begin{proposition}\label{Laplacian}
		Let $q$, $s \in \mathbb{H}$ and $q \not\in [s]$. Then:
		\begin{itemize}
			\item
			The function $\Delta S_L^{-1}(s,q)$ is a left  Fueter regular function in the variable $q$ and right slice hyperholomorphic in $s$.
			\item
			The function  $\Delta S_R^{-1}(s,q)$ is a right Fueter regular function in the variable $q$ and left slice hyperholomorphic in $s$.
		\end{itemize}
	\end{proposition}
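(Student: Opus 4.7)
My plan is to treat the two assertions (Fueter regularity in $q$ and slice hyperholomorphicity in $s$) separately, and to carry out the argument in detail for $S_L^{-1}$; the case of $S_R^{-1}$ is completely symmetric, with left and right exchanged.

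For left Fueter regularity in $q$: the kernel $S_L^{-1}(s,q)$ is left slice hyperholomorphic in $q$ on $\mathbb{H}\setminus[s]$. Theorem \ref{F1} directly applies to \emph{intrinsic} slice hyperholomorphic functions, producing Fueter regular outputs. To transfer this to the non-intrinsic kernel, I would exploit the expansion of Theorem \ref{ts}, namely $S_L^{-1}(s,q)=\sum_{n\ge 0} q^n s^{-n-1}$ for $|q|<|s|$. Each monomial $q^n$ is intrinsic, so $\Delta q^n$ is left Fueter regular by Theorem \ref{F1}. Since the Fueter operator satisfies $\mathcal{D}(g a)=(\mathcal{D} g)a$ for every constant $a\in\mathbb{H}$, it follows that $(\Delta q^n) s^{-n-1}$ is left Fueter regular as well, and hence so is the sum of the series on its domain of convergence. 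The identity $\mathcal{D}\,\Delta S_L^{-1}(s,q)=0$ obtained for $|q|<|s|$ then extends to all $q\notin[s]$ by real-analytic continuation.

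For right slice hyperholomorphicity in $s$: by the lemma immediately preceding the proposition, $s\mapsto S_L^{-1}(s,q)$ is right slice hyperholomorphic. Since $\Delta$ involves only partial derivatives in $q$, it commutes with every $s$-derivative and with any $s$-dependent scalar factor entering the slice structure in $s$. Consequently $\Delta S_L^{-1}(s,q)$ inherits right slice hyperholomorphicity in $s$.

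The most delicate point is the extension of the Fueter theorem to non-intrinsic left slice hyperholomorphic functions. The series argument above is transparent but relies on the convergence region $|q|<|s|$. A fully computational alternative begins with the identity $\Delta \mathcal{Q}_{c,s}(q)^{-1}=0$ (quick because $\mathcal{Q}_{c,s}(q)^{-1}\in\mathbb{C}_{J_s}$ depends on $q$ only through $q_0$ and $|q|^2$, so one may work in a commutative subalgebra), combined with the Leibniz rule for $\Delta$ and the harmonicity of $s-\overline q$, to obtain $\Delta S_L^{-1}(s,q)=-4(s-\overline q)\,\mathcal{Q}_{c,s}(q)^{-2}$, and then verifies $\mathcal{D}\,\Delta S_L^{-1}(s,q)=0$ directly using the identity $(s-\overline q)(s-q_0)-\underline q(s-\overline q)=s^2-2q_0 s+|q|^2=\mathcal{Q}_{c,s}(q)$.
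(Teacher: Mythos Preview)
The paper does not actually prove this proposition: it is stated in Section~2 as a preliminary fact, with the explicit formula for $\Delta S_L^{-1}(s,q)$ deferred to \cite[Thm.~2.2.2]{CGKBOOK} (recorded here as Theorem~\ref{t2}). So there is no in-paper argument to compare against directly.

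Your proposal is correct on both fronts. The series argument is clean: the identity $\mathcal{D}(g a)=(\mathcal{D}g)a$ for right quaternionic constants $a$ is precisely what transfers the Fueter theorem from intrinsic functions to arbitrary left slice hyperholomorphic ones via the expansion $\sum_{n\ge 0} q^n s^{-n-1}$, and the extension from $\{|q|<|s|\}$ to all of $\mathbb{H}\setminus[s]$ by real-analytic continuation is legitimate because $[s]$ has real codimension at least~$2$, so the complement is connected. Your reasoning for right slice hyperholomorphicity in $s$ (that $\Delta_q$ commutes with the slice structure in $s$) is also sound; an even shorter route, once you have the closed form $-4(s-\overline q)\,\mathcal{Q}_{c,s}(q)^{-2}$, is to note that this is a right slice hyperholomorphic factor times an intrinsic one.

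Your computational alternative is valid as well and, incidentally, gives an independent derivation of Theorem~\ref{t2}: the harmonicity of $\mathcal{Q}_{c,s}(q)^{-1}$ follows from the identity $\sum_i(\partial_{q_i}\mathcal{Q}_{c,s}(q))^2=4\,\mathcal{Q}_{c,s}(q)$, and the Leibniz rule for $\Delta$ (which holds in the noncommutative setting provided one preserves the order of factors) then yields the formula. The algebraic identity $(s-\overline q)(s-q_0)-\underline q\,(s-\overline q)=\mathcal{Q}_{c,s}(q)$ is exactly the cancellation needed to conclude $\mathcal{D}F_L(s,q)=0$, and it checks out.
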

	In \cite[Thm. 2.2.2]{CGKBOOK} there are the explicit computations of the functions
	$$ (s,q) \mapsto \Delta S_{L}^{-1}(s,q), \qquad (s,q) \mapsto \Delta S_{R}^{-1}(s,q).$$
	
	\begin{theorem}\label{t2}
		Let $q,s\in\mathbb H$ with $q\notin [s]$. Then we have
		$$\Delta S^{-1}_L(s,q)=-4(s-\overline q)(s^2-2\Re(q)s+|q|^2)^{-2}$$
		and
		$$\Delta S^{-1}_R(s,q)=-4(s^2-2\Re(q)s+|q|^2)^{-2}(s-\overline q).$$
	\end{theorem}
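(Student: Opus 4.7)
I propose to prove both identities by a direct Laplacian computation starting from form II of the left Cauchy kernel
$$S_L^{-1}(s,q)=(s-\overline q)\,\mathcal Q_{c,s}(q)^{-1},\qquad q\notin[s],$$
and its right counterpart. Writing $f(q)=s-\overline q$ and $g(q)=\mathcal Q_{c,s}(q)^{-1}$, so that $S_L^{-1}=fg$ and $S_R^{-1}=gf$, the tool will be the order-preserving Leibniz rule
$$\Delta(fg)=(\Delta f)\,g+2\sum_{\ell=0}^{3}(\partial_{q_\ell}f)(\partial_{q_\ell}g)+f\,(\Delta g),$$
which holds for quaternion-valued $f,g$ because it follows by iterating the ordinary product rule in a prescribed order. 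Since $f$ is $\mathbb H$-affine in $q$, $\Delta f=0$, so the content is to show $\Delta g=0$ and to identify the cross term.

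\textbf{Key commutativity.} The whole argument rests on one algebraic observation: $\mathcal Q_{c,s}(q)=s^{2}-2\Re(q)s+|q|^{2}$ is a polynomial in $s$ whose coefficients are real functions of $q$, so $\mathcal Q_{c,s}(q)$, and hence $g$, commutes with $s$ and with every real number. In particular each partial derivative
$$\partial_{q_0}\mathcal Q_{c,s}(q)=2(q_0-s),\qquad \partial_{q_\ell}\mathcal Q_{c,s}(q)=2q_\ell\quad(\ell=1,2,3)$$
lies in the centralizer of $g$, and the chain rule delivers the unambiguous formulas $\partial_{q_0}g=2(s-q_0)g^{2}$ and $\partial_{q_\ell}g=-2q_\ell g^{2}$ for $\ell=1,2,3$.

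\textbf{Computation.} A second differentiation, again freely commuting $g$ with $s$ and with real scalars, gives
$$\partial_{q_0}^{2}g=-2g^{2}+8(s-q_0)^{2}g^{3},\qquad \partial_{q_\ell}^{2}g=-2g^{2}+8q_\ell^{2}g^{3}\quad(\ell=1,2,3),$$
and summing,
$$\Delta g=-8g^{2}+8\bigl[(s-q_0)^{2}+q_1^{2}+q_2^{2}+q_3^{2}\bigr]g^{3}=-8g^{2}+8\,\mathcal Q_{c,s}(q)\,g^{3}=0.$$
Since $\partial_{q_0}f=-1$ and $\partial_{q_\ell}f=e_\ell$ for $\ell=1,2,3$, the cross term for $fg$ collapses to
$$\sum_{\ell=0}^{3}(\partial_{q_\ell}f)(\partial_{q_\ell}g)=-2(s-q_0)g^{2}-2\sum_{\ell=1}^{3}q_\ell e_\ell\,g^{2}=-2\bigl(s-q_0+\underline q\bigr)g^{2}=-2(s-\overline q)g^{2},$$
using that $g^{2}$ commutes with each real $q_\ell$. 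Plugging into the Leibniz identity yields the first claim $\Delta S_L^{-1}(s,q)=-4(s-\overline q)\mathcal Q_{c,s}(q)^{-2}$. The right-kernel identity follows by the same scheme applied to $gf$: the $\ell=1,2,3$ pieces now read $-2q_\ell g^{2}e_\ell$, and since $g^{2}$ commutes with the real $q_\ell$ they regroup as $-2g^{2}\underline q$; combined with the $\ell=0$ term they give $-2g^{2}(s-\overline q)$, whence the second claim.

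\textbf{Main obstacle.} The only genuine difficulty is bookkeeping in the non-commutative algebra $\mathbb H$: every product involving $g$, $s$, the units $e_\ell$, and real scalars $q_\ell$ must be written in the correct order. This is entirely tamed by the single observation that $\mathcal Q_{c,s}(q)$, being a polynomial in $s$ with real coefficients, commutes with $s$ and with all of its own partial derivatives — what would otherwise be a messy non-commutative expansion becomes an essentially scalar-like computation.
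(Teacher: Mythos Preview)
Your proof is correct. The Leibniz expansion of $\Delta(fg)$ with $f=s-\overline q$ and $g=\mathcal Q_{c,s}(q)^{-1}$ is valid in the stated order, your computation of $\Delta g=0$ is clean and rests exactly on the observation you highlight (that $\mathcal Q_{c,s}(q)$ is a real-coefficient polynomial in $s$, hence commutes with $s$, with itself, and with all its partial derivatives), and the cross terms assemble to $-2(s-\overline q)g^{2}$ and $-2g^{2}(s-\overline q)$ as you claim. The one place that deserves a word of care is the right-kernel cross term: there you also use that $(s-q_0)g^{2}=g^{2}(s-q_0)$, which is true because $g$ commutes with $s$ and with the real scalar $q_0$; you invoke this implicitly, and it would not hurt to say it.

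As for comparison: the paper does not actually prove this theorem in the text; it is quoted from \cite[Thm.~2.2.2]{CGKBOOK} as a known explicit computation. Your argument is therefore a self-contained alternative. It is also worth noting that the paper later (Theorem~\ref{t1}) computes $\mathcal D S_L^{-1}(s,q)=-2\mathcal Q_{c,s}(q)^{-1}$ by a direct first-order calculation and then appeals to Theorem~\ref{t3}; one could equally reach the present result by applying $\overline{\mathcal D}$ to that identity. Your Leibniz-plus-harmonicity route is arguably the most transparent, since it isolates the single algebraic fact (commutativity of $\mathcal Q_{c,s}(q)$ with $s$) that makes form~II the right starting point.
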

	We recall the definition of the $F$-kernels.
	\begin{definition}
\label{FK}
		Let $q$, $s \in \mathbb{H}$. We define for $ s \notin [q]$, the left $F$-kernel as
		$$F_{L}(s,q):= \Delta S^{-1}_L(s,q)=-4(s-\overline q)\PRes_{c,s}(q)^{-2},$$
		and the right $F$-kernel as
		$$F_R(s,q):=\Delta S^{-1}_R(s,q)=-4\PRes_{c,s}(q)^{-2} (s-\overline q).$$
	\end{definition}
	We recall the following relation between the $F$-kernel and the commutative pseudo Cauchy kernel
$\PRes_{c,s}(q)^{-1}$.
	\begin{theorem}\label{t3}
		Let $s,\, q\in\mathbb H$ be such that $q\notin [s]$, then
		$$ F_L(s,q)s-qF_L(s,q)=-4\PRes_{c,s}(q)^{-1} $$
		and
		$$ sF_R(s,q)-F_R(s,q)q=-4\PRes_{c,s}(q)^{-1} $$
	\end{theorem}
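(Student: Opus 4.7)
The plan is to reduce both identities to a one-line algebraic simplification, using the fact that the commutative pseudo Cauchy kernel $\PRes_{c,s}(q)=s^2-2\Re(q)s+|q|^2$ is a polynomial in $s$ with \emph{real} coefficients (namely $1$, $-2\Re(q)$, $|q|^2$). Consequently $s$ commutes with $\PRes_{c,s}(q)$, and hence with $\PRes_{c,s}(q)^{-1}$ and $\PRes_{c,s}(q)^{-2}$. This is the one non-obvious ingredient; everything else is a direct computation.

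For the left identity, I would start from the definition $F_L(s,q)=-4(s-\overline q)\PRes_{c,s}(q)^{-2}$ and use the commutativity just noted to write
\begin{equation*}
F_L(s,q)\,s=-4(s-\overline q)\PRes_{c,s}(q)^{-2}s=-4(s-\overline q)s\,\PRes_{c,s}(q)^{-2},
\end{equation*}
so that
\begin{equation*}
F_L(s,q)\,s-q\,F_L(s,q)=-4\bigl[(s-\overline q)s-q(s-\overline q)\bigr]\PRes_{c,s}(q)^{-2}.
\end{equation*}
Expanding the bracket gives $s^2-\overline q\, s-qs+q\overline q=s^2-2\Re(q)s+|q|^2=\PRes_{c,s}(q)$, since $q+\overline q=2\Re(q)$ and $q\overline q=|q|^2$. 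One factor of $\PRes_{c,s}(q)$ then cancels to produce the stated right-hand side $-4\PRes_{c,s}(q)^{-1}$.

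For the right identity, the argument is symmetric: from $F_R(s,q)=-4\PRes_{c,s}(q)^{-2}(s-\overline q)$, again pushing $s$ through $\PRes_{c,s}(q)^{-2}$ on the left gives
\begin{equation*}
s\,F_R(s,q)-F_R(s,q)\,q=-4\,\PRes_{c,s}(q)^{-2}\bigl[s(s-\overline q)-(s-\overline q)q\bigr],
\end{equation*}
and the bracket equals $s^2-s\overline q-sq+\overline q\, q=\PRes_{c,s}(q)$, yielding the claim.

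The only potential subtlety is the commutation between $s$ and $\PRes_{c,s}(q)^{-2}$, so I would state this as a short observation (or a one-line lemma) before the computation, noting explicitly that it relies on $\Re(q)$ and $|q|^2$ being real quaternions. Once this is in place, there is no real obstacle; the rest is elementary manipulation of quaternion conjugation identities.
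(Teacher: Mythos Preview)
Your argument is correct. The key observation that $\PRes_{c,s}(q)=s^2-2\Re(q)s+|q|^2$ has real coefficients, so that $s$ commutes with $\PRes_{c,s}(q)^{-2}$, is exactly what makes the factorisation go through; after that the identities $q+\overline q=2\Re(q)$ and $q\overline q=\overline q\,q=|q|^2$ collapse the bracket to $\PRes_{c,s}(q)$ as you say.

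Note that the paper does not actually supply a proof of this theorem: it is stated in the preliminary section as a known relation between the $F$-kernels and the commutative pseudo Cauchy kernel, recalled from the existing literature on the $F$-functional calculus. So there is no ``paper's own proof'' to compare against; your direct computation is the standard way to verify the identity and would serve perfectly well as the omitted proof.
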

	The following result plays a key role, see \cite[Thm. 2.2.6]{CGKBOOK}.
	\begin{theorem}[The Fueter mapping theorem in integral form]
		\label{Fueter}
		Let $U\subset\mathbb{H}$ be a slice Cauchy domain, let $J\in\mathbb{S}$ and set  $ds_J=ds (-J)$.
		\begin{itemize}
			\item
			If $f$ is a left slice hyperholomorphic function on a set $W$, such that $\overline{U} \subset W$, then
			the left Fueter regular function  $\breve{f}(q)=\Delta f(q)$
			admits the integral representation
			\begin{equation}\label{FuetLSEC}
				\breve{f}(q)=\frac{1}{2 \pi}\int_{\partial (U\cap \mathbb{C}_J)} F_L(s,q)ds_J f(s).
			\end{equation}
			\item
			If $f$ is a right slice hyperholomorphic function on a set $W$, such that $\overline{U} \subset W$, then
			the right Fueter regular function $\breve{f}(q)=\Delta f(q)$
			admits the integral representation
			\begin{equation}\label{FuetRSCE}
				\breve{f}(q)=\frac{1}{2 \pi}\int_{\partial (U\cap \mathbb{C}_J)} f(s)ds_J F_R(s,q).
			\end{equation}
		\end{itemize}
		The integrals  depend neither on $U$ and nor on the imaginary unit $J\in\mathbb{S}$.
	\end{theorem}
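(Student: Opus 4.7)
The plan is to differentiate the slice hyperholomorphic Cauchy formula under the integral sign. Starting from Theorem \ref{Cauchy}, for any $q\in U$ we have
$$
f(q)=\frac{1}{2\pi}\int_{\partial(U\cap\mathbb{C}_J)} S_L^{-1}(s,q)\,ds_J\,f(s).
$$
Since $q$ lies in the open set $U$ and $s$ ranges on the boundary $\partial(U\cap\mathbb{C}_J)$, the kernel $(s,q)\mapsto S_L^{-1}(s,q)$ is real-differentiable in $q$ to any order on a neighborhood of $\{q\}\times\partial(U\cap\mathbb{C}_J)$, with derivatives bounded uniformly on compact subsets. This legitimizes interchanging $\Delta=\partial_{q_0}^2+\partial_{q_1}^2+\partial_{q_2}^2+\partial_{q_3}^2$ with the contour integral in $s$, yielding
$$
\breve{f}(q)=\Delta f(q)=\frac{1}{2\pi}\int_{\partial(U\cap\mathbb{C}_J)} \bigl(\Delta_q S_L^{-1}(s,q)\bigr)ds_J f(s).
$$
By Definition \ref{FK}, $\Delta_q S_L^{-1}(s,q)=F_L(s,q)$, giving the announced integral representation \eqref{FuetLSEC}. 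The analogous argument starting from \eqref{Cauchyright} and using $\Delta_q S_R^{-1}(s,q)=F_R(s,q)$ yields \eqref{FuetRSCE}; the fact that $\breve{f}$ is left (resp. right) Fueter regular follows from Proposition \ref{Laplacian}, since integration in $s$ commutes with $\mathcal{D}$ acting in $q$, and $\mathcal{D}F_L(\cdot,q)\equiv 0$ (resp.\ $F_R(\cdot,q)\mathcal{D}\equiv 0$) away from $[s]$.

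For the independence of the integral from the domain $U$ and the imaginary unit $J$, I would invoke Theorem \ref{CIT}. By Proposition \ref{Laplacian}, the kernel $s\mapsto F_L(s,q)$ is right slice hyperholomorphic on $\mathbb{H}\setminus[q]$, while $f$ is left slice hyperholomorphic on $W$. Given two slice Cauchy domains $U_1,U_2$ with $[q]\cap\overline{U_i}=\emptyset$ covered by assumption after isolating $q$, and both containing $\overline{U}\setminus(\text{small ball around }[q])$ in an appropriate sense, one applies Cauchy's integral theorem to the difference of the two contours in $\mathbb{C}_J$, where the integrand $F_L(s,q)ds_J f(s)$ is of the right form to vanish. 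A standard axially-symmetric deformation argument (as used to prove independence of the slice Cauchy formula from $J$) then yields the $J$-independence as well.

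The main obstacle is really just the bookkeeping for the Fubini-type interchange, which in the slice setting is textbook since the contour is compact and the kernel is analytic in $q$ away from $[s]$; the interesting observation is that the explicit form of $F_L(s,q)=-4(s-\bar q)\PRes_{c,s}(q)^{-2}$ given in Theorem \ref{t2} is not needed for the proof, only for the qualitative statements on left/right Fueter regularity supplied by Proposition \ref{Laplacian}.
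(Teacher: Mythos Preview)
Your main argument is correct and is exactly the standard approach: start from the slice Cauchy formula of Theorem~\ref{Cauchy}, apply $\Delta$ in $q$, pass it under the integral sign by smoothness and compactness of the contour, and identify $\Delta_q S_L^{-1}(s,q)=F_L(s,q)$ via Definition~\ref{FK}. Note that in this paper Theorem~\ref{Fueter} is a cited preliminary result (from \cite[Thm.~2.2.6]{CGKBOOK}) and is not proved here; however, the paper does prove the analogous statement for $\mathcal{D}$ in place of $\Delta$ (Theorem~\ref{qthe}), and that proof follows precisely your scheme.

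Your treatment of the independence from $U$ and $J$ is overcomplicated and a bit muddled. There is no need to ``isolate $q$'' or excise a ball around $[q]$: for $q\in U$ and $s\in\partial(U\cap\mathbb{C}_J)$ one always has $q\notin[s]$, so the integrand has no singularity on the contour. The cleanest argument---and the one the paper uses in the proof of Theorem~\ref{qthe}---is simply that the left-hand side $\breve f(q)=\Delta f(q)$ manifestly does not depend on $U$ or $J$, and you have just shown the integral equals it; hence the integral inherits the independence directly from the Cauchy formula. Your contour-deformation argument via Theorem~\ref{CIT} and Proposition~\ref{Laplacian} is also valid, but it is extra work.
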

	
	\subsection{The $S$-functional calculus}
	We now recall some basic facts of the $S$-function calculus, see \cite{CGKBOOK, MR2752913} for more details. Let $X$ be a two sided quaternionic Banach module of the form $X= X_{\mathbb{R}} \otimes \mathbb{H}$, where $X_{\mathbb{R}}$ is a real Banach space. In this paper we consider $\mathcal{B}(X)$ the Banach space of all bounded right linear operators acting on $X$.
	\\In the sequel we will consider bounded operators of the form $T=T_0+T_1e_1+T_2e_2+T_3e_3$, with commuting components $T_{i}$ acting on a real vector space $X_{\mathbb{R}}$, i.e., $T_i \in \mathcal{B}(X_{\mathbb{R}})$ for $i=0,1,2,3$. The subset of $ \mathcal{B}(X)$ given by the operators $T$ with commuting components $T_i$ will be denoted by $ \mathcal{BC}(X)$.

	Now let $T:X \to X$ be a right (or left) linear operator. We give the following.
	\begin{definition}
		Let $T\in \mathcal B(X)$. For $s\in\mathbb H$ we set
		$$ \mathcal Q_s(T):=T^2-2\Re(s)T+|s|^2\mathcal I. $$
		We define the $S$-resolvent set $\rho_S(T)$ of $T$ as
		$$\rho_S(T):=\{s\in\mathbb H: \, \mathcal Q_s(T)^{-1} \in \mathcal B(X)\},$$
		and we define the $S$-spectrum $\sigma_S(T)$ of $T$ as
		$$ \sigma_S(T):=\mathbb H\setminus \rho_S(T). $$
	\end{definition}
	For $s\in\rho_S(T)$, the operator $\mathcal Q_s(T)^{-1}$ is called the pseudo $S$-resolvent operator of $T$ at $s$.

	\begin{theorem}
		Let $T\in\mathcal B(X)$ and $s\in\mathbb H$ with $\|T\|< |s|$. Then we have
		$$ \sum_{n=0}^{\infty}T^n s^{-n-1}=-\mathcal{Q}_s(T)^{-1}(T-\overline s\mathcal I),$$
and
		$$ \sum_{n=0}^{\infty}s^{-n-1}T^n=-(T-\overline s\mathcal I)\mathcal{Q}_s(T)^{-1}. $$
	\end{theorem}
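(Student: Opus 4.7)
The plan is to prove both identities as the operator-level lifts of the scalar identities in Theorem \ref{ts}, via a Neumann-type telescoping argument. First I will establish absolute convergence of both series in $\mathcal B(X)$: since $\|T\|<|s|$, the estimate $\|T^n s^{-n-1}\|\le |s|^{-1}(\|T\|/|s|)^n$ is geometric and summable, and the same bound holds for $\|s^{-n-1}T^n\|$. Call the limits $U$ and $V$, respectively. The core step is then to identify $U$ and $V$ by multiplying by $\mathcal Q_s(T)$ from the appropriate side and telescoping.

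For the partial sum $S_N=\sum_{n=0}^N T^n s^{-n-1}$ I will compute $\mathcal Q_s(T)S_N$ directly. Since $2\,\Re(s)=s+\overline s$ and $|s|^2=s\overline s$ are real they commute with $T$, and $\overline s$ commutes with every integer power of $s$; expanding each summand yields
\[
\mathcal Q_s(T)\,T^n s^{-n-1}=T^{n+2}s^{-n-1}-T^{n+1}s^{-n}-T^{n+1}s^{-n-1}\overline s+T^n s^{-n}\overline s.
\]
Summed over $n$, the first two contributions telescope to $T^{N+2}s^{-N-1}-T$, while the last two telescope to $(\mathcal I-T^{N+1}s^{-N-1})\overline s$. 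The remainders $T^{N+2}s^{-N-1}$ and $T^{N+1}s^{-N-1}\overline s$ vanish in operator norm as $N\to\infty$ by the same geometric estimate, so $\mathcal Q_s(T)\,U=\overline s\,\mathcal I-T$. A symmetric computation for $V_N\,\mathcal Q_s(T)$, expanding on the right rather than on the left, gives $V\,\mathcal Q_s(T)=\overline s\,\mathcal I-T$.

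To convert these into the stated formulas I still need $\mathcal Q_s(T)^{-1}\in\mathcal B(X)$. Here I will observe that $\overline s\,\mathcal I-T$ is already invertible via the elementary Neumann series $(\overline s-T)^{-1}=\sum_{n\ge 0}\overline s^{-n-1}T^n$, which converges since $\|T\|<|\overline s|=|s|$. Right-multiplying $\mathcal Q_s(T)U=\overline s-T$ by $(\overline s-T)^{-1}$ then exhibits $U(\overline s-T)^{-1}$ as a right inverse of $\mathcal Q_s(T)$, and left-multiplying $V\,\mathcal Q_s(T)=\overline s-T$ by $(\overline s-T)^{-1}$ exhibits $(\overline s-T)^{-1}V$ as a left inverse. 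Since in a unital Banach algebra an element admitting both a left and a right inverse is invertible, $\mathcal Q_s(T)\in\mathcal B(X)$ is invertible, and applying $\mathcal Q_s(T)^{-1}$ on the left of the two identities above produces precisely the claimed formulas. The main care point (rather than a genuine obstacle) is the bookkeeping in the telescoping: one must consistently track on which side of $T^k$ each factor $s$ or $\overline s$ sits, and rely on the commutation of real scalars with $T$ and of $\overline s$ with integer powers of $s$, which are the only commutations available.
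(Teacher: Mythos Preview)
The paper states this theorem without proof, as it is standard background from the $S$-functional calculus (cf.\ \cite{CGKBOOK}). Your telescoping argument is the natural route and is correct in outline, but one step fails as written.

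The claim $(\overline s\,\mathcal I-T)^{-1}=\sum_{n\ge 0}\overline s^{\,-n-1}T^n$ tacitly assumes that $\overline s$ commutes with $T$: to telescope $(\overline s\,\mathcal I-T)\sum_{n\ge 0}\overline s^{\,-n-1}T^n$ one needs $T\,\overline s^{\,-n-1}=\overline s^{\,-n-1}T$, which is false for a general right-linear $T\in\mathcal B(X)$. This noncommutativity is precisely why the $S$-spectrum replaces the naive resolvent in the quaternionic setting, so the very formula you invoke is unavailable here.

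The repair is immediate and leaves the rest of your argument intact. You only need \emph{invertibility} of $\overline s\,\mathcal I-T$, not that particular series. In the real Banach algebra $\mathcal B(X)$ the operator $\overline s\,\mathcal I$ is invertible with $\|(\overline s\,\mathcal I)^{-1}\|=|s|^{-1}$, so
\[
\overline s\,\mathcal I-T=(\overline s\,\mathcal I)\bigl(\mathcal I-(\overline s\,\mathcal I)^{-1}T\bigr),\qquad \bigl\|(\overline s\,\mathcal I)^{-1}T\bigr\|\le |s|^{-1}\|T\|<1,
\]
and the standard Neumann series in $\mathcal B(X)$ gives the inverse $\sum_{n\ge 0}\bigl((\overline s^{-1}\mathcal I)T\bigr)^n(\overline s^{-1}\mathcal I)$, with the factors interleaved rather than separated. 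Your construction of left and right inverses for $\mathcal Q_s(T)$ then goes through unchanged. One cosmetic slip: to extract $V$ from $V\,\mathcal Q_s(T)=\overline s\,\mathcal I-T$ you must multiply by $\mathcal Q_s(T)^{-1}$ on the right, not on the left.
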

	According to the left or right slice hyperholomorphicity, there exist two different resolvent operators.
	\begin{definition}[$S$-resolvent operators]

	Let $T \in \mathcal{B}(X)$ and $s\in\mathbb \rho_{S}(T)$. Then the left $S$-resolvent operator is defined as
		$$ S^{-1}_L(s,T):=-\mathcal{Q}_s(T)^{-1}(T-\overline s\mathcal I), $$
		and the right $S$-resolvent operator is defined as
		$$ S^{-1}_R(s,T):=-(T-\overline s\mathcal I)\mathcal{Q}_s(T)^{-1}. $$
	\end{definition}
	
The so-called $S$-resolvent equation, see \cite[Thm. 3.8]{ACGS}, involves both $S$-resolvent operators and the Cauchy kernel of slice hyperholomorphic functions and is recalled in the next result:
\begin{theorem}[$S$-resolvent equation]
Let $T \in \mathcal{B}(X)$ then for $s$, $p \in \rho_S(T)$, with  $q \not\in [s]$, we have
	\begin{equation}
		\label{ress}
		S_{R}^{-1}(s,T)S_{L}^{-1}(p,T)=[\left(S_{R}^{-1}(s,T)-S_{L}^{-1}(p,T)\right)p-\bar{s}\left(S_{R}^{-1}(s,T)-S_{L}^{-1}(p,T)\right)] \mathcal{Q}_{s}(p)^{-1},
	\end{equation}
	where $\mathcal{Q}_{s}(p)=p^2-2\Re(s)p+|s|^2$.
	\end{theorem}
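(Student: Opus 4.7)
The plan is to verify the identity by a direct algebraic manipulation, reducing everything to a polynomial operator identity. Since $p\notin[s]$, the quaternion $\mathcal Q_s(p)=p^2-2\Re(s)p+|s|^2$ is nonzero (this is the scalar analogue of the invertibility of $\mathcal Q_s(T)$) and hence invertible in $\mathbb H$. So multiplying both sides of the claimed equation on the right by $\mathcal Q_s(p)$ reduces the problem to proving
\begin{equation*}
S_R^{-1}(s,T)\,S_L^{-1}(p,T)\,\mathcal Q_s(p) = \bigl(S_R^{-1}(s,T)-S_L^{-1}(p,T)\bigr)p-\bar s\bigl(S_R^{-1}(s,T)-S_L^{-1}(p,T)\bigr).
\end{equation*}

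Next I substitute the explicit forms $S_R^{-1}(s,T)=-(T-\bar s\mathcal I)\mathcal Q_s(T)^{-1}$ and $S_L^{-1}(p,T)=-\mathcal Q_p(T)^{-1}(T-\bar p\mathcal I)$. The key structural fact is that $\mathcal Q_s(T)$ and $\mathcal Q_p(T)$ are \emph{real}-coefficient polynomials in the single operator $T$, so they commute with each other and with their inverses; therefore I can regroup the product on the left-hand side as $(T-\bar s\mathcal I)[\mathcal Q_s(T)\mathcal Q_p(T)]^{-1}(T-\bar p\mathcal I)\,\mathcal Q_s(p)$. Multiplying the whole equation on the left by $\mathcal Q_s(T)\mathcal Q_p(T)$ clears all operator inverses and turns the claim into the operator-polynomial identity
\begin{equation*}
(T-\bar s\mathcal I)(T-\bar p\mathcal I)\,\mathcal Q_s(p) = \bigl[(T-\bar s\mathcal I)\mathcal Q_p(T)-\mathcal Q_s(T)(T-\bar p\mathcal I)\bigr]p-\bar s\bigl[(T-\bar s\mathcal I)\mathcal Q_p(T)-\mathcal Q_s(T)(T-\bar p\mathcal I)\bigr],
\end{equation*}
which is to be verified by expanding $\mathcal Q_s(p)$, $\mathcal Q_s(T)$, $\mathcal Q_p(T)$ as quadratics and matching terms grouped by their power of $T$ on the left, their power of $p$ on the right, and the quaternionic coefficients $s,\bar s,p,\bar p$ in between.

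The main obstacle is purely the bookkeeping forced by non-commutativity: the quaternion $p$ must stay on the right and $\bar s$ on the left at every stage, while $\bar s\mathcal I$ and $\bar p\mathcal I$ do not commute with $T$, so products like $(T-\bar s\mathcal I)\mathcal Q_p(T)$ cannot be rearranged freely. What makes the verification tractable is that the real parts $2\Re(s)=s+\bar s$, $2\Re(p)=p+\bar p$ and the norms $|s|^2=s\bar s$, $|p|^2=p\bar p$ are central in $\mathbb H$, so all the cross-terms can be sorted by linearity. An alternative, slightly slicker route would be to use the power-series expansions of $S_L^{-1}$ and $S_R^{-1}$ from the theorem just recalled, valid for $\|T\|<\min(|s|,|p|)$, and verify the identity termwise by summing a double geometric series in $s^{-1}$ and $p^{-1}$; the identity then extends to all $(s,p)\in\rho_S(T)\times\rho_S(T)$ with $p\notin[s]$ by slice hyperholomorphic continuation in each variable.
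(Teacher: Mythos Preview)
The paper does not actually prove this theorem; it merely cites \cite[Thm.~3.8]{ACGS} and records the statement. So there is no in-paper proof to compare against, but I can compare to the original argument in that reference.

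Your primary approach has a genuine gap. You correctly note that $\bar s\mathcal I$ and $\bar p\mathcal I$ do not commute with $T$. But precisely this obstructs your key step: when you ``multiply the whole equation on the left by $\mathcal Q_s(T)\mathcal Q_p(T)$'', on the left-hand side you must commute $\mathcal Q_s(T)\mathcal Q_p(T)$ past the factor $(T-\bar s\mathcal I)$, and on the right-hand side past the leading factor $\bar s$. Since $\mathcal Q_s(T)\mathcal Q_p(T)$ is a polynomial in $T$, it commutes with $T$ but \emph{not} with $\bar s\mathcal I$ for general $T\in\mathcal B(X)$. Thus the reduction to your displayed ``operator-polynomial identity'' is not justified, and in fact that identity cannot be organized as ``powers of $T$ on the left, coefficients $s,\bar s,p,\bar p$ in between'' once $\bar s$ and $T$ fail to commute: products such as $\bar s T^2$ and $T^2\bar s$ are genuinely different and both appear.

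The proof in \cite{ACGS} avoids this by never attempting to clear the operator inverses. After multiplying through by $\mathcal Q_s(p)=p^2-2s_0p+|s|^2$ (all real coefficients), it repeatedly applies the individual $S$-resolvent equations
\[
S_L^{-1}(p,T)\,p = T\,S_L^{-1}(p,T)+\mathcal I,\qquad s\,S_R^{-1}(s,T)=S_R^{-1}(s,T)\,T+\mathcal I,
\]
which hold for arbitrary $T\in\mathcal B(X)$, to push the scalar factors $p$ and $s$ through the resolvents one at a time until both sides match. Your alternative power-series route (verify termwise for $\|T\|<\min(|s|,|p|)$ via the double geometric series, then extend by slice-hyperholomorphic continuation) is valid and is another standard way to obtain the result.
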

	In order to give the definition of the $S$-functional calculus we need the following classes of functions.
\newline
\textbf{Notation}
		Let $T \in \mathcal{B}(X)$. We denote by $SH_L(\sigma_S(T))$, $SH_R(\sigma_S(T))$ and $N(\sigma_S(T))$ the sets of all left, right and intrinsic slice hyperholomorphic functions $f$, respectively, with $ \sigma_S(T) \subset dom(f)$.
	
	\begin{definition}[$S$-functional calculus]
		\label{Sfun}
		Let $T \in \mathcal{B}(X)$. Let $U$ be a slice Cauchy domain that contains $\sigma_S(T)$  and $\overline{U}$ is contained in the domain of $f$.  Set $ds_J=-dsJ$ for $J\in \mathbb{S}$ so we define
		\begin{equation}
			\label{Scalleft}
			f(T):={{1}\over{2\pi }} \int_{\partial (U\cap \mathbb{C}_J)} S_L^{-1} (s,T)\  ds_J \ f(s), \ \ {\rm for\ every}\ \ f\in SH_L(\sigma_S(T))
		\end{equation}
		and
		\begin{equation}
			\label{Scalright}
			f(T):={{1}\over{2\pi }} \int_{\partial (U\cap \mathbb{C}_J)} \  f(s)\ ds_J
			\ S_R^{-1} (s,T),\ \  {\rm for\ every}\ \ f\in SH_R(\sigma_S(T)).
		\end{equation}
	\end{definition}
	The definition of $S$-functional calculus is well posed since the integrals in (\ref{Scalleft}) and (\ref{Scalright}) depend neither on $U$ and nor on the imaginary unit $J\in\mathbb{S}$, see \cite{JFACSS}, \cite[Thm. 3.2.6]{CGKBOOK}.

	\subsection{The $F$-functional calculus}
	Let us consider $T=T_{0}+ T_1e_1+T_2e_2+T_3e_3$ such that $T \in \mathcal{BC}(X)$.
	\begin{definition}\label{QCS}
		Let $T \in \mathcal{BC}(X)$. For $s \in \mathbb{H}$ we set
		$$\mathcal{Q}_{c,s}(T)=s^2-s(T+\overline{T})+T \overline{T},$$
		where $ \overline {T}=T_{0}- T_1e_1-T_2e_2-T_3e_3$. We define the $ F$-resolvent set as
		$$ \rho_F(T)=\{s\in\mathbb H:\, \mathcal Q_{c,s}(T)^{-1}\in\mathcal B(X)\}. $$
		Moreover, we define the $F$-spectrum of $T$ as
		$$ \sigma_{F}(T)= \mathbb{H}\setminus \rho_F(T).$$
	\end{definition}
	By \cite[Prop. 4.14]{CSS1} we have that the $F$-spectrum is the commutative version of the $S$-spectrum, i.e., we have
	$$ \sigma_{F}(T)=\sigma_{S}(T), \qquad T \in \mathcal{BC}(X),$$
and consequently $\rho_F(T)=\rho_S(T)$.

	For $s \in \rho_{S}(T)$ the operator $ \mathcal{Q}_{c,s}(T)^{-1}$ is called the commutative pseudo $S$-resolvent operator of $T$.
	
	It is possible to define a commutative version of the $S$-functional calculus.

	\begin{theorem}
		Let $T \in \mathcal{BC}(X)$ and $s \in \mathbb{H}$ be such that $\| T\| < s$. Then
		$$ \sum_{m=0}^\infty T^m s^{-1-m}=(s \mathcal{I}-\overline T)\mathcal Q_{c,s}(T)^{-1},$$
and
		$$ \sum_{m=0}^\infty s^{-1-m}T^m =\mathcal Q_{c,s}(T)^{-1}(s\mathcal{I}-\overline T).$$
	\end{theorem}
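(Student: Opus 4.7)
The plan is to mimic the scalar geometric series identity of Theorem \ref{ts} at the operator level, exploiting the commutativity of the components $T_0,T_1,T_2,T_3$ to handle the ordering of factors. Absolute norm convergence of $\sum_{m=0}^\infty T^m s^{-1-m}$ (and similarly of $\sum_{m=0}^\infty s^{-1-m}T^m$) is immediate from $\|T^m s^{-1-m}\|\le |s|^{-1}(\|T\|/|s|)^m$ together with the hypothesis $\|T\|<|s|$. Moreover, $\|T\|<|s|$ forces $s\in\rho_S(T)=\rho_F(T)$, so $\mathcal{Q}_{c,s}(T)^{-1}$ is a bounded operator and it suffices to verify the first identity after right multiplication by $\mathcal{Q}_{c,s}(T)$.

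The algebraic input is a Cayley--Hamilton-type identity for $T$: expanding $T=T_0+T_1e_1+T_2e_2+T_3e_3$ and using both the commutativity of the $T_i$ and the relations $e_ie_j+e_je_i=-2\delta_{ij}$ gives $(T-T_0)^2=-(T_1^2+T_2^2+T_3^2)$, hence
$$ T^2-(T+\overline T)T+T\overline T=0. $$
Setting $A:=T+\overline T=2T_0$ and $B:=T\overline T=T_0^2+T_1^2+T_2^2+T_3^2$, we have $\mathcal{Q}_{c,s}(T)=s^2-sA+B$. Crucially, $A$ and $B$ are assembled from the $T_i$ alone, with no imaginary units; consequently they commute with left and right quaternionic scalar multiplication, with $T$, and with $\overline T$.

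Using these commutations, the partial sum $S_N:=\sum_{m=0}^N T^m s^{-1-m}$ satisfies
$$ S_N\,\mathcal{Q}_{c,s}(T)=\sum_{m=0}^N T^m s^{1-m}-A\sum_{m=0}^N T^m s^{-m}+B\sum_{m=0}^N T^m s^{-1-m}. $$
Collecting by powers of $s$: the $s^1$-coefficient is $\mathcal I$, the $s^0$-coefficient is $T-A=-\overline T$, and for $1\le k\le N-1$ the $s^{-k}$-coefficient equals $T^{k+1}-AT^k+BT^{k-1}=T^{k-1}(T^2-AT+B)=0$ by the identity above. The leftover terms are $(BT^{N-1}-AT^N)s^{-N}$ and $BT^N s^{-N-1}$, both of which tend to $0$ in norm as $N\to\infty$ since $\|T\|<|s|$. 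Hence
$$ \Big(\sum_{m=0}^\infty T^m s^{-1-m}\Big)\mathcal{Q}_{c,s}(T)=s\mathcal I-\overline T, $$
and right multiplication by $\mathcal{Q}_{c,s}(T)^{-1}$ proves the first identity. The second identity follows by the mirror computation, left-multiplying $\mathcal{Q}_{c,s}(T)$ onto $\sum_m s^{-1-m}T^m$ and invoking exactly the same telescoping.

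The main obstacle is bookkeeping: since $s$ does not commute with $T$ in general (only with each $T_i$ individually), one cannot naively factor $\mathcal{Q}_{c,s}(T)$ as $(s\mathcal I-T)(s\mathcal I-\overline T)$. What rescues the argument is that the potentially non-commuting parts of $\mathcal{Q}_{c,s}(T)$ assemble into the \emph{real} operators $A$ and $B$, and these do commute with everything in sight, so the coefficients of the telescoping series can be read off as if $s$ were a real scalar.
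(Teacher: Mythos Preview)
Your proof is correct. The paper states this theorem in its preliminaries section without proof, citing it as a known result from the literature on the commutative $S$-functional calculus, so there is no in-paper argument to compare against. Your approach---using the quadratic identity $T^2-(T+\overline T)T+T\overline T=0$ (valid precisely because the components $T_i$ commute) to make the series $S_N\,\mathcal{Q}_{c,s}(T)$ telescope down to $s\mathcal I-\overline T$ plus a vanishing tail---is the standard direct verification, and your emphasis on why the coefficients $A=2T_0$ and $B=\sum_i T_i^2$ commute with both $T$ and with quaternionic scalars is exactly the point that makes the computation go through in this noncommutative setting.
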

	\begin{definition}
		Let $T \in \mathcal{BC}(X)$ and $s \in \rho_{S}(T)$. We define the left commutative $S$-resolvent operator as
		$$ S^{-1}_L(s,T)=(s \mathcal{I}-\overline T)\mathcal Q_{c,s}(T)^{-1}, $$
		and the right commutative $S$-resolvent operator as
$$
S^{-1}_R(s,T)=\mathcal Q_{c,s}(T)^{-1}(s\mathcal{I}-\overline T).
$$
	\end{definition}

For the sake of simplicity we denote the commutative version of the $S$-resolvent operators with the same symbols used for the noncommutative ones. It is possible to define an $S$-functional calculus as done in Definition \ref{Sfun}.
Below, when dealing with the $S$-resolvent operators, we intend their commutative version.
\newline
\newline
We conclude with the definition of the $F$-functional calculus.	
	\begin{definition}[$F$-resolvent operators] Let $T \in \mathcal{BC}(X)$. We define the left $F$-resolvent operator as
		$$ F_{L}(s,T)=-4(s\mathcal{I}- \overline{T}) \mathcal{Q}_{c,s}(T)^{-2}, \qquad s \in \rho_{S}(T),$$
		and the right $F$-resolvent operator as
		$$ F_{R}(s,T)=-4\mathcal{Q}_{c,s}(T)^{-2}(s\mathcal{I}- \overline{T}) , \qquad s \in \rho_{S}(T).$$
	\end{definition}
	With the above definitions and Theorem \ref{Fueter} at hand, we can recall the $F$-functional calculus was first introduced in \cite{CSS1} and then investigated in \cite{CDS, CG,CS}.
	
	\begin{definition}[The $F$-functional calculus for bounded operators]
Let $U$ be a slice Cauchy domain that contains $\sigma_S(T)$  and $\overline{U}$ is contained in the domain of $f$.
		Let $T= T_1e_1+T_2e_2+T_3e_3 \in\mathcal{BC}(X)$, assume that the operators $T_{\ell}$, $\ell=1,2,3$ have real spectrum and set $ds_J=ds/J$, where $J\in \mathbb{S}$.
		For any function $f\in SH_L(\sigma_S(T))$, we define
		\begin{equation}\label{DefFCLUb}
			\breve{f}(T):=\frac{1}{2\pi}\int_{\pp(U\cap \mathbb{C}_J)} F_L(s,T) \, ds_J\, f(s).
		\end{equation}
		For any $f\in SH_R(\sigma_S(T))$, we define
		\begin{equation}\label{SCalcMON}
			\breve{f}(T):=\frac{1}{2\pi}\int_{\pp(U\cap \mathbb{C}_J)} f(s) \, ds_J\, F_R(s,T).
		\end{equation}
		
	\end{definition}
	The definition of the $F$-functional calculus is well posed since
	the integrals in (\ref{DefFCLUb}) and (\ref{SCalcMON}) depend neither on $U$ and nor on the imaginary unit $J\in\mathbb{S}$.
	
	The left and right $F$-resolvent operators satisfy the equalities in the next result \cite[Thm. 7.3.1]{CGKBOOK}:
	
	\begin{theorem}[Left and right $F$-resolvent equations]\label{fre} Let $T\in\mathcal{BC}(X)$ and let $s\in\rho_S(T)$. The $F$-resolvent operators satisfy the equations
		$$ F_L(s,T)s-TF_L(s,T)=-4\mathcal Q_{c,s}(T)^{-1}$$
		and
		$$ sF_R(s,T)-F_R(s,T)T= -4\mathcal Q_{c,s}(T)^{-1}.$$
	\end{theorem}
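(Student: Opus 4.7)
My plan is to lift the scalar identity of Theorem \ref{t3} to the operator level. The key observation, and the only place where the commuting-components hypothesis $T\in\mathcal{BC}(X)$ enters, is that $T+\overline{T}=2T_0\mathcal{I}$ and $T\overline{T}=(T_0^2+T_1^2+T_2^2+T_3^2)\mathcal{I}$ are both \emph{real-scalar} operators. Consequently they commute with $T$, with $\overline{T}$, and with every $s\in\mathbb{H}$, so $s$ commutes with $\mathcal{Q}_{c,s}(T)=s^2\mathcal{I}-s(T+\overline{T})+T\overline{T}$ and with all its inverse powers.

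For the left equation I would substitute $F_L(s,T)=-4(s\mathcal{I}-\overline{T})\mathcal{Q}_{c,s}(T)^{-2}$ into $F_L(s,T)s-TF_L(s,T)$, slide the isolated $s$ past $\mathcal{Q}_{c,s}(T)^{-2}$, and factor $\mathcal{Q}_{c,s}(T)^{-2}$ on the right to reach
$$-4\bigl[(s\mathcal{I}-\overline{T})s-T(s\mathcal{I}-\overline{T})\bigr]\mathcal{Q}_{c,s}(T)^{-2}.$$
Expanding the bracket gives $s^2\mathcal{I}-\overline{T}s-Ts+T\overline{T}$; since $T+\overline{T}$ is real-scalar, the middle two terms regroup as $-s(T+\overline{T})$, so the bracket is exactly $\mathcal{Q}_{c,s}(T)$, and one of the two factors of $\mathcal{Q}_{c,s}(T)^{-1}$ cancels to leave $-4\mathcal{Q}_{c,s}(T)^{-1}$, as claimed.

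The right equation is handled by the mirror computation: substitute $F_R(s,T)=-4\mathcal{Q}_{c,s}(T)^{-2}(s\mathcal{I}-\overline{T})$, slide $s$ through $\mathcal{Q}_{c,s}(T)^{-2}$ from the left in $sF_R(s,T)$, expand the resulting $(s\mathcal{I}-\overline{T})s-(s\mathcal{I}-\overline{T})T$ sitting on the right of $\mathcal{Q}_{c,s}(T)^{-2}$, and regroup as before. I foresee no serious obstacle: the only care needed is in the bookkeeping of $sT$ versus $Ts$, and this is entirely controlled by the two scalar commutations $s(T+\overline{T})=(T+\overline{T})s$ and $sT\overline{T}=T\overline{T}s$ inherited from $T\in\mathcal{BC}(X)$. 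In effect, the proof is the operator analogue of the short algebraic manipulation that establishes Theorem \ref{t3}.
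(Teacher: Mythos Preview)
Your argument is correct and is exactly the direct algebraic verification one would expect; the paper itself does not prove this statement but quotes it as a preliminary result from \cite[Thm.~7.3.1]{CGKBOOK}, so there is no alternative proof to compare against. One small slip: in the right equation, after sliding $s$ through $\mathcal{Q}_{c,s}(T)^{-2}$ you obtain $s(s\mathcal{I}-\overline{T})-(s\mathcal{I}-\overline{T})T$ on the right, not $(s\mathcal{I}-\overline{T})s-(s\mathcal{I}-\overline{T})T$ as you wrote; the latter expands to $s^2-\overline{T}s-sT+\overline{T}T$, and $\overline{T}s\neq s\overline{T}$ in general, so that version would not collapse to $\mathcal{Q}_{c,s}(T)$. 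With the correct ordering the expansion $s^2-s\overline{T}-sT+\overline{T}T=s^2-s(T+\overline{T})+T\overline{T}=\mathcal{Q}_{c,s}(T)$ goes through (using $\overline{T}T=T\overline{T}$), and the rest is fine.
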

	
	\section{Axially harmonic functions}
	In this section we solve the first part of Problem \ref{p02}. We begin by rewriting the Fueter mapping theorem (see Theorem \ref{F1}) in a more refined way, considering the factorization of the Laplace operator $\Delta$ in terms of the Fueter operator $\mathcal{D}$ and its conjugate $\overline{\mathcal{D}}$.
	
	\begin{theorem}[Fueter mapping theorem (refined)]
		\label{F2}
		Let $f_{0}(z)= \alpha(u,v)+i \beta(u,v)$ be a holomorphic function defined in a domain (open and connected) $D$ in the upper-half complex plane and let
		$$ \Omega_D=\{q=q_0+\underline{q} \, | \, (q_0, |\underline{q}|) \in D\}$$
		be the open set induced by $D$ in $\mathbb{H}$. The operator $T_{F1}$ defined by
		$$ f(q)= T_{F1}(f_0):= \alpha(q_0, |\underline{q}|)+ \frac{\underline{q}}{|\underline{q}|}\beta(q_0, |\underline{q}|)$$
		maps the set of holomorphic functions in the set of intrinsic slice hyperholomorphic functions. Then the function
		$$ \tilde{f}(q):=T'_{F2} \left(\alpha(q_0, |\underline{q}|)+ \frac{\underline{q}}{|\underline{q}|}\beta(q_0, |\underline{q}|)\right),$$
		where $T'_{F2}:= \mathcal{D}$ is the Fueter operator, is in the kernel of the Laplace operator, i.e.,
		$$ \Delta \tilde{f}=0 \quad \hbox{on} \quad \Omega_D.$$
		Moreover,
		$$ \breve{f}(q):=T''_{F2} \tilde{f},$$
		where $T''_{F2}= \overline{\mathcal{D}}$ and $\overline{\mathcal{D}}=\partial_{q_0}-\sum_{i=1}^3e_i \partial_{q_i}$, is the kernel of the Fueter operator, i.e.,
		$$ \mathcal{D} \breve{f}=0 \quad \hbox{on} \quad \Omega_D.$$
	\end{theorem}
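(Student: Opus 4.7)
The statement is really a refinement of the classical Fueter Theorem \ref{F1}, and the proof should exploit the factorisation
$$\Delta \;=\; \mathcal{D}\,\overline{\mathcal{D}} \;=\; \overline{\mathcal{D}}\,\mathcal{D}$$
that was already recalled in the introduction. My plan has three short steps.

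First, I would verify that factorisation explicitly (or simply cite it, since it is stated right after the definition of $\mathcal{D}$ and $\overline{\mathcal{D}}$). Expanding the compositions one gets $\partial_{q_0}^2 + \sum_i e_i^2 \partial_{q_i}^2$ plus cross terms of the form $e_i e_j \partial_{q_i}\partial_{q_j}$; since mixed partials commute and $e_i e_j = -e_j e_i$ for $i\neq j$, all cross terms cancel, leaving the scalar Laplacian. In particular, $\Delta$ commutes with $\mathcal{D}$ (and with $\overline{\mathcal{D}}$) because it acts as a scalar second-order operator.

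Second, for the harmonicity claim $\Delta \tilde f = 0$: the function $f = T_{F1}(f_0)$ is the intrinsic slice hyperholomorphic extension of $f_0$ on $\Omega_D$, so Theorem \ref{F1} applies and yields $\mathcal{D}\breve f = 0$, where $\breve f = \Delta f$. Now compute
\begin{equation*}
\Delta\tilde f \;=\; \Delta \mathcal{D} f \;=\; \mathcal{D}\overline{\mathcal{D}}\mathcal{D} f \;=\; \mathcal{D}\bigl(\overline{\mathcal{D}}\mathcal{D}\,f\bigr) \;=\; \mathcal{D}(\Delta f) \;=\; \mathcal{D}\breve f \;=\; 0,
\end{equation*}
which gives the claim on $\Omega_D$.

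Third, for the Fueter-regularity claim $\mathcal{D}\breve f = 0$ with $\breve f = \overline{\mathcal{D}}\tilde f$: by construction $\breve f = \overline{\mathcal{D}}\mathcal{D} f = \Delta f$, so this is nothing other than the conclusion of Theorem \ref{F1} applied directly to $f = T_{F1}(f_0)$. There is really no genuine obstacle here; the only point requiring a brief justification is the commutativity of $\mathcal{D}$ with the scalar operator $\Delta$, which is what allows one to move $\mathcal{D}$ past $\overline{\mathcal{D}}\mathcal{D}$ in the computation above. Everything else reduces to bookkeeping on top of the classical Fueter theorem.
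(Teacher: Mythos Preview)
Your argument is correct and matches the paper's treatment. The paper does not supply a separate proof of this theorem; it is presented as an immediate refinement of Theorem~\ref{F1} via the factorisation $\Delta=\mathcal{D}\overline{\mathcal{D}}=\overline{\mathcal{D}}\mathcal{D}$, and your three steps make that deduction explicit in exactly the intended way.
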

\begin{remark}
	The consideration in Remark \ref{remevenodd} holds obviously also in the case of
Theorem \ref{F2}.
\end{remark}
		In Theorem \ref{F2} we have applied to the slice hyperholomorphic function $f$ firstly the Fueter operator and then the operator $\overline{\mathcal{D}}$, while in Theorem \ref{F1} we apply directly the Laplacian.
Therefore, there is a class of functions that lies between the class of slice hyperholomorphic functions and the class of axially monogenic functions: it is the so-called class of axially harmonic functions that we introduce below.

\begin{definition}[Axially harmonic function]\label{axharm}
Let $U \subseteq \mathbb{H}$ be an axially symmetric open set not intersecting the real line, and let
$$
 \mathcal{U} =\{ (u,v)\in \mathbb{R}\times\mathbb{R}^+ \ | \ u+\mathbb{S}v \in U\}.
 $$
 Let $f: U \to \mathbb{H}$ be a function, of class $\mathcal{C}^3$, of the form
		$$ f(q)=\alpha(u,v)+J\beta(u,v), \quad q=u+Jv, \quad J \in \mathbb{S},$$
		where $\alpha$ and $\beta$ are $ \mathbb{H}$-valued  functions.
More in general, let $U \subseteq \mathbb{H}$ be an axially symmetric open set and let
$$
 \mathcal{U} =\{ (u,v)\in \mathbb{R}^2 \ | \ u+\mathbb{S}v \in U\},
 $$
 and assume that
\begin{equation}\label{eveoddll}
 \alpha(u,v)=\alpha(u,-v), \ \ \ \  \beta(u,v)=-\beta(u,-v) \ \\ \ \  \hbox{for all} \, \, (u,v) \in \mathcal{U}.
\end{equation}
Let us set
$$
\tilde{f}(q):=\mathcal{D}f(q),\ \  {\rm for}\ \  q\in U.
$$
If
$$
\Delta \tilde{f}(q)=0, \ \  {\rm for}\ \  q\in U
$$
we say that $\tilde{f}$ is axially harmonic on $U$.
\end{definition}

The axially monogenic functions satisfy a system of differential equations called Vekua system, see \cite{CS0}. In the case of axially harmonic functions, the functions $A(q_0,r)$ and $B(q_0,r)$ satisfy a second order system of differential equations.

	\begin{theorem}
		Let $U$ be an axially symmetric open set in $ \mathbb{H}$, not intersecting the real line,
and let $ \tilde{f}(q)= A(q_0,r)+ \underline{\omega}B(q_0,r)$ be an axially harmonic function on $U$, $r>0$ and $ \underline{\omega} \in \mathbb{S}$.
Then the functions $A=A(q_0,r)$ and $B=B(q_0,r)$ satisfy the following system
		$$ \begin{cases}
			\partial_{q_0}^2A(q_0,r)+ \partial_r^2A(q_0,r) + \displaystyle{\frac{2}{r} \partial_r A(q_0,r)}=0
\\
			\displaystyle{\partial_{q_0}^2B(q_0,r)+ \partial_r^2 B(q_0,r)+ \frac{2r \partial_r B(q_0,r)-2B(q_0,r)}{r^2}}=0.
		\end{cases}
		$$
	\end{theorem}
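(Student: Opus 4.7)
The plan is to compute $\Delta \tilde{f}$ directly in the axial coordinates $(q_0, r, \underline{\omega})$ with $r=|\underline{q}|$ and $\underline{\omega}=\underline{q}/r$, and then to use the axial symmetry of $U$ to split the resulting vanishing identity into one equation for $A$ and one for $B$.

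First, split the four-dimensional Laplacian as $\Delta = \partial_{q_0}^2 + \Delta_{\underline{q}}$, where $\Delta_{\underline{q}} = \sum_{i=1}^{3} \partial_{q_i}^2$. Since $A(q_0,r)$ depends on $\underline{q}$ only through $r$, the standard radial Laplacian in $\mathbb{R}^3$ gives $\Delta_{\underline{q}} A = \partial_r^2 A + (2/r)\partial_r A$, which together with $\partial_{q_0}^2 A$ produces exactly the first bracket in the claimed system.

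The more delicate term is $\Delta(\underline{\omega} B)$. Writing $\underline{\omega} B(q_0,r) = \sum_{i=1}^{3} e_i (q_i/r) B(q_0,r)$ and pulling the constants $e_i$ out of $\Delta$, the problem reduces to computing $\Delta\bigl((q_i/r)\,B(q_0,r)\bigr)$. The plan is to apply the Leibniz identity $\Delta(uv) = (\Delta u)v + 2\nabla u \cdot \nabla v + u\Delta v$ with $u=q_i/r$ and $v=B(q_0,r)$. A short direct calculation (or the degree-one spherical harmonic relation) gives $\Delta_{\underline{q}}(q_i/r) = -2 q_i/r^3$, while the cross term vanishes because $\nabla_{\underline{q}} B$ is radial and $\nabla_{\underline{q}}(q_i/r)$ is tangential to the sphere $|\underline{q}|=r$; explicitly, $\sum_j (q_j/r)(\delta_{ij}/r - q_i q_j/r^3) = q_i/r^2 - q_i/r^2 = 0$. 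Summing on $i$ and reassembling yields
$$\Delta(\underline{\omega} B) = \underline{\omega}\Bigl[\partial_{q_0}^2 B + \partial_r^2 B + (2/r)\partial_r B - (2/r^2) B\Bigr],$$
and the last two terms inside the bracket combine into $(2r\partial_r B - 2B)/r^2$, which is precisely the coefficient appearing in the second equation.

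Putting the two computations together, $\Delta \tilde{f}(q) = P(q_0,r) + \underline{\omega}\,Q(q_0,r)$, where $P$ and $Q$ are the two left-hand sides in the claimed system. Since $U$ is axially symmetric, for any admissible pair $(q_0,r)$ the entire sphere $q_0 + \mathbb{S} r$ lies in $U$, so the identity $P(q_0,r) + J\,Q(q_0,r) = 0$ must hold for every $J\in\mathbb{S}$. Choosing two distinct imaginary units $J_1\neq J_2$ and subtracting forces $(J_1-J_2)Q=0$; since $\mathbb{H}$ is a division algebra this gives $Q=0$, and then $P=0$ as well. This produces both equations simultaneously. The only computational step of any substance is the identity for $\Delta_{\underline{q}}((q_i/r) B)$; everything else is the radial Laplacian in $\mathbb{R}^3$ and an algebraic separation that exploits the division-algebra structure of $\mathbb{H}$.
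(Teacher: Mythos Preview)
Your proof is correct, but it takes a different route from the paper's. The paper exploits the Fueter factorization $\Delta=\mathcal{D}\overline{\mathcal{D}}$: it first applies $\overline{\mathcal{D}}$ to $\tilde{f}=A+\underline{\omega}B$ using the known formula $\partial_{\underline{q}}(A+\underline{\omega}B)=\underline{\omega}\,\partial_r A-\partial_r B-\tfrac{2}{r}B$, obtaining an intermediate axial function $A'+\underline{\omega}B'$ with $A'=\partial_{q_0}A+\partial_r B+\tfrac{2}{r}B$ and $B'=\partial_{q_0}B-\partial_r A$; it then applies $\mathcal{D}$ to this, obtains a first-order system in $A',B'$, and substitutes back. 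Your approach instead computes $\Delta$ directly in one pass via $\Delta=\partial_{q_0}^2+\Delta_{\underline{q}}$, using the radial Laplacian on $A$ and the Leibniz identity $\Delta(uv)=(\Delta u)v+2\nabla u\cdot\nabla v+u\Delta v$ with $u=q_i/r$, $v=B$ for the $\underline{\omega}B$ term. Your argument is more elementary and self-contained (it needs only the $\mathbb{R}^3$ radial Laplacian and the computation $\Delta_{\underline{q}}(q_i/r)=-2q_i/r^3$), whereas the paper's argument is thematically aligned with its central Fueter decomposition but relies on the cited formula for $\partial_{\underline{q}}$ on axial functions. You are also more careful than the paper in justifying the split $P+\underline{\omega}Q=0\Rightarrow P=Q=0$: you use the axial symmetry of $U$ to vary $J\in\mathbb{S}$ and invoke the division-algebra property, while the paper simply asserts the split without comment.
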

	\begin{proof}
		An axially harmonic function is written as
$$
\tilde{f}(q)= A(q_0,r)+ \underline{\omega}B(q_0,r),\ \ \ \ q=q_0+r \underline{\omega} \in U
 $$
		and it is in the kernel of the operator $ \Delta=\mathcal{D}\overline{\mathcal{D}}$.
		We denote the Fueter operator as $\mathcal{D}= \partial_{q_0}+ \partial_{\underline{q}}$ and $\overline{\mathcal{D}}= \partial_{q_0}- \partial_{\underline{q}}$, where $ \partial_{\underline{q}}= e_1 \partial_{q_1}+e_2 \partial_{q_2}+e_3 \partial_{q_3}$.
		We know that (see \cite{Dixan})
		\begin{equation}
			\label{cong}
			\partial_{\underline{q}}(A(q_0,r)+ \underline{\omega}B(q_0,r))= \underline{\omega} \partial_r A(q_0,r)- \partial_r B(q_0,r)- \frac{2}{r} B(q_0,r).
		\end{equation}
		This implies that
\[
\begin{split}
\overline{\mathcal{D}}f(q)&=(\partial_{q_0}-\partial_{\underline{q}})(A(q_0,r)+ \underline{\omega}B(q_0,r))
\\
&
= \left(\partial_{q_0}A(q_0,r)+ \partial_r B(q_0,r)+ \frac{2}{r} B(q_0,r)\right)+ \underline{\omega} \left(\partial_{q_0}B(q_0,r)- \partial_r A(q_0,r)\right).
\end{split}
\]
		By setting
$$
A'(q_0,r):=\partial_{q_0}A(q_0,r)+ \partial_r B(q_0,r)+ \frac{2}{r} B(q_0,r)
\ \ \
{\rm and}
\ \ \
B'(q_0,r):=\partial_{q_0}B(q_0,r)- \partial_r A(q_0,r),
$$
we get
$$
\overline{\mathcal{D}}f(q_0,r)= A'(q_0,r) + \underline{\omega}B'(q_0,r).
$$
Now, by applying another time formula \eqref{cong} we obtain
$$
 \partial_{\underline{q}} \overline{D}f(q)= \underline{\omega} \partial_r A'(q_0,r)- \partial_r B'(q_0,r)- \frac{2}{r} B'(q_0,r).
$$
Therefore we have
\[
\begin{split}
 \Delta f(q)&= \mathcal{D} \overline{\mathcal{D}} f(q)
 \\
 &
 =(\partial_{q_0}+ \partial_{\underline{q}}) \overline{D}f(q_0,r)
 \\
 &
 =\left(\partial_{q_0}A'(q_0,r)- \partial_r B'(q_0,r)- \frac{2}{r} B'(q_0,r)\right)
- \underline{\omega} \left(\partial_{q_0}B'(q_0,r)+ \partial_r A'(q_0,r)\right).
\end{split}
\]
Since the function $f$ is axially harmonic we have $\Delta f(q)=0$, thus we get
		\begin{equation}
			\label{sym}
			\begin{cases}
				\partial_{q_0} A'(q_0,r)- \partial_r B'(q_0,r) - \frac{2}{r} B'(q_0,r)=0\\
				\partial_{q_0}B'(q_0,r)+ \partial_r A'(q_0,r)=0.
			\end{cases}
		\end{equation}
		Now, we write the system \eqref{sym} in terms of $A$ and $B$ by substituting $A'$ and $B'$
		\begin{eqnarray}
			\label{deri1}
			\nonumber
			\partial_{q_0} A'(q_0,r)- \partial_r B'(q_0,r) - \frac{2}{r} B'(q_0,r) &=& \partial^2_{q_0} A(q_0,r)+ \partial_{q_0} \partial_r B(q_0,r)+ \frac{2}{r} \partial_{q_0}B(q_0,r)
\\\nonumber
 & &-\partial_r \partial_{q_0}B(q_0,r)
+ \partial_r^2 A(q_0,r)- \frac{2}{r} \partial_{q_0}B(q_0,r)+ \frac{2}{r} \partial_r A(q_0,r)\\
			&=& \partial^2_{q_0} A(q_0,r)+ \partial_r^2A(q_0,r) + \frac{2}{r} \partial_r A(q_0,r).
		\end{eqnarray}
		\begin{eqnarray}
			\label{deri2}
			\nonumber
			\partial_{q_0}B'(q_0,r)+ \partial_r A'(q_0,r) &=& \partial_{q_0}^2 B(q_0,r)- \partial_{q_0} \partial_r A(q_0,r)+ \partial_{r} \partial_{q_0}A(q_0,r)
\\
\nonumber
&&+ \partial_{r}^2B(q_0,r)+ \frac{2}{r} \partial_r B(q_0,r)- \frac{2}{r^2}B(q_0,r)
\\
			&=& \partial_{q_0}^2 B(q_0,r)+ \partial_r^2 B(q_0,r)+ \frac{2r \partial_r B(q_0,r)-2B(q_0,r)}{r^2}.
		\end{eqnarray}
		By putting \eqref{deri1} and \eqref{deri2} in \eqref{sym} we get the statement.
	\end{proof}

\begin{remark}		
		If we suppose that a function $f$ is harmonic over the ball $B_r(p)$ of radius $r$ and center $p$, and continuous in the closure of the ball we can write
		$$ f(q)= \frac{r^2-|q-p|^2}{|\partial B_r(p) | r} \int_{\partial B_r(p) } \frac{f(y)}{|y-q|^4} dy,$$
where $|\partial B_r(p) |$ is the measure of the sphere.
	\end{remark}
	
	\section{Integral representation axially harmonic functions}
In this section we  show how to write an axially haromonic function in integral form. The main advantage of this approach is that it is enough to compute an integral of slice hyperholomorphic functions in order to get an axially harmonic function. The crucial point to get the integral representation is to apply the Fueter operator $ \mathcal{D}$ to the slice hyperholomorphic Cauchy kernels written in second form, see Definition \ref{d1}.

	\begin{theorem}\label{t1}
		Let $s,\, q\in \mathbb H$ be such that $q\notin [s]$ then
		$$\mathcal{D} S^{-1}_L(s,q)=-2\PRes_{c,s}(q)^{-1}$$
		and
		$$S^{-1}_R(s,q) \mathcal{D}=-2\PRes_{c,s}(q)^{-1} .$$
	\end{theorem}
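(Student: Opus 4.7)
The plan is to verify both identities by direct differentiation from the form II Cauchy kernels of Definition~\ref{d1}. Write $\Phi(q) := \PRes_{c,s}(q) = s^2 - 2\Re(q) s + |q|^2$, so that $S_L^{-1}(s,q) = (s - \overline q)\Phi(q)^{-1}$ and $S_R^{-1}(s,q) = \Phi(q)^{-1}(s - \overline q)$. The decisive observation is that, for fixed $s = s_0 + \underline s$, the function $\Phi(q)$ takes values in the commutative plane $\mathbb{C}_{J_s}$ with $J_s = \underline s / |\underline s|$: indeed $s^2 = s_0^2 - |\underline s|^2 + 2 s_0 \underline s$, $-2\Re(q)s$, and $|q|^2$ all lie in $\mathbb{C}_{J_s}$. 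Consequently $\Phi(q)^{-1} \in \mathbb{C}_{J_s}$, and since $\partial_{q_0}\Phi = 2(q_0 - s) \in \mathbb{C}_{J_s}$ while $\partial_{q_i}\Phi = 2q_i \in \mathbb{R}$ for $i = 1,2,3$, one may differentiate using the commutative rule $\partial_{q_i}(\Phi^{-1}) = -(\partial_{q_i}\Phi)\Phi^{-2}$. The hypothesis $q \notin [s]$ guarantees $\Phi(q) \neq 0$, so the inverse is well defined.

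For the left identity, I would apply the Leibniz rule $\mathcal{D}(fg) = (\mathcal{D}f)g + \sum_{i=0}^{3} e_i f \, \partial_{q_i} g$ with $f(q) = s - \overline q$ and $g(q) = \Phi(q)^{-1}$. A one-line computation gives $\mathcal{D}(s - \overline q) = -1 - 3 = -4$, contributing $-4\Phi^{-1}$. The remaining sum becomes $-\sum_{i=0}^{3} e_i(s-\overline q)(\partial_{q_i}\Phi)\Phi^{-2}$, and the key step is to establish the algebraic identity
\[
\sum_{i=0}^{3} e_i (s-\overline q)(\partial_{q_i}\Phi) = -2\Phi.
\]
Separating the $i=0$ contribution from the rest and using $\sum_{i=1}^{3} q_i e_i = \underline q$, this reduces to the scalar check $(s-\overline q)(q_0 - s) + \underline q(s-\overline q) = -\Phi$, a routine expansion using $\underline q^{\,2} = -|\underline q|^2$ and $|q|^2 = q_0^2 + |\underline q|^2$. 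Substituting back yields $\mathcal{D} S_L^{-1}(s,q) = -4\Phi^{-1} + 2\Phi^{-1} = -2\PRes_{c,s}(q)^{-1}$.

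The right identity is proved in parallel via the right Leibniz rule $(fg)\mathcal{D} = f(g\mathcal{D}) + \sum_i (\partial_{q_i}f)\, g\, e_i$ with $f = \Phi^{-1}$ and $g = s - \overline q$. The computation $(s - \overline q)\mathcal{D} = -4$ again yields a $-4\Phi^{-1}$ term, while in the complementary sum the scalar $\Phi^{-2} \in \mathbb{C}_{J_s}$ can be commuted freely past the real coefficients $q_i$ and past $(q_0 - s) \in \mathbb{C}_{J_s}$; this reduces matters to the twin identity $(q_0 - s)(s - \overline q) + (s - \overline q)\underline q = -\Phi$. The main obstacle throughout is purely the bookkeeping of non-commutative ordering of quaternionic factors, since $\Phi^{-1}$ commutes only with elements of $\mathbb{C}_{J_s}$ and not with the imaginary units $e_1, e_2, e_3$ or with $\underline q$. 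In both key reductions, however, $\Phi^{-2}$ only meets elements of $\mathbb{C}_{J_s}$, so no illegal commutation is required.
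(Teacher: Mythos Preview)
Your proof is correct. Both the paper and you compute $\mathcal{D}S_L^{-1}(s,q)$ by direct differentiation of the form~II kernel, but the computations are organized differently. The paper expresses each partial derivative $\partial_{q_i}S_L^{-1}(s,q)$ in terms of the left $F$-kernel $F_L(s,q)=-4(s-\overline q)\PRes_{c,s}(q)^{-2}$, and then invokes the previously established relation $F_L(s,q)s - qF_L(s,q) = -4\PRes_{c,s}(q)^{-1}$ (Theorem~\ref{t3}) to collapse the sum. Your argument is more self-contained: you never mention the $F$-kernel, and instead reduce everything to the elementary quaternionic identity $(s-\overline q)(q_0-s)+\underline q(s-\overline q)=-\PRes_{c,s}(q)$, together with the observation that $\PRes_{c,s}(q)$ lies in the commutative slice $\mathbb{C}_{J_s}$ (or in $\mathbb{R}$ when $s$ is real), so that $\partial_{q_i}(\Phi^{-1})=-(\partial_{q_i}\Phi)\Phi^{-2}$ holds without ordering ambiguity. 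The paper's route has the virtue of linking the result to the $F$-kernel machinery used later; yours is shorter and requires no prior input beyond Definition~\ref{d1}.
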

	\begin{proof}
		We prove only the first equality since the second one follows with similar computations. First we apply $\partial_{q_0}$ and $\partial_{q_i}$ for $i=1,\,2,\,3$ to the left slice hyperholomorphic Cauchy kernel
$$ S^{-1}_L(s,q)=(s-\bar{q}) \mathcal{Q}_{c,s}(q)^{-1}.$$
Thus, we have
		\[
		\begin{split}
			\partial_{q_0} S^{-1}_L(s,q)&=-\PRes_{c,s}(q)^{-1}-(s-\overline q)\PRes_{c,s}(q)^{-2}(-2s+2q_0)\\
			&= -\PRes_{c,s}(q)^{-1}-2q_0(s-\overline q)\PRes_{c,s}(q)^{-2}+2(s-\overline q)\PRes_{c,s}(q)^{-2}s\\
			&= -\PRes_{c,s}(q)^{-1}+\frac {q_0}2  F_L(s,q)-\frac 12 F_L(s,q) s,
		\end{split}
		\]
where $F_L(s,q)$ is the left $F$-kernel, see Definition \ref{FK}. Then for any $i=1,\,2,\,3$ we get
		\[
		\begin{split}
			\partial_{q_i} S^{-1}_L(s,q)&=e_i\PRes_{c,s}(q)^{-1}-2q_i(s-\overline q)\PRes_{c,s}(q)^{-2}\\
			&= e_i\PRes_{c,s}(q)^{-1} +\frac 12q_i F_L(s,q).
		\end{split}
		\]
		Thus, by Theorem \ref{t3}, we obtain
		\[
		\begin{split}
			\mathcal{D} S^{-1}_L(s,q)&=\partial_{q_0} S^{-1}_L(s,q)+\sum_{i=1}^3 e_i\partial_{q_i} S^{-1}_L(s,q)\\
			&=-\PRes_{c,s}(q)^{-1}+\frac {q_0}2 F_L(s,q)-\frac 12 F_L(s,q) s -3\PRes_{c,s}(q)^{-1} +\frac{\unq}{2} F_L(s,q)\\
			&=-4\PRes_{c,s}(q)^{-1}-\frac 12\left( F_L(s,q)s-qF_L(s,q)  \right)\\
			&=-2\PRes_{c,s}(q)^{-1}.
		\end{split}
		\]
	\end{proof}
	\begin{remark}
Although the slice hyperholomorphic Cauchy kernel written in form I is more suitable in various cases, like for the definition of $S$-functional calculus, it does not allow easy computations of $\mathcal{D}S^{-1}_L(s,q)$.
	\end{remark}
	
	We recall the following (see \cite[Lemma 1]{B}).
	\begin{lemma}\label{be}
		For all $n\geq 1$ we have
$$
\mathcal{D}q^n=q^n \mathcal{D}=-2\sum_{k=1}^n q^{n-k}\overline q^{k-1}.
$$
	\end{lemma}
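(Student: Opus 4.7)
The plan is to prove both formulas by induction on $n$, after first establishing an auxiliary identity that governs how the Fueter operator interacts with quaternionic multiplication on the non-commutative side.

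\textbf{Step 1 (Auxiliary identity).} I would first prove the pointwise identity
\begin{equation*}
\sum_{i=1}^{3} e_i\, p\, e_i \;=\; -p - 2\overline{p}, \qquad p \in \mathbb{H}.
\end{equation*}
By $\mathbb{R}$-linearity it suffices to verify this on the basis $\{1, e_1, e_2, e_3\}$. For $p=1$ one gets $\sum e_i^2 = -3 = -1 - 2$; for $p = e_j$ a direct check using $e_i e_j e_i = -e_j$ if $i=j$ and $e_i e_j e_i = e_j$ if $i\neq j$ yields $\sum_i e_i e_j e_i = e_j = -e_j - 2(-e_j)$, as required.

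\textbf{Step 2 (Base case).} For $n=1$ we have $\partial_{q_0} q = 1$ and $\partial_{q_i} q = e_i$, so $\mathcal{D}q = 1 + \sum_i e_i^2 = -2$, and analogously $q\overleftarrow{\mathcal{D}} = -2$. The right-hand side for $n=1$ is $-2\,q^0\overline{q}^0 = -2$, matching.

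\textbf{Step 3 (Inductive step, left formula).} Assuming the formula for $n-1$, I write $q^n = q^{n-1}\cdot q$ and apply Leibniz to each partial derivative. This gives
\begin{equation*}
\mathcal{D}(q^{n-1} q) \;=\; nq^{n-1} + \bigl[\mathcal{D}(q^{n-1}) - (n-1) q^{n-2}\bigr]q + \sum_{i=1}^{3} e_i q^{n-1} e_i,
\end{equation*}
because $\partial_{q_0}q = 1$, $\partial_{q_i}q = e_i$ and $\partial_{q_0}(q^{n-1}) = (n-1)q^{n-2}$ (this last step uses that $q_0$ and $\underline{q}$ commute, hence so do $q_0$ and $q^{n-1}$). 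Applying the auxiliary identity with $p = q^{n-1}$ (and using $\overline{q^{n-1}} = \overline{q}^{\,n-1}$) collapses the last sum to $-q^{n-1} - 2\overline{q}^{\,n-1}$. After simplification,
\begin{equation*}
\mathcal{D}(q^n) \;=\; \mathcal{D}(q^{n-1})\, q - 2\overline{q}^{\,n-1}.
\end{equation*}
Plugging in the inductive hypothesis and using the key commutation $q\overline{q} = \overline{q}q$ (so $q$ commutes with every power of $\overline{q}$) yields
\begin{equation*}
\mathcal{D}(q^n) \;=\; -2\sum_{k=1}^{n-1} q^{n-k}\overline{q}^{\,k-1} - 2\overline{q}^{\,n-1} \;=\; -2\sum_{k=1}^{n} q^{n-k}\overline{q}^{\,k-1}.
\end{equation*}

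\textbf{Step 4 (Inductive step, right formula, and equality of the two).} For $q^n\overleftarrow{\mathcal{D}}$ I instead factor $q^n = q\cdot q^{n-1}$ and apply Leibniz on the right; the same auxiliary identity produces a symmetric reduction $q^n\overleftarrow{\mathcal{D}} = q\cdot(q^{n-1}\overleftarrow{\mathcal{D}}) - 2\overline{q}^{\,n-1}$, and the induction closes in the same way, again using the commutativity of $q$ with $\overline{q}$. The main obstacle throughout is precisely this non-commutativity: the naive step $\sum_i e_i q p e_i \neq q \sum_i e_i p e_i$ is what forces the clever choice of factorisation ($q^{n-1}\cdot q$ for $\mathcal{D}$ versus $q\cdot q^{n-1}$ for $\overleftarrow{\mathcal{D}}$) so that only $\sum_i e_i q^{n-1} e_i$ — controllable by Step 1 — appears. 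The equality $\mathcal{D}q^n = q^n\overleftarrow{\mathcal{D}}$ then follows since both sides evaluate to the same sum.
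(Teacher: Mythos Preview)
Your proof is correct. The paper does not actually prove this lemma; it simply recalls it with a citation to Begehr \cite[Lemma 1]{B}, so there is no in-paper argument to compare against. Your induction via the recursion $\mathcal{D}(q^n) = \mathcal{D}(q^{n-1})\,q - 2\overline{q}^{\,n-1}$, derived from the identity $\sum_{i=1}^3 e_i p e_i = -p - 2\overline{p}$ and the commutativity of $q$ with $\overline{q}$, is a clean and self-contained argument that fills in what the paper outsources.
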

	\begin{remark}
		Since
		$$ \overline{\sum_{k=1}^n q^{n-k}\overline q^{k-1}}=\sum_{k=1}^n q^{n-k}\overline q^{k-1} $$
		we deduce that $\mathcal{D}q^n$ is real.
	\end{remark}
	\begin{definition}\label{d3}
		Let $s,q\in\mathbb H$, we define the commutative $ Q$-series as
		$$-2\sum_{m=1}^{+\infty}\sum_{k=1}^m q^{m-k}\overline q^{k-1} s^{-1-m}
		\ \ \ \ \ \ {\rm and}\ \  \ \ \ \ \
		-2\sum_{m=1}^{+\infty} \sum_{k=1}^m s^{-1-m} q^{m-k}\overline q^{k-1}.$$
	\end{definition}
	\begin{remark}
		The two series in Definition \ref{d3} coincide, where they converge, since $\sum_{k=1}^m q^{m-k}\overline q^{k-1}$ is real.
	\end{remark}
	\begin{proposition}\label{p1}
		For $s,\, q\in\mathbb H$ with $|q|< |s|$, the commutative $ Q$-series converges.
	\end{proposition}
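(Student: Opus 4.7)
The plan is to establish absolute convergence of the series by a direct comparison with a convergent scalar series, exploiting the fact that quaternionic absolute value is submultiplicative and that $|q|/|s|<1$.

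First I would fix the term $q^{m-k}\overline{q}^{k-1}s^{-1-m}$ appearing in the first series of Definition \ref{d3}. Since $|\overline{q}|=|q|$ and the modulus on $\mathbb{H}$ is multiplicative, each such term satisfies
\[
|q^{m-k}\overline{q}^{k-1}s^{-1-m}|= |q|^{m-k}|q|^{k-1}|s|^{-m-1}=|q|^{m-1}|s|^{-m-1}.
\]
Since the inner sum for a fixed $m$ contains exactly $m$ terms, the triangle inequality yields the bound
\[
\Bigl|\sum_{k=1}^{m}q^{m-k}\overline{q}^{k-1}s^{-1-m}\Bigr|\le m\,|q|^{m-1}|s|^{-m-1}=\frac{m}{|s|^{2}}\Bigl(\frac{|q|}{|s|}\Bigr)^{m-1}.
\]

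Next I would compare the whole double sum to the scalar series $\sum_{m\ge 1} m x^{m-1}$ with $x:=|q|/|s|$. By hypothesis $x<1$, so this series converges (to $(1-x)^{-2}$). Consequently
\[
2\sum_{m=1}^{+\infty}\Bigl|\sum_{k=1}^{m}q^{m-k}\overline{q}^{k-1}s^{-1-m}\Bigr|\le \frac{2}{|s|^{2}}\sum_{m=1}^{+\infty}m\,x^{m-1}<+\infty,
\]
proving absolute convergence of the first series in Definition \ref{d3}. Absolute convergence in the Banach algebra $\mathbb{H}$ implies convergence, so the first series converges. For the second series, the same estimate applies verbatim since only the order of the scalar factor $s^{-1-m}$ is different; alternatively one may invoke the remark following Definition \ref{d3}, where the two series are observed to coincide because $\sum_{k=1}^{m}q^{m-k}\overline{q}^{k-1}$ is real and hence commutes with $s^{-1-m}$.

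There is no real obstacle here: the argument is a standard comparison test with the geometric-type series $\sum m x^{m-1}$, and the only point to keep track of is the count $m$ of terms in the inner sum, which forces the appearance of the factor $m$ but does not spoil convergence because $x<1$.
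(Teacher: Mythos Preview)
Your proof is correct and follows essentially the same route as the paper: both bound the modulus termwise to obtain the scalar majorant $\sum_{m\ge 1} m\,|q|^{m-1}|s|^{-1-m}$ and then observe this converges because $|q|/|s|<1$. The only cosmetic difference is that the paper applies the ratio test to this majorant, whereas you recognize it as a constant times $\sum m x^{m-1}=(1-x)^{-2}$.
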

	\begin{proof}
		To prove the convergence, it is sufficient to prove the convergence of the modulus of the series, i.e., we consider
		$$ \sum_{m=1}^{+\infty}2m |q|^{m-1}|s|^{-1-m}.$$
		The last series converges by the ratio test. Indeed, since $|q|< |s|$, we have
		$$\lim_{m\to\infty} \frac{(m+1)|q|^m|s|^{-2-m}}{m|q|^{m-1}|s|^{-1-m}}=\lim_{m\to\infty}\frac{m+1}{m}|q||s|^{-1}<1.$$
	\end{proof}
	\begin{lemma}\label{l1}
		For $q,\, s\in\mathbb H$ such that $|q|<|s|$, we have
		$$\PRes_{c,s}(q)^{-1}=\sum_{m=1}^{+\infty}\sum_{k=1}^m q^{m-k}\overline q^{k-1} s^{-1-m}=\sum_{m=1}^{+\infty} \sum_{k=1}^m s^{-1-m} q^{m-k}\overline q^{k-1}.$$
	\end{lemma}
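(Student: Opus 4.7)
The plan is to differentiate the known geometric expansions of the slice Cauchy kernels via the Fueter operator and match the outcome against the explicit formulas in Theorem \ref{t1}. First, from Theorem \ref{ts}, for $|q|<|s|$ one has the identifications
$$\sum_{n=0}^{+\infty} q^n s^{-n-1} = (s-\overline{q})\PRes_{c,s}(q)^{-1} = S_L^{-1}(s,q),$$
and analogously $\sum_{n=0}^{+\infty} s^{-n-1} q^n = S_R^{-1}(s,q)$, both series converging absolutely and locally uniformly in $q$ on $\{|q|<|s|\}$.

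Next I apply $\mathcal{D}$ termwise on the left expansion. By Lemma \ref{be}, $\mathcal{D} q^n = -2\sum_{k=1}^n q^{n-k}\overline{q}^{k-1}$, with the $n=0$ term annihilated. Termwise differentiation is legitimate because the differentiated series is exactly the commutative $Q$-series of Definition \ref{d3}, whose absolute convergence on $\{|q|<|s|\}$ is furnished by Proposition \ref{p1}; the majorant $\sum_{m\ge 1} 2m|q|^{m-1}|s|^{-1-m}$ used there dominates the series of partial derivatives in each real variable $q_0, q_1, q_2, q_3$ simultaneously. Consequently,
$$\mathcal{D} S_L^{-1}(s,q) \;=\; \sum_{n=1}^{+\infty}(\mathcal{D} q^n)\, s^{-n-1} \;=\; -2\sum_{m=1}^{+\infty}\sum_{k=1}^m q^{m-k}\overline{q}^{k-1}\, s^{-1-m}.$$

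On the other hand, Theorem \ref{t1} gives $\mathcal{D} S_L^{-1}(s,q)=-2\PRes_{c,s}(q)^{-1}$. Equating and cancelling the factor $-2$ yields the first claimed identity. The second identity is obtained symmetrically: starting from $S_R^{-1}(s,q)=\sum_{n=0}^{+\infty} s^{-n-1} q^n$, applying $\mathcal{D}$ on the right via $q^n\mathcal{D} = -2\sum_{k=1}^n q^{n-k}\overline{q}^{k-1}$ from Lemma \ref{be}, and invoking the right-hand equality of Theorem \ref{t1}.

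No substantial obstacle arises; the only technical point is the justification of termwise differentiation, which is routine given Proposition \ref{p1}. The coincidence of the two series in the statement is moreover already transparent from the fact that the inner sum $\sum_{k=1}^m q^{m-k}\overline{q}^{k-1}$ is real, as observed in the remark following Lemma \ref{be}, so it commutes with $s^{-1-m}$.
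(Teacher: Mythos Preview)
Your proof is correct and follows essentially the same route as the paper: expand $S_L^{-1}(s,q)$ as a power series via Theorem \ref{ts}, apply $\mathcal{D}$ termwise using Lemma \ref{be} with the interchange justified by Proposition \ref{p1}, and identify the result with $-2\PRes_{c,s}(q)^{-1}$ via Theorem \ref{t1}. Your treatment of the second identity and the remark on reality of the inner sum are slightly more explicit than the paper's, but the argument is the same.
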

	\begin{proof}
		We prove the first equality since the second one can be proved in a similar way. By Theorem \ref{ts}, we can expand the left Cauchy kernel as
		$$S^{-1}_L(s,q)=\sum_{m=0}^\infty q^m s^{-1-m}.$$
		By Theorem \ref{t1} and Proposition \ref{p1}, which allows to exchange the series with the Fueter operator, we have
		$$ -2\PRes_{c,s}(q)^{-1}=\mathcal{D} S_{L}^{-1}(s,q)=\sum_{m=0}^\infty \left( \mathcal{D} q^m \right)s^{-1-m}.$$
		We get the statement by applying Lemma \ref{be}.
	\end{proof}
	\begin{remark}
		Using the well-known equality
		$$(a^n-b^n)=(a-b)\sum_{k=1}^na^{n-k}b^{k-1}$$
		for $a=q$ and $b=\overline q$, and by Lemma \ref{be} we have
		$$
		\mathcal{D}q^n=\begin{cases} -2nq^{n-1} & \textrm{if $\Im(q)=0$},\\ -(\unq)^{-1}(q^n-\overline q^n) & \textrm{if $\Im(q)\ne 0$}. \end{cases}
		$$
		With this result, we can prove Theorem \ref{t1} by using the series expansion of the kernel in the following way: if $|q|< |s|$ and $\unq\neq 0$ then
		\[
		\begin{split}
			\mathcal{D} S_{L}^{-1}(s,q)&=\sum_{m=0}^\infty \left( \mathcal{D} q^m \right)s^{-1-m}
\\
&
=-(\unq)^{-1} \big(\sum_{m=1}^{\infty}q^m s^{-1-m}-\sum_{m=1}^{\infty}\overline q^ms^{-1-m} \big)\\
			&=-(\unq)^{-1}(S^{-1}_L(s,q)-S^{-1}_L(s,\overline q))
\\
&
=-(\unq)^{-1}(2\unq \mathcal Q_{c,s}(q)^{-1})
\\
&=-2\mathcal Q_{c,s}(q)^{-1},
		\end{split}
		\]
		if $|q|< |s|$ and $\unq=0$, we have
		\[
		\begin{split}
			\mathcal Q_{c,s}(q) \mathcal{D}S^{-1}_L(s,q)&=(s^2-2qs+q^2)\left(-2\sum_{m=1}^{\infty}mq^{m-1}s^{-1-m}\right)\\
			&=-2\sum_{m=1}^{\infty}mq^{m-1}s^{1-m}+4\sum_{m=1}^\infty mq^ms^{-m}-2\sum_{m=1}^\infty mq^{m+1}s^{-m-1}\\
			&=-2 \sum_{m=0}^\infty q^{m}s^{-m}+2 \sum_{m=1}^\infty m q^m s^{-m}-2\sum_{m=2}^\infty mq^{m} s^{-m}+2\sum_{m=2}^\infty q^{m} s^{-m}\\
			&=-2.
		\end{split}
		\]
	\end{remark}
Now, we study the regularity of the function $\mathcal{D}S^{-1}_L(s,q)$ in both variables.
	\begin{proposition}
		\label{ssm}
		Let $s$, $q \in \mathbb{H}$ be such that $ q \notin [s]$. The function $\mathcal{D}S^{-1}_L(s,q)$ is an intrinsic slice hyperholomorphic function in $s$.
	\end{proposition}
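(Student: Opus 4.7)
The plan is to exploit the explicit formula already provided by Theorem~\ref{t1}, namely
$$\mathcal{D}S^{-1}_L(s,q) \;=\; -2\,\PRes_{c,s}(q)^{-1} \;=\; -2\bigl(s^2-2\Re(q)s+|q|^2\bigr)^{-1},$$
so the task reduces to showing that, for fixed $q$, the map $s\mapsto(s^2-2\Re(q)s+|q|^2)^{-1}$ is intrinsic slice hyperholomorphic on $\{s\in\mathbb H : q\notin[s]\}$.

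First, I would verify that the polynomial $P(s):=s^2-2\Re(q)s+|q|^2$ is itself intrinsic slice hyperholomorphic in $s$. Writing $s=u+Jv$ with $J\in\mathbb S$, a direct expansion gives
$$P(s)=\alpha_0(u,v)+J\beta_0(u,v),$$
with
$$\alpha_0(u,v)=u^2-v^2-2\Re(q)u+|q|^2,\qquad \beta_0(u,v)=2v\bigl(u-\Re(q)\bigr),$$
both real-valued. One checks that $\alpha_0$ is even in $v$, $\beta_0$ is odd in $v$ (so the condition \eqref{eveodd} holds), and that the Cauchy--Riemann system
$$\partial_u\alpha_0-\partial_v\beta_0=0,\qquad \partial_v\alpha_0+\partial_u\beta_0=0$$
is satisfied, since $\partial_u\alpha_0=2u-2\Re(q)=\partial_v\beta_0$ and $\partial_v\alpha_0=-2v=-\partial_u\beta_0$. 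Hence $P$ is intrinsic slice hyperholomorphic in $s$.

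Next, I would pass from $P(s)$ to its reciprocal. Since $P(s)=\alpha_0+J\beta_0$ with $\alpha_0,\beta_0$ real-valued, one has
$$P(s)^{-1}=\frac{\alpha_0-J\beta_0}{\alpha_0^2+\beta_0^2}=\tilde\alpha(u,v)+J\tilde\beta(u,v),$$
where $\tilde\alpha=\alpha_0/(\alpha_0^2+\beta_0^2)$ and $\tilde\beta=-\beta_0/(\alpha_0^2+\beta_0^2)$ are again real-valued, $\tilde\alpha$ is even and $\tilde\beta$ is odd in $v$, and the denominator $\alpha_0^2+\beta_0^2$ is precisely nonzero on the set where $q\notin[s]$. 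A routine verification of the Cauchy--Riemann system for $(\tilde\alpha,\tilde\beta)$, using the Cauchy--Riemann system already established for $(\alpha_0,\beta_0)$ and the quotient rule for partial derivatives, shows that $P(s)^{-1}$ is intrinsic slice hyperholomorphic in $s$; multiplying by the real constant $-2$ preserves this property, proving the proposition.

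The only genuine point that requires attention is the closure of the class of intrinsic slice hyperholomorphic stem functions under reciprocal, but this is a direct calculation from the Cauchy--Riemann system for $(\alpha_0,\beta_0)$; everything else is algebraic. An alternative, slightly more conceptual route would be to recognize $P(s)$ as an intrinsic polynomial (its coefficients $1,-2\Re(q),|q|^2$ all lie in $\mathbb{R}$) and invoke the general fact that the class $N(U)$ is closed under reciprocal on the complement of the zero set, but I would prefer the explicit calculation above since it keeps the proof self-contained.
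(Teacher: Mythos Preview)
Your proof is correct and follows exactly the approach of the paper: invoke Theorem~\ref{t1} to reduce to the commutative pseudo Cauchy kernel $-2\PRes_{c,s}(q)^{-1}$, and then observe that its form makes it intrinsic slice hyperholomorphic in $s$. The paper's own proof is a single sentence (``This follows by Theorem~\ref{t1} and the shape of the commutative pseudo Cauchy kernel''), whereas you have spelled out the verification of the Cauchy--Riemann system for both $P(s)$ and its reciprocal; this is a welcome expansion but not a different route.
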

	\begin{proof}
		This follows by Theorem \ref{t1} and the shape of the commutative pseudo Cauchy kernel.
	\end{proof}
	
	\begin{remark}
		\label{noq}
		The function $\mathcal{D}(S^{-1}_L(s,q))$ is not left slice hyperholomorphic in the variable $q$. Indeed, let $q=u+Iv$ for an arbitrary $I\in \mathbb S$ then $\mathcal{Q}_{c,s}(q)^{-1}=(s^2-2us+u^2+v^2)^{-1}$ and we have the following two relations
		\[
		\frac{\partial}{\partial u} \mathcal Q_{c,s}(u+Iv)^{-1}=-(-2s+2u)\mathcal Q_{c,s}(u+Iv)^{-2}
		\]
		and
		\[
		\frac{\partial}{\partial v} \mathcal Q_{c,s}(u+Iv)^{-1}=-2v\mathcal Q_{c,s}(u+Iv)^{-2},
		\]
		which yield
		\[
		\begin{split}
			\left(\frac{\partial}{\partial u}+I\frac{\partial}{\partial v}\right)\mathcal Q_{c,s}(u+Iv)^{-1}&=-(-2s+2u+2Iv)\mathcal Q_{c,s}(u+Iv)^{-2}\\
			&=2(s-q)\mathcal Q_{c,s}(u+Iv)^{-2}=-\frac 12 F_L(s,\overline q) .
		\end{split}
		\]
	\end{remark}
The function $\mathcal{D}S^{-1}_L(s,q)$ turns out to be harmonic in $q$, as proved in the following result.
	
	\begin{proposition}
		\label{harmo}
		Let $s$, $q \in \mathbb{H}$ be such that $ q \notin [s]$. Then the function $\mathcal{D}S^{-1}_L(s,q)$ is harmonic in the real components of $q$.
	\end{proposition}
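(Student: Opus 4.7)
The plan is to leverage the factorization $\Delta=\mathcal{D}\overline{\mathcal{D}}=\overline{\mathcal{D}}\mathcal{D}$ rather than to attack the function $\mathcal{Q}_{c,s}(q)^{-1}$ head-on. By Theorem \ref{t1}, we have $\mathcal{D}S^{-1}_L(s,q)=-2\mathcal{Q}_{c,s}(q)^{-1}$, so proving harmonicity in $q$ amounts to showing
$$\Delta\bigl(\mathcal{D}S^{-1}_L(s,q)\bigr)=0\qquad\text{on}\ \mathbb H\setminus[s].$$

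First I would observe that the Laplace operator $\Delta=\sum_{\ell=0}^{3}\partial_{q_\ell}^2$ is a scalar differential operator with constant coefficients, while $\mathcal{D}=\partial_{q_0}+\sum_{i=1}^3 e_i\partial_{q_i}$ merely prepends the (constant) imaginary units $e_i$ to scalar partials. Since each $e_i$ commutes with every scalar operator and all partial derivatives with respect to $q_0,q_1,q_2,q_3$ commute on smooth functions, one obtains at once $\Delta\mathcal{D}=\mathcal{D}\Delta$; equivalently, from $\Delta=\overline{\mathcal{D}}\mathcal{D}$ we may write $\Delta\mathcal{D}=\overline{\mathcal{D}}\mathcal{D}\mathcal{D}=\mathcal{D}\overline{\mathcal{D}}\mathcal{D}=\mathcal{D}\Delta$. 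The kernel $S^{-1}_L(s,q)$ is real analytic in the four real components of $q$ on $\mathbb H\setminus[s]$, so this commutation can be freely applied to it.

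Combining these two ingredients gives the one-line computation
$$\Delta\bigl(\mathcal{D}S^{-1}_L(s,q)\bigr)=\mathcal{D}\bigl(\Delta S^{-1}_L(s,q)\bigr)=\mathcal{D}F_L(s,q)=0,$$
where the second equality is Definition \ref{FK} and the last equality is Proposition \ref{Laplacian}, which asserts that $\Delta S^{-1}_L(s,q)$ is left Fueter regular in $q$. This yields the claim.

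I do not expect a real obstacle here: the argument is essentially a commutativity observation combined with two results already established in the excerpt. If one preferred a self-contained verification avoiding the appeal to Proposition \ref{Laplacian}, a backup plan would be to note that $\mathcal{Q}_{c,s}(q)^{-1}=((s-q_0)^2+r^2)^{-1}$ with $r=|\underline q|$ depends on $q$ only through $(q_0,r)$, so $\Delta$ reduces to the axially symmetric form $\partial_{q_0}^2+\partial_r^2+\tfrac{2}{r}\partial_r$; a short direct calculation (using that $(s-q_0)^2$ and $r^2$ commute, so powers of $\mathcal{Q}_{c,s}(q)$ commute with their $q_0$- and $r$-derivatives) then gives $\Delta\mathcal{Q}_{c,s}(q)^{-1}=0$ and recovers the same conclusion.
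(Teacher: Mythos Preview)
Your proof is correct and follows essentially the same route as the paper: commute $\Delta$ through $\mathcal{D}$ (since the Laplacian has scalar coefficients), then invoke Proposition~\ref{Laplacian} to conclude $\mathcal{D}F_L(s,q)=0$. The paper's proof is the same one-line computation $\Delta\mathcal{D}S^{-1}_L(s,q)=\mathcal{D}\Delta S^{-1}_L(s,q)=\mathcal{D}F_L(s,q)=0$; your additional remarks on why the commutation holds and the backup direct calculation are fine but not needed.
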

	\begin{proof}
		The result follows by the facts that the Laplacian is a real operator, thus it commutes with $\mathcal D$, and by Proposition \ref{Laplacian}. Indeed
		$$ \Delta \mathcal{D} S^{-1}_L(s,q)=\mathcal{D} \Delta S^{-1}_L(s,q)= \mathcal{D} F_{L}(s,q)=0.$$
	\end{proof}
	Finally as a consequence of the definition of the $F$-kernel we have:
	\begin{lemma}
\label{anti}
		Let $s$, $q \in \mathbb{H}$ be such that $q \notin [s]$, then
		$$ \mathcal{D}^2 S^{-1}_{L}(s,q)=F_{L}(s, \bar{q}).$$
	\end{lemma}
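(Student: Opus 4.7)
The plan is to apply Theorem \ref{t1} once to reduce the problem to a direct computation on the commutative pseudo Cauchy kernel $\PRes_{c,s}(q)^{-1}$, and then compare the result with the explicit expression of $F_L(s,\bar q)$ from Definition \ref{FK}.

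First I would invoke Theorem \ref{t1} to write
$$\mathcal{D}^2 S_L^{-1}(s,q) \;=\; \mathcal{D}\bigl(\mathcal{D} S_L^{-1}(s,q)\bigr) \;=\; \mathcal{D}\bigl(-2\,\PRes_{c,s}(q)^{-1}\bigr) \;=\; -2\,\mathcal{D}\,\PRes_{c,s}(q)^{-1},$$
so the whole task reduces to computing $\mathcal{D}\,\PRes_{c,s}(q)^{-1}$. Writing $\PRes_{c,s}(q)=s^2-2q_0 s+q_0^2+q_1^2+q_2^2+q_3^2$ and noting that this element lies in $\mathbb{C}_{J_s}$ (and so in particular commutes with $s$), I would differentiate component by component, exactly as in the proof of Theorem \ref{t1}:
$$\partial_{q_0}\PRes_{c,s}(q)^{-1} = -(2q_0-2s)\PRes_{c,s}(q)^{-2}, \qquad \partial_{q_i}\PRes_{c,s}(q)^{-1} = -2q_i\,\PRes_{c,s}(q)^{-2},\ \ i=1,2,3.$$
Summing after multiplication by the $e_i$ on the left gives
$$\mathcal{D}\,\PRes_{c,s}(q)^{-1} \;=\; 2\bigl(s-q_0-\unq\bigr)\PRes_{c,s}(q)^{-2} \;=\; 2(s-q)\PRes_{c,s}(q)^{-2}.$$
Hence $\mathcal{D}^2 S_L^{-1}(s,q)=-4(s-q)\PRes_{c,s}(q)^{-2}$.

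To finish, I would observe that $\PRes_{c,s}$ depends on $q$ only through $\Re(q)$ and $|q|$, so $\PRes_{c,s}(\bar q)=\PRes_{c,s}(q)$, and of course $\overline{\bar q}=q$. Therefore by Definition \ref{FK},
$$F_L(s,\bar q) \;=\; -4(s-\overline{\bar q})\,\PRes_{c,s}(\bar q)^{-2} \;=\; -4(s-q)\,\PRes_{c,s}(q)^{-2},$$
which matches the expression just obtained and proves the claim.

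I do not expect a serious obstacle here, as the argument is just a one-step extension of the computation already carried out in Theorem \ref{t1}; the only point that deserves a little care is the non-commutativity of $q$ with $\PRes_{c,s}(q)^{-2}$, which is harmless because $\PRes_{c,s}(q)$ lies in the commutative slice $\mathbb{C}_{J_s}$ and the factor $(s-q)$ appears already on the correct side from the sum of the partial derivatives, so no reordering is required.
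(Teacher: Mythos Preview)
Your proof is correct and follows essentially the same approach as the paper: both apply Theorem \ref{t1} to reduce to computing $\mathcal{D}\,\PRes_{c,s}(q)^{-1}$, carry out the identical partial-derivative computation to obtain $2(s-q)\PRes_{c,s}(q)^{-2}$, and conclude. Your explicit verification that $F_L(s,\bar q)=-4(s-q)\PRes_{c,s}(q)^{-2}$ via $\PRes_{c,s}(\bar q)=\PRes_{c,s}(q)$ is a small clarifying addition over the paper, which simply asserts the final identification.
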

	\begin{proof}
		By Theorem \ref{t1} we have
		\begin{equation}
			\label{double}
			\mathcal{D}^2 S^{-1}_L(s,q)=-2\mathcal{D} \mathcal{Q}_{c,s}(q)^{-1}.
		\end{equation}
		Firstly, we apply the derivatives with respect to $q_0$ and $q_i$, with $i=1,2,3$ to the commutative pseudo  Cauchy kernel
		$$\frac{\partial}{\partial q_{0}} \mathcal{Q}_{c,s}(q)^{-1}=-2 (-s+q_0)(s^{2}-2q_0s+|q|^2)^{-2},$$
		and for $i=1,2,3$ we get
		$$\frac{\partial}{\partial q_{i}} \mathcal{Q}_{c,s}(q)^{-1}=-2 q_i(s^{2}-2q_0s+|q|^2)^{-2}.$$
		Thus we obtain
		\begin{eqnarray*}
			\mathcal{D} \mathcal{Q}_{c,s}(q)^{-1}&=& \frac{\partial }{\partial q_0}\mathcal{Q}_{c,s}(q)^{-1}+ \sum_{i=1}^3 e_i \frac{\partial }{\partial q_i}\mathcal{Q}_{c,s}(q)^{-1}\\
			&=& -2(-s+q_0+ \underline{q}) (s^2-2q_0s+|q|^2)^{-2}\\
			&=&2 (s-q)(s^2-2q_0s+|q|^2)^{-2}.
		\end{eqnarray*}
		Therefore by \eqref{double} we get
		$$
\mathcal{D}^2 S^{-1}_L(s,q)=-4(s-q)(s^2-2q_0s+|q|^2)^{-2}=F_{L}(s, \bar{q}).
$$
	\end{proof}
\begin{remark}
By Proposition \ref{Laplacian} it is clear that the function $F_L(s, \bar{q})$ is axially anti-monogenic. This observation together with Lemma \ref{anti} imply the construction of the following diagram
\begin{equation}
	\label{anti2}
	\begin{CD}
		\textcolor{black}{\mathcal{O}(D)}  @>T_{F1}>> \textcolor{black}{SH(\Omega_D)}  @>\ \   \mathcal{D}>>\textcolor{black}{AH(\Omega_D)}
		@>\ \   \mathcal{D} >>\textcolor{black}{\overline{AM(\Omega_D)}},
	\end{CD}
\end{equation}
where $\overline{AM(\Omega_D)}$ is the set of axially anti-monogenic functions.
In order to avoid this set of functions in the constructions like the one in \eqref{anti2} we impose that the composition of the operators that lie up the arrows must be the Fueter map (in the case of this paper $ \Delta$). This is very important when we increase the dimension of the algebra, see \cite{FIVEDIM}.

\end{remark}

We observe that if we set $q=u+Iv$ and we apply the 2-dimensional Laplacian
	$$ \Delta_2:= \overline{\partial_I} \partial_I = \left( \frac{\partial}{\partial u}- I \frac{\partial }{\partial v}\right)\left( \frac{\partial}{\partial u}+ I \frac{\partial }{\partial v}\right),$$
to the commutative pseudo Cauchy kernel we get itself elevated to a bigger power.
	\begin{lemma}
		Let $s$, $q=u+Iv \in \mathbb{H}$ be such that $q \notin [s]$, then
		$$ \Delta_2 \mathcal{Q}_{c,s}(q)^{-1}=4 \mathcal{Q}_{c,s}(q)^{-2}.$$
	\end{lemma}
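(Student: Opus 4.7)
The plan is to verify the identity by direct computation. The key observation that makes everything work is that, for a fixed $s \in \mathbb{H}$ and real parameters $u, v$, the expression $g(u,v) := s^2 - 2us + u^2 + v^2 = \mathcal{Q}_{c,s}(q)$ lies in the commutative subalgebra $\mathbb{R}[s] \subseteq \mathbb{H}$ (which equals $\mathbb{C}_{J_s}$ when $s \notin \mathbb{R}$). Consequently $g$, its powers $g^{-n}$, and its partial derivatives $\partial_u g = -2s + 2u$ and $\partial_v g = 2v$ all mutually commute, so the chain rule for $\partial_u(g^{-n})$ and $\partial_v(g^{-n})$ may be applied without worrying about the order of factors.

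First I would reduce the operator: for any sufficiently smooth $\mathbb{H}$-valued function $f(u,v)$ one has
\[
\overline{\partial_I}\partial_I f = \partial_u^2 f - I\partial_v\partial_u f + I\partial_u\partial_v f - I^2\partial_v^2 f = \partial_u^2 f + \partial_v^2 f,
\]
since $I^2 = -1$ and mixed partials commute. So the task becomes computing $(\partial_u^2 + \partial_v^2) g^{-1}$.

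Next I would compute the first derivatives using commutativity in $\mathbb{R}[s]$:
\[
\partial_u(g^{-1}) = -2(u-s)\, g^{-2}, \qquad \partial_v(g^{-1}) = -2v\, g^{-2}.
\]
Differentiating once more yields
\[
\partial_u^2(g^{-1}) = -2 g^{-2} + 8(u-s)^2 g^{-3}, \qquad \partial_v^2(g^{-1}) = -2 g^{-2} + 8 v^2 g^{-3}.
\]
Adding and using the crucial algebraic identity $(u-s)^2 + v^2 = u^2 - 2us + s^2 + v^2 = g$ (valid because $u$ is real and therefore commutes with $s$), the $g^{-3}$ terms collapse:
\[
\Delta_2(g^{-1}) = -4 g^{-2} + 8 g\cdot g^{-3} = 4 g^{-2},
\]
which is the desired identity. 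The only point requiring care is the bookkeeping of the non-commuting factors when differentiating $g^{-n}$, but this is harmless because everything stays in the commutative subalgebra $\mathbb{R}[s]$; there is no substantive obstacle to overcome.
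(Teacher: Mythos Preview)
Your proof is correct. Both your argument and the paper's are direct computations, but you take a genuinely cleaner route: you observe at the outset that $\overline{\partial_I}\partial_I = \partial_u^2 + \partial_v^2$ on smooth functions (since $I$ is a constant with $I^2=-1$ and mixed partials commute), and then every quantity you differentiate---$g$, $\partial_u g$, $\partial_v g$, and $g^{-n}$---lives in the commutative subalgebra $\mathbb{R}[s]$, so ordinary one-variable calculus rules apply and the identity $(u-s)^2+v^2=g$ closes the computation in two lines. The paper instead applies $\partial_I$ first (quoting the earlier remark that $\partial_I\mathcal{Q}_{c,s}(q)^{-1}=2(s-q)\mathcal{Q}_{c,s}(q)^{-2}$), obtains an expression involving the factor $s-u-Iv$ in which $I$ and $s$ need not commute, and then applies $\overline{\partial_I}$ with careful bookkeeping of these non-commuting factors. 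Your reduction sidesteps that bookkeeping entirely; the paper's approach has the minor advantage of reusing the previously computed $\partial_I\mathcal{Q}_{c,s}(q)^{-1}$, but at the cost of a longer and more delicate calculation.
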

	\begin{proof}
		We set $q=u+Iv$, $I \in \mathbb{S}$. By Remark \ref{noq} we know that
		$$ \partial_I \mathcal{Q}_{c,s}(u+Iv)^{-1}=2(s-u-Iv) \mathcal{Q}_{c,s}(q)^{-2}.$$
		Now, we have
		$$ \frac{\partial}{\partial u}\partial_I \mathcal{Q}_{c,s}(u+Iv)^{-1}=-2 \mathcal{Q}_{c,s}(u+Iv)^{-2}-4(s-u-Iv) \mathcal{Q}_{c,s}(u+Iv)^{-3}(-2s+2u),$$
		and
		$$ \frac{\partial}{\partial v}\partial_I \mathcal{Q}_{c,s}(u+Iv)^{-1}=-2I \mathcal{Q}_{c,s}(u+Iv)^{-2}-8(s-u-Iv) \mathcal{Q}_{c,s}(u+Iv)^{-3} v.$$
		By definition of the $2$-dimensional laplacian and since the variable $s$ commute with $\mathcal Q_{c,s}(u+Iv)$, we get
		\begin{eqnarray*}
			\Delta_2 \mathcal{Q}_{c,s}(q)^{-1}&=& \left(\frac{\partial}{\partial u}-I \frac{\partial}{\partial v} \right)\partial_I \mathcal{Q}_{c,s}(u+Iv)^{-1}\\
			&=& -4(s-u-Iv) \mathcal{Q}_{c,s}(u+Iv)^{-3}(-2s+2u)+8I(s-u-Iv) \mathcal{Q}_{c,s}(u+Iv)^{-3} v\\
			&& -4\mathcal{Q}_{c,s}(u+Iv)^{-2}\\
			&=& 8(s-u-Iv)(s-u)\mathcal{Q}_{c,s}(u+Iv)^{-3}+8I(s-u-Iv)v\mathcal{Q}_{c,s}(u+Iv)^{-2}\\
			&&-4\mathcal{Q}_{c,s}(u+Iv)^{-2}\\
			&=& 8(s^2-su-us+u^2-Isv+Iuv+Isv-Iuv+v^2)\mathcal{Q}_{c,s}(u+Iv)^{-3}\\
			&& -4\mathcal{Q}_{c,s}(u+Iv)^{-2}\\
			&=& 8\mathcal Q_{c,s}(u+Iv)\mathcal{Q}_{c,s}(u+Iv)^{-3}-4\mathcal{Q}_{c,s}(u+Iv)^{-2}\\
			&=& 8 \mathcal{Q}_{c,s}(u+Iv)^{-2}-4\mathcal{Q}_{c,s}(u+Iv)^{-2}=4\mathcal{Q}_{c,s}(u+Iv)^{-2}.
		\end{eqnarray*}
	\end{proof}
We conclude this section with an integral representation of axially harmonic functions that will allow us to define the harmonic functional calculus based on the $S$-spectrum.

	\begin{theorem}[Integral representation of axially harmonic functions]
		\label{qthe}
		Let $W \subset \mathbb{H}$ be an open set. Let $U$ be a slice Cauchy domain such that $\overline{U} \subset W$. Then for $J \in \mathbb{S}$ and $ds_J=ds(-J)$ we have:
		\begin{itemize}
			\item[1)]If $f \in SH_L(W)$, then the function $ \tilde{f}(q)=\mathcal{D} f(q)$ is harmonic and it admits the following integral representation
			\begin{equation}
				\label{qform}
				\tilde{f}(q)=- \frac{1}{\pi} \int_{\partial(U \cap \mathbb{C}_J)} \mathcal{Q}_{c,s}(q)^{-1}ds_J f(s),\ \ \ \ q\in  U.
			\end{equation}
			\item[2)] If $f \in SH_R(W)$, then the function $ \tilde{f}(q)= f(q)\mathcal{D}$ is harmonic and it admits the following integral representation
			\begin{equation}
				\tilde{f}(q)=- \frac{1}{\pi} \int_{\partial(U \cap \mathbb{C}_J)} f(s) ds_J \mathcal{Q}_{c,s}(q)^{-1},\ \ \ \ q\in  U.
			\end{equation}
		\end{itemize}
		The integrals depend neither on $U$ nor on the imaginary unit $J  \in \mathbb{S}$.
	\end{theorem}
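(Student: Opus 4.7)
The plan is to obtain the formula by differentiating the slice hyperholomorphic Cauchy formula under the integral sign and invoking the kernel identity of Theorem \ref{t1}. Since parts (1) and (2) are symmetric, I would spell out only (1) and note that (2) follows by the identical argument applied to the right Cauchy formula \eqref{Cauchyright} together with the right version of Theorem \ref{t1}. Starting from
\[
f(q)=\frac{1}{2\pi}\int_{\partial(U\cap\mathbb{C}_J)} S_L^{-1}(s,q)\,ds_J\,f(s),\qquad q\in U,
\]
I would apply $\mathcal{D}$ in the variable $q$ to both sides. Because $q\in U$ and $s$ ranges over the compact set $\partial(U\cap\mathbb{C}_J)$, one has $q\notin[s]$ uniformly on the contour, so the kernel $S_L^{-1}(s,q)$ and all its $q$-partials are jointly continuous and uniformly bounded on compact subsets of $U$; differentiation under the integral sign is therefore legitimate. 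Theorem \ref{t1} gives $\mathcal{D}S_L^{-1}(s,q)=-2\mathcal{Q}_{c,s}(q)^{-1}$, and substitution produces exactly \eqref{qform}.

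For the harmonicity of $\tilde{f}$, I would use that the Laplacian has scalar coefficients and that $\Delta=\mathcal{D}\overline{\mathcal{D}}=\overline{\mathcal{D}}\mathcal{D}$, so $\Delta$ commutes with $\mathcal{D}$. Then
\[
\Delta\tilde{f}=\Delta\mathcal{D}f=\mathcal{D}\Delta f,
\]
and by the Fueter mapping theorem (Theorem \ref{F1}) the function $\Delta f$ is Fueter regular, i.e., $\mathcal{D}\Delta f=0$. A slicker alternative is to differentiate under the integral sign and invoke Proposition \ref{harmo}, which states that the kernel $\mathcal{D}S_L^{-1}(s,q)$ is already harmonic in $q$.

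The remaining point is independence of the slice Cauchy domain $U$ and of the imaginary unit $J\in\mathbb{S}$. Here I would lean on Proposition \ref{ssm}: the kernel $s\mapsto\mathcal{Q}_{c,s}(q)^{-1}$ is intrinsic slice hyperholomorphic in $s$ on $\mathbb{H}\setminus[q]$. Intrinsicity is what lets the product $\mathcal{Q}_{c,s}(q)^{-1}\,ds_J\,f(s)$ behave compatibly with the slice structure, and then the standard slice-Cauchy homotopy argument based on Theorem \ref{CIT}, used verbatim as in the derivation of the slice Cauchy formula and of the Fueter integral theorem (Theorem \ref{Fueter}), shows that the integral is invariant under admissible deformations of $\partial(U\cap\mathbb{C}_J)$ in $W\setminus\{q\}$ and under a change of $J$.

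I expect the only delicate step to be the last one, namely the $J$-independence: one must check that the intrinsic slice hyperholomorphicity of the kernel in $s$ interacts correctly with the measure $ds_J$ so that the Cauchy integral theorem applies. The other steps are immediate consequences of Theorem \ref{t1}, of the Fueter mapping theorem, and of the routine justification of differentiation under the integral sign on a compact contour kept away from $[q]$.
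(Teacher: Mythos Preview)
Your argument is correct and matches the paper's proof almost verbatim: write the Cauchy formula, apply $\mathcal{D}$ under the integral using Theorem~\ref{t1}, and invoke Proposition~\ref{harmo} for harmonicity. One simplification: the independence from $U$ and $J$ is not the delicate step you flag---since the Cauchy formula for $f(q)$ in Theorem~\ref{Cauchy} is already independent of $U$ and $J$, and $\mathcal{D}$ acts only in $q$, the independence is inherited for free without any separate analysis of the kernel in~$s$. (Also, your first harmonicity argument should cite Theorem~\ref{Fueter} rather than Theorem~\ref{F1}, since the latter is stated only for intrinsic functions; your alternative via Proposition~\ref{harmo} is exactly what the paper does.)
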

	\begin{proof}
		We prove only the first statement because the other proof is similar.
		We can write the function $f$ by using the Cauchy formula for slice hyperholomorphic functions, see Theorem \ref{Cauchy}. Now, by applying the left Fueter operator to $f(q)$ and by Theorem \ref{t1} we get
		$$ \tilde{f}(q)=\mathcal{D} f(q)=\frac{1}{2 \pi} \int_{\partial (U \cap \mathbb{C}_J)} \mathcal{D} S^{-1}_L(s,q)ds_J f(s)=- \frac{1}{\pi} \int_{\partial(U \cap \mathbb{C}_J)} \mathcal{Q}_{c,s}(q)^{-1}ds_J f(s).$$
		Since $\tilde{f}(q)=\mathcal{D} f(q)$ and by Proposition \ref{harmo}, it is immediately verified that $ \tilde{f}(q)$ is a harmonic function. The independence of integral in \eqref{qform} from the set $U$ and the imaginary unit $J \in \mathbb{S}$ follows by the Cauchy formula. 
	\end{proof}
In this section we have described the central part of the following diagram 
\begin{equation}
\label{harm}
	\begin{CD}
		\textcolor{black}{\mathcal{O}(D)}  @>T_{F1}>> \textcolor{black}{SH(\Omega_D)}  @>\ \   \mathcal{D}>>\textcolor{black}{AH(\Omega_D)}
		@>\ \   \overline{\mathcal{D}} >>\textcolor{black}{AM(\Omega_D)}.
	\end{CD}
\end{equation}	
\begin{remark}
In the quaternionic setting it is possible to develop another digram like the one in \eqref{harm}.  This comes from the factorization $ \Delta=\mathcal{D} \overline{\mathcal{D}}$ and is called second fine structure in the quaternionic setting, see Definition \ref{fine}. The set of functions that lies between the set of slice hyperholomorphic functions and the set of axially monogenic functions is the set of axially  polyanalytic functions of order 2, for more details see \cite{DP}.
\end{remark}

	\section{The harmonic functional calculus on the $S$-spectrum}

	In this section we introduce the harmonic functional calculus on the $S$-spectrum, which is based on the integral representation of axially harmonic functions. Recall that $X$ denotes a two-sided quaternionic Banach space.
	
We give meaning to the substitution of the variable $q$ with the operator $T$ in the power series introduced in Definition \ref{d3}.
	\begin{definition}\label{dseries}
		Let $T=T_0+\sum_{i=1}^3e_i T_i\in\mathcal B\mathcal C(X)$, $s\in\mathbb H$, we formally define the commutative pseudo $S$-resolvent series as
		$$ -2\sum_{m=1}^\infty  \sum_{k=1}^m T^{m-k} \overline T^{k-1} s^{-1-m}
		\ \ \ \ \ {\rm and}\ \ \ \ \
		-2\sum_{m=1}^\infty  \sum_{k=1}^m s^{-1-m} T^{m-k} \overline T^{k-1}.$$
	\end{definition}
	\begin{remark}
		The two series in Definition \ref{dseries} coincide, where they converge.
	\end{remark}
	\begin{proposition}\label{p2}
		\label{series}
		Let $T=T_0+\sum_{i=1}^3 e_iT_i\in\mathcal B\mathcal C(X)$, $s\in\mathbb H$ and $\|T\|<|s|$, the series in the Definition \ref{dseries} converges. Moreover, we have
		\begin{equation}\label{es1}
			\sum_{m=1}^\infty  \sum_{k=1}^m T^{m-k} \overline T^{k-1} s^{-1-m}=\sum_{m=1}^\infty  \sum_{k=1}^m s^{-1-m} T^{m-k} \overline T^{k-1}=\mathcal Q_{c,s} (T)^{-1}.
		\end{equation}
	\end{proposition}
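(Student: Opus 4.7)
The plan is to proceed in three steps: establish norm convergence, show that the two orderings of the coefficients with $s^{-1-m}$ agree, and then identify the common sum with $\mathcal Q_{c,s}(T)^{-1}$.

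For convergence, I would mirror the scalar ratio-test argument of Proposition \ref{p1}. Since the components $T_0,T_1,T_2,T_3$ commute, $\overline T$ lies in the same commutative real subalgebra as $T$, and one obtains the bound $\|T^{m-k}\overline T^{k-1}\|\le K\|T\|^{m-1}$ for some constant $K=K(T)$. Hence the general term has operator norm at most $Km\|T\|^{m-1}|s|^{-1-m}$, and absolute convergence under $\|T\|<|s|$ follows verbatim as in Proposition \ref{p1}. The equality of the two orderings then hinges on the observation that each partial sum $A_m(T):=\sum_{k=1}^m T^{m-k}\overline T^{k-1}$ is a polynomial with real coefficients in the commuting real operators $T_0,\dots,T_3$, hence commutes with every quaternion. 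This is the operator counterpart of the remark following Lemma \ref{be} that $\mathcal{D}q^n$ is real, and may be proved by induction using $T+\overline T=2T_0$, $T\overline T=T_0^2+T_1^2+T_2^2+T_3^2$, and the commutativity of $T$ with $\overline T$. Once this is established, $A_m(T)s^{-1-m}=s^{-1-m}A_m(T)$ and the two series are literally the same.

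To identify the common value with $\mathcal Q_{c,s}(T)^{-1}$, I would transcribe the scalar identity of Lemma \ref{l1} to the operator setting. Concretely, one multiplies the series on the left by $\mathcal Q_{c,s}(T)=s^2\mathcal I-s(T+\overline T)+T\overline T$, uses that the real coefficients $A_m(T)$ commute with $s$, and verifies by telescoping that the product equals $\mathcal I$; the crucial algebraic identity $T^m+\overline T^m-A_{m+1}(T)=-T\overline T\cdot A_{m-1}(T)$, which holds because $T$ and $\overline T$ commute, is what makes the shifted indices line up. Equivalently, one may regard Lemma \ref{l1} as a formal power-series identity in two commuting variables and invoke the substitution $q\mapsto T$, $\overline q\mapsto \overline T$; this substitution is legitimate precisely because the components commute and the series converges as established in the first step.

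The main obstacle is the bookkeeping in the telescoping argument; conceptually, however, the entire proposition is the operator transcription of Lemma \ref{l1} and Proposition \ref{p1}, made possible by the commutativity of the components of $T$.
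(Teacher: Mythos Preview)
Your proposal is correct and follows essentially the same route as the paper: ratio-test convergence as in Proposition \ref{p1}, the observation that each $A_m(T)=\sum_{k=1}^m T^{m-k}\overline T^{k-1}$ is a real operator (a polynomial in the commuting $T_0,\dots,T_3$) and hence commutes with $s$, and verification that $\mathcal Q_{c,s}(T)$ times the series telescopes to $\mathcal I$. The only cosmetic difference is that the paper carries out the telescoping by explicit index shifts on the four sums arising from $(s^2-s(T+\overline T)+T\overline T)\sum A_m s^{-1-m}$, whereas you package the same cancellation into the recurrence $A_{m+1}-(T+\overline T)A_m+T\overline T A_{m-1}=0$ (equivalent to your stated identity); both arguments are the same computation.
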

	
	\begin{proof}
		For the convergence of the series it is sufficient to prove the convergence of the series of the operator norm:
		\begin{equation}\label{s1}
			\sum_{m=1}^\infty  m\|T\|^{m-1} |s|^{-1-m} .
		\end{equation}
		Since
		$$\lim_{m\to\infty} \frac{(m+1)\|T\|^m|s|^{-2-m}}{m\|T\|^{m-1}|s|^{-1-m}}=\lim_{m\to\infty}\frac{m+1}{m}\|T\||s|^{-1}<1,$$
		by the ratio test the series \eqref{s1} is convergent. To prove the equality \eqref{es1}, we show that
		\begin{equation}\label{es1bis}
			\mathcal Q_{c,s}(T)\left(\sum_{m=1}^\infty  \sum_{k=1}^m T^{m-k} \overline T^{k-1} s^{-1-m}\right)=\left(\sum_{m=1}^\infty  \sum_{k=1}^m T^{m-k} \overline T^{k-1} s^{-1-m}\right)\mathcal Q_{c,s}(T)=\mathcal I .
		\end{equation}
		The first equality in \eqref{es1bis} is a consequence of the following facts: for any positive integer $m$ the operator $\sum_{k=1}^m T^{m-k}\overline T^{k-1}$
does not contain any imaginary units, so it is real and then it commutes with any power of $s$. Secondly, the components of $T$ are commuting among them and the operator $\mathcal Q_{c,s}(T)$, see Definition \ref{QCS}, can be written as: $s^2\mathcal I-2s T_0+\sum_{i=0}^3 T_i^2$.

Now we prove the second equality in \eqref{es1bis}. First we observe that
		\[
		\begin{split}
			&\left(\sum_{m=1}^\infty  \sum_{k=1}^m T^{m-k} \overline T^{k-1} s^{-1-m}\right)\mathcal Q_{c,s}(T)=\left(\sum_{m=1}^\infty  \sum_{k=1}^m T^{m-k} \overline T^{k-1} s^{-1-m}\right)(s^2-s(T+\overline T)+T\overline T)\\
			&= \sum_{m=1}^\infty  \sum_{k=1}^m T^{m-k}\overline{T}^{k-1}s^{1-m}- \sum_{m=1}^\infty  \sum_{k=1}^m T^{m+1-k}\overline T^{k-1}s^{-m} - \sum_{m=1}^\infty \sum_{k=1}^m T^{m-k}\overline T^k s^{-m}+\\
			&\, \, \, \, \, \,+ \sum_{m=1}^{\infty} \sum_{k=1}^m T^{m-k+1} \overline{T}^{k}s^{-1-m}.
		\end{split}
		\]
		Making the change of index $m'=1+m$ in the second and fourth series, we have
		\[
		\begin{split}
			&\left(\sum_{m=1}^\infty  \sum_{k=1}^m T^{m-k} \overline T^{k-1} s^{-1-m}\right)\mathcal Q_{c,s}(T)=\\
			&= \sum_{m=1}^\infty  \sum_{k=1}^m T^{m-k}\overline{T}^{k-1}s^{1-m}- \sum_{m'=2}^\infty  \sum_{k=1}^{m'-1} T^{m'-k}\overline T^{k-1}s^{1-m'} - \sum_{m=1}^\infty \sum_{k=1}^m T^{m-k}\overline T^k s^{-m}\\
			&\, \, \, \, \, \,+\sum_{m'=2}^\infty  \sum_{k=1}^{m'-1} T^{m'-k} \overline T^k s^{-m'}\\
			&=\mathcal{I}+\sum_{m=2}^\infty  \sum_{k=1}^m T^{m-k}\overline{T}^{k-1}s^{1-m}- \sum_{m'=2}^\infty  \sum_{k=1}^{m'-1} T^{m'-k}\overline T^{k-1}s^{1-m'} +\\
			&\, \, \, \, \, \,-\overline{T}s^{-1}- \sum_{m=2}^\infty \sum_{k=1}^m T^{m-k}\overline T^k s^{-m}+\sum_{m'=2}^\infty  \sum_{k=1}^{m'-1} T^{m'-k} \overline T^k s^{-m'}.\\
		\end{split}
		\]
		Simplifying the opposite terms in the first and second series and in the third and fourth series, we finally get
		\[
		\left(\sum_{m=1}^\infty  \sum_{k=1}^m T^{m-k} \overline T^{k-1} s^{-1-m}\right)\mathcal Q_{c,s}(T)=\mathcal I+\sum_{m=2}^\infty\overline T^{m-1}s^{1-m}-\sum_{m=2}^\infty\overline T^{m-1}s^{1-m}=\mathcal I.\\
		\]
	\end{proof}

	\begin{lemma}
		Let $T \in \mathcal B\mathcal C(X)$. The commutative pseudo $S$-resolvent operator $ \mathcal Q_{c,s}(T)^{-1}$ is a $ \mathcal{B}(X)$-valued right and left slice hyperholomorphic function of the variable $s$ in $\rho_S(T)$.
	\end{lemma}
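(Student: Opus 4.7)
The plan is to exhibit $\mathcal Q_{c,s}(T)^{-1}$ explicitly in the form $\tilde\alpha(u,v)+J\tilde\beta(u,v)$ with $s=u+Jv$, and to verify that $\tilde\alpha,\tilde\beta$ satisfy the Cauchy--Riemann system together with the even/odd parity in $v$ required by Definition \ref{hyper}; since the $\tilde\beta$ produced by this procedure turns out to be a real operator and hence to commute with $J$, the same splitting yields simultaneously the left and the right statement.

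\textbf{Key computation.} I would first simplify $\mathcal Q_{c,s}(T)$. The commutativity of the components of $T$, combined with the anticommutation $e_ie_j+e_je_i=0$ for $i\ne j$, causes the imaginary-unit cross terms in $T\overline T$ to cancel in pairs, so that $T+\overline T=2T_0$ and $T\overline T=T_0^2+T_1^2+T_2^2+T_3^2=:B$. Hence
$$
\mathcal Q_{c,s}(T)=s^2\mathcal I-2sT_0+B
$$
is a polynomial in $s$ with real (imaginary-unit-free) operator coefficients. Writing $s=u+Jv$ and expanding gives $\mathcal Q_{c,s}(T)=\alpha_0(u,v)+J\beta_0(u,v)$ with
$$
\alpha_0=(u^2-v^2)\mathcal I-2uT_0+B,\qquad \beta_0=2v(u\mathcal I-T_0);
$$
both are real operators, commute with $J$ and with one another, are respectively even and odd in $v$, and satisfy $\partial_u\alpha_0=\partial_v\beta_0$, $\partial_v\alpha_0=-\partial_u\beta_0$.

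\textbf{Inversion and Cauchy--Riemann.} Since $J$ commutes with $\alpha_0,\beta_0$ and these commute with each other,
$$
(\alpha_0+J\beta_0)(\alpha_0-J\beta_0)=\alpha_0^2+\beta_0^2=:M
$$
is a real operator; moreover $\alpha_0-J\beta_0=\mathcal Q_{c,\overline s}(T)$, and the axial symmetry of $\rho_S(T)=\rho_F(T)$ (which follows from the fact that $\mathcal Q_s(T)$ depends on $s$ only through $\Re(s)$ and $|s|$) guarantees $\overline s\in \rho_S(T)$, so $M$ is invertible in $\mathcal B(X)$ and
$$
\mathcal Q_{c,s}(T)^{-1}=M^{-1}(\alpha_0-J\beta_0)=\tilde\alpha(u,v)+J\tilde\beta(u,v),\qquad \tilde\alpha:=\alpha_0 M^{-1},\ \tilde\beta:=-\beta_0 M^{-1}.
$$
The parity of $\tilde\alpha,\tilde\beta$ in $v$ is immediate, and the Cauchy--Riemann identities reduce, thanks to the fact that $\alpha_0,\beta_0,M,M^{-1}$ and their partial derivatives all commute (they lie in the commutative subalgebra generated by $T_0,T_1,T_2,T_3$), to the scalar intrinsic computation: setting $a=\partial_u\alpha_0=\partial_v\beta_0$ and $b=\partial_v\alpha_0=-\partial_u\beta_0$, a short differentiation shows that $\partial_u\tilde\alpha$ and $\partial_v\tilde\beta$ both equal $M^{-2}\bigl(a(\beta_0^2-\alpha_0^2)+2\alpha_0\beta_0\, b\bigr)$, and similarly $\partial_v\tilde\alpha+\partial_u\tilde\beta=0$. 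This establishes left slice hyperholomorphicity; since $\tilde\beta$ is real it commutes with $J$, so $\mathcal Q_{c,s}(T)^{-1}=\tilde\alpha+\tilde\beta J$ is also right slice hyperholomorphic.

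\textbf{Main obstacle.} The conceptual crux is not the Cauchy--Riemann verification itself, which is essentially a scalar computation once commutativity is in place, but the structural observation that the assumption $T\in\mathcal{BC}(X)$ is precisely what makes $\mathcal Q_{c,s}(T)$ a polynomial in $s$ with \emph{real} operator coefficients. Without commutativity the splitting into $\alpha_0+J\beta_0$ with real $\alpha_0,\beta_0$ would fail, $\alpha_0-J\beta_0$ would not be an honest conjugate, and the inversion could not be reduced to the commutative formula $M^{-1}(\alpha_0-J\beta_0)$.
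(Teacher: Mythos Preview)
Your argument is correct and, in substance, is the same observation the paper relies on: because $T\in\mathcal{BC}(X)$ forces $T+\overline T=2T_0$ and $T\overline T=\sum_i T_i^2$ to be real operators, $\mathcal Q_{c,s}(T)$ is a polynomial in $s$ with real (hence $J$-commuting) operator coefficients, and the inverse of such an expression is intrinsic slice hyperholomorphic in $s$. The paper disposes of the lemma in one line by pointing to the scalar analogue (Proposition~\ref{ssm}, which says that $\mathcal Q_{c,s}(q)^{-1}$ is intrinsic in $s$, itself proved only by reference to ``the shape'' of the kernel); it tacitly assumes the reader sees that the scalar computation transfers verbatim to the operator setting. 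You have supplied exactly those details --- the explicit $\alpha_0,\beta_0$ splitting, the conjugate-product inversion $M=\alpha_0^2+\beta_0^2$, the axial symmetry of $\rho_S(T)$ giving $\mathcal Q_{c,\bar s}(T)^{-1}\in\mathcal B(X)$, and the Cauchy--Riemann verification for $\tilde\alpha,\tilde\beta$ --- so your proof is a fully worked-out version of what the paper only gestures at.
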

	
	\begin{proof}
		It follows by Proposition \ref{ssm}.
	\end{proof}
\begin{remark}
We point out an important difference between the commutative and the noncommutative pseudo $S$-resolvent operator.
For $T\in \mathcal{B}(X)$ with noncommuting components the operator $\mathcal{Q}_{c,s}(T)$ is not well defined because $T\overline{T}\not=\overline{T}T$. But in the case $T\in \mathcal{BC}(X)$ then
it turns out to be well defined and the inverse is $ \mathcal{B}(X)$-valued slice hyperholomorphic function for $s\in \rho_S(T)$.

The noncommutative pseudo $S$-resolvent operator $\mathcal{Q}_{s}(T)^{-1}$
turns out to be well defined for operators $T\in \mathcal{B}(X)$ with noncommuting components,
but it is not a $ \mathcal{B}(X)$-valued slice hyperholomorphic function.
 \end{remark}	

\begin{remark}
The functional calculus based on axially harmonic functions in integral form will be called
{\em harmonic functional calculus (on the $S$-spectrum)} or, since it is based on the commutative pseudo
$S$-resolvent operator $\PRes_{c,s}(T)^{-1}$, for simplicity it will also be called $Q$-functional calculus.
\end{remark}
	\begin{definition}[Harmonic functional calculus on the $S$-spectrum]
		\label{qfun}
		Let $T \in \mathcal{BC}(X)$ and set $ds_J=ds(-J)$ for $J \in \mathbb{S}$. For every function $\tilde{f}=\mathcal{D} f$ with $f \in SH_{L}(\sigma_S(T))$, we set
		\begin{equation}
			\label{inte1}
			\tilde{f}(T):= - \frac{1}{\pi} \int_{\partial(U \cap \mathbb{C}_J)} \mathcal{Q}_{c,s}(T)^{-1} ds_J f(s),
		\end{equation}
		where $U$ is an arbitrary bounded slice Cauchy domain with $\sigma_{S}(T) \subset U$ and $ \overline{U} \subset dom(f)$ and $J \in \mathbb{S}$ is an arbitrary imaginary unit.
		\\ For every function $\tilde{f}= f\mathcal{D}$ with $f \in SH_{R}(\sigma_S(T))$, we set
		\begin{equation}
			\label{inte2}
			\tilde{f}(T):=- \frac{1}{\pi} \int_{\partial (U \cap \mathbb{C}_J)} f(s) ds_J \mathcal{Q}_{c,s}(T)^{-1},
		\end{equation}
		where $U$ and $J$ are as above.
	\end{definition}
	\begin{theorem}
		The harmonic functional calculus on the $S$-spectrum is well-defined, i.e., the integrals in \eqref{inte1} and \eqref{inte2} depend neither on the imaginary unit $J \in \mathbb{S}$ nor on the slice Cauchy domain $U$.
	\end{theorem}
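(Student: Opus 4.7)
The plan is to mirror the standard proof of well-posedness for the $S$-functional calculus in Definition \ref{Sfun}, treating independence from the domain and independence from the imaginary unit separately. The essential inputs are the preceding lemma, which ensures that $g(s) := \mathcal{Q}_{c,s}(T)^{-1}$ is a $\mathcal{B}(X)$-valued right and left slice hyperholomorphic function of $s$ on $\rho_S(T)$, together with the Cauchy integral theorem for products of right and left slice hyperholomorphic functions (Theorem \ref{CIT}). It suffices to argue for \eqref{inte1}; the proof of \eqref{inte2} is fully symmetric.

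For independence from $U$ with $J$ fixed, given two admissible slice Cauchy domains $U_1, U_2$, the first step is to interpose a third slice Cauchy domain $U_3$ with $\sigma_S(T) \subset U_3$ and $\overline{U}_3 \subset U_1 \cap U_2$; this is possible because $\sigma_S(T)$ is compact and axially symmetric. On each set $V_i := U_i \setminus \overline{U}_3$, both $g(s)$ and $f(s)$ are slice hyperholomorphic with the correct sidedness, so Theorem \ref{CIT} applied in $\mathbb{C}_J$ to the Cauchy domain $V_i \cap \mathbb{C}_J$ yields $\int_{\partial(V_i \cap \mathbb{C}_J)} g(s)\, ds_J\, f(s) = 0$. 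Taking orientation into account, both integrals over $\partial(U_i \cap \mathbb{C}_J)$ collapse to the common inner integral over $\partial(U_3 \cap \mathbb{C}_J)$.

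For independence from $J$ with $U$ fixed, the next step is to select an axially symmetric slice Cauchy domain $W$ with $\sigma_S(T) \subset W \subset \overline{W} \subset U$. By the previous step, the integral over $\partial(U \cap \mathbb{C}_{J_k})$ equals the one over $\partial(W \cap \mathbb{C}_{J_k})$ for each $J_k \in \mathbb{S}$. One then applies the representation (structure) formula for left slice hyperholomorphic functions, which expresses $f(u+J_2 v)$ as a quaternionic affine combination of $f(u \pm J_1 v)$, together with the analogous identity for the right slice hyperholomorphic $g$. Parametrizing $\partial(W \cap \mathbb{C}_{J_1})$ and $\partial(W \cap \mathbb{C}_{J_2})$ through the common half-plane set $\{(u,v) \in \mathbb{R}^2 : u + \mathbb{S}v \subset W\}$, a direct computation reduces the $J_2$-integral to the $J_1$-integral.

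The main obstacle is the second step. The first is essentially a standard deformation-of-contour argument once the two-sided slice hyperholomorphicity of $g$ is in hand. The delicate part of the second step is the bookkeeping of the structure formula and of the orientation of $ds_J$ under the change of slice; however, this should follow almost verbatim from the corresponding proof for the $S$-functional calculus, since $\mathcal{Q}_{c,s}(T)^{-1}$ exhibits exactly the same slice hyperholomorphic behavior in $s$ as the Cauchy kernels $S_L^{-1}(s,T)$ and $S_R^{-1}(s,T)$.
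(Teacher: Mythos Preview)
Your proposal is correct, and the first step (independence from $U$) is essentially the same as the paper's. For the second step (independence from $J$), however, the paper takes a different route: instead of invoking the structure formula, it represents $\mathcal{Q}_{c,s}(T)^{-1}$ itself by a Cauchy integral over a second slice $\mathbb{C}_I$, using that this operator-valued function is right slice hyperholomorphic on $\rho_S(T)$ and vanishes at infinity; then it swaps the order of integration by Fubini and collapses the inner integral via the scalar Cauchy formula for $f$. Your structure-formula approach is a legitimate alternative and is indeed the one used in some presentations of the $S$-functional calculus; it is more hands-on and requires the orientation and parity bookkeeping you flag, whereas the paper's argument avoids that bookkeeping entirely at the cost of an auxiliary nested domain and the observation that $\lim_{q\to\infty}\mathcal{Q}_{c,q}(T)^{-1}=0$. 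Either way the key input is the same: the two-sided slice hyperholomorphicity of $\mathcal{Q}_{c,s}(T)^{-1}$ in $s$.
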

	\begin{proof}
		Here we show only the case $ \tilde{f}=\mathcal{D}f$ with $f \in SH_L(\sigma_S(T))$, since the other one follows by analogous arguments.
		\\ Since $ \mathcal{Q}_{c,s}(T)^{-1}$ is a right slice hyperholomorphic function in $s$ and $f$ is left slice hyperholomorphic, the independence from the set $U$ follows by the Cauchy integral formula, see Theorem \ref{Cauchy} and Theorem \ref{CIT}.
		\\ Now, we want to show the independence from the imaginary unit.
Let us consider two imaginary units $J$, $I \in \mathbb{S}$ with $J \neq I$ and two bounded slice Cauchy domains $U_q$, $U_s$ with $ \sigma_{s}(T) \subset U_q$, $\overline{U}_q \subset U_s$ and $\overline{U}_s \subset dom(f)$.
 Then every $s \in \partial (U_s \cap \mathbb{C}_J)$ belongs to the unbounded slice Cauchy domain $\mathbb{H}\setminus  U_q $.
		 Recall that $\mathcal Q_{c,q}(T)^{-1}$ is right slice hyperholomorphic on $\rho_S(T)$, also at infinity, since $ \lim_{q \to + \infty} \mathcal Q_{c,q}(T)^{-1}=0$. Thus the Cauchy formula implies
		\begin{eqnarray}
			\nonumber
			\mathcal{Q}_{c,s}(T)^{-1}&=& \frac{1}{2 \pi} \int_{\partial \left((\mathbb{H} \setminus U_q) \cap \mathbb{C}_I \right)} \mathcal Q_{c,q}(T)^{-1} dq_I S^{-1}_R(q,s)\\
			\label{inte3}
			&=& \frac{1}{2 \pi} \int_{\partial (U_q \cap \mathbb{C}_I)} \mathcal Q_{c,q}(T)^{-1} dq_I S^{-1}_L(s,q).
		\end{eqnarray}
		The last equality is due to the fact that $ \partial \left((\mathbb{H} \setminus U_q) \cap \mathbb{C}_I \right)=-\partial (U_q \cap \mathbb{C}_I)$ and $S^{-1}_R(q,s)=-S^{-1}_L(s,q).$ Combining \eqref{inte1} and \eqref{inte3} we get
		\begin{eqnarray*}
			\tilde{f}(T)&=&- \frac{1}{ \pi} \int_{\partial(U_s \cap \mathbb{C}_J)} \mathcal Q_{c,s}(T)^{-1} ds_J f(s)\\
			&=&- \frac{1}{ \pi} \int_{\partial(U_s \cap \mathbb{C}_J)} \left( \frac{1}{2 \pi} \int_{\partial(U_q \cap \mathbb{C}_I)} \mathcal Q_{c,q}(T)^{-1} dq_I S_{L}^{-1}(s,q)\right) ds_J f(s).
		\end{eqnarray*}
		Due to Fubini's theorem we can exchange the order of integration and by the Cauchy formula we obtain
		\begin{eqnarray*}
			\tilde{f}(T)&=& - \frac{1}{ \pi} \int_{\partial (U_q \cap \mathbb{C}_I)} \mathcal Q_{c.q}(T)^{-1}dq_I  \left( \frac{1}{2 \pi} \int_{\partial (U_s \cap \mathbb{C}_J)}   S_{L}^{-1}(s,q) ds_J f(s) \right)\\
			&=& - \frac{1}{\pi} \int_{\partial(U_q \cap \mathbb{C}_I)} \mathcal Q_{c,q}(T)^{-1} dq_I f(q).
		\end{eqnarray*}
		This proves the statement.
	\end{proof}

 \begin{prob}
\label{three}
Let $ \Omega$ be a slice Cauchy domain. It might happen that $f,g\in SH_L(\Omega)$ (resp. $f,g\in SH_R(\Omega)$) and $\mathcal{D}f=\mathcal{D}g$  (resp. $f\mathcal{D}=g\mathcal{D}$). Is it possible to show that for any $T\in\mathcal{BC}(X)$, with $\sigma_S(T)\subset \Omega$, we have $\tilde f(T)=\tilde g(T)$?
 \end{prob}

We start to address the problem by observing that $\mathcal{D}(f-g)=0$ (resp. $(f-g)\mathcal{D}=0$).
Therefore it is necessary to study the set
$$ (\ker{\mathcal{D}})_{SH_L(\Omega)}:=\{f\in SH_L(\Omega): \mathcal{D}(f)=0\}\quad\textrm{resp. $(\ker{\mathcal{D}})_{SH_R(\Omega)}:=\{f\in SH_R(\Omega): (f)\mathcal{D}=0\}$} .$$ 
	\begin{theorem}\label{Tcost} Let $\Omega$ be a connected slice Cauchy domain of $\mathbb H$, then
		\begin{eqnarray*}
			(\ker{\mathcal{D}})_{SH_L(\Omega)}&=&\{f\in SH_L(\Omega): f\equiv \alpha\quad\textrm{for some $\alpha\in\mathbb H$}\}\\
			&=&\{f\in SH_R(\Omega): f\equiv \alpha\quad\textrm{for some $\alpha\in\mathbb H$}\}=(\ker{D})_{SH_R(\Omega)}.
		\end{eqnarray*}
		
	\end{theorem}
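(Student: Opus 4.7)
The plan is to first dispose of the trivial inclusion $\supseteq$, which holds in both cases because any quaternionic constant $f\equiv\alpha$ lies in $SH_L(\Omega)\cap SH_R(\Omega)$ and manifestly satisfies $\mathcal{D}\alpha=\alpha\mathcal{D}=0$. I would then attack the reverse inclusion, starting with the left case.

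For $f\in SH_L(\Omega)$ with $\mathcal{D}f=0$, I would write $f$ in its axial form $f(q)=\alpha(u,v)+J\beta(u,v)$, $q=u+Jv$, $J=\underline{q}/|\underline{q}|$, $v=|\underline{q}|$, and compute $\mathcal{D}f$ explicitly. Applying identity \eqref{cong} to $f$ produces $\partial_{\underline{q}}f=J\partial_v\alpha-\partial_v\beta-\frac{2}{v}\beta$, whence
\[
\mathcal{D}f(q)=\bigl(\partial_{q_0}\alpha-\partial_v\beta-\tfrac{2}{v}\beta\bigr)+J\bigl(\partial_{q_0}\beta+\partial_v\alpha\bigr).
\]
Feeding in the Cauchy--Riemann relations $\partial_u\alpha=\partial_v\beta$ and $\partial_v\alpha=-\partial_u\beta$ from Definition~\ref{hyper} collapses both brackets and leaves
\[
\mathcal{D}f(q)=-\tfrac{2}{v}\,\beta(u,v) \qquad \text{on } \Omega\setminus\mathbb{R}.
\]
The hypothesis $\mathcal{D}f=0$ therefore forces $\beta\equiv 0$ on $\Omega\setminus\mathbb{R}$, and by continuity together with the oddness condition $\beta(u,0)=0$ from \eqref{eveoddll} we get $\beta\equiv 0$ on all of $\Omega$.

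Once $\beta$ is killed, the Cauchy--Riemann system degenerates to $\partial_u\alpha=\partial_v\alpha=0$, so $\alpha$ is locally constant on the parameter set $\mathcal{U}=\{(u,v)\in\mathbb{R}^2:u+\mathbb{S}v\subset\Omega\}$. I then invoke connectedness: because $\Omega$ is connected and axially symmetric, the projection $u+Jv\mapsto(u,|\underline{q}|)$ pushes $\Omega$ continuously onto $\mathcal{U}\cap\{v\geq 0\}$, so $\mathcal{U}$ itself is connected. Hence $\alpha$ is a single quaternionic constant and $f\equiv\alpha$ on $\Omega$. The right case proceeds symmetrically: writing $f\in SH_R(\Omega)$ as $f(q)=\alpha(u,v)+\beta(u,v)J$ and applying the mirror computation gives $f\mathcal{D}=-\frac{2}{v}\beta$, and the same reasoning yields $f\equiv\alpha$ with $\alpha\in\mathbb{H}$. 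The equality of all four sets then follows, since a quaternionic constant is simultaneously in $SH_L(\Omega)$ and $SH_R(\Omega)$.

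The main delicate point I anticipate is the connectedness step when $\Omega\cap\mathbb{R}=\emptyset$, so that $\mathcal{U}$ avoids the axis $\{v=0\}$: there one must check carefully that the quotient of a connected axially symmetric set by the $\mathbb{S}$-action remains connected. This ultimately rests on continuity of the projection map but deserves explicit verification, since the naive passage from ``locally constant'' to ``constant'' can fail if $\mathcal{U}$ turned out to be disconnected.
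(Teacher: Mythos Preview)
Your argument is correct and takes a genuinely different route from the paper's. The paper expands $f$ in a power series $\sum_{k\ge 0} q^k\alpha_k$ about a real point of $\Omega$ (``after a change of variable if needed''), applies Lemma~\ref{be} to write $\mathcal{D}f=-2\sum_{k\ge 1}\sum_{s=1}^k q^{k-s}\overline{q}^{\,s-1}\alpha_k$, restricts to the real axis to obtain $0=-2\sum_{k\ge 1} kq_0^{k-1}\alpha_k$, and invokes uniqueness of real-analytic expansions to force $\alpha_k=0$ for $k\ge 1$; connectedness of $\Omega$ then propagates the constant. Your approach bypasses power series entirely: the direct axial computation via \eqref{cong} together with the Cauchy--Riemann system yields the closed formula $\mathcal{D}f=-\tfrac{2}{v}\beta$, from which $\beta\equiv 0$ and then $\nabla\alpha=0$ follow immediately. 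This is more elementary (no real-analyticity lemma) and also more robust, since it does not rely on $\Omega$ containing a real point at which to center an expansion---a hypothesis the paper's proof uses implicitly. The connectedness issue you flag is handled exactly as you say: the continuous map $q\mapsto (q_0,|\underline{q}|)$ sends the connected set $\Omega$ onto a connected subset of the half-plane, and since $\alpha$ is even in $v$ this suffices to pass from locally constant to globally constant even when $\Omega\cap\mathbb{R}=\emptyset$ (in that case $\mathcal{U}$ itself may split into two reflected halves, but the evenness of $\alpha$ makes this harmless).
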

	\begin{proof}
		We prove the result in the case $f\in SH_L(\Omega)$ since the case $f\in SH_R(\Omega)$ follows by similar arguments. We proceed by double inclusion. The fact that
$$
(\ker{D})_{SH_L(\Omega)}\supseteq\{f\in SH_L(\Omega): f\equiv \alpha\quad\textrm{for some $\alpha\in\mathbb H$}\}
$$ is obvious. The other inclusion can be proved observing that if $f\in (\ker{\mathcal{D}})_{SH_L(\Omega)}$, after a change of variable if needed, there exists $r>0$ such that the function $f$ can be expanded in a convergent series at the origin
		$$f(q)=\sum_{k=0}^{\infty}q^k \alpha_k\quad\textrm{for $(\alpha_k)_{k\in\mathbb N_0}\subset\mathbb H$ and for any $q\in B_r(0)$}$$
		where $B_r(0)$ is the ball centered at $0$ and of radius $r$. By Lemma \ref{be}, we have
		$$
		0=\mathcal{D}f(q)\equiv\sum_{k=1}^{\infty} \mathcal{D}(q^k)\alpha_k=-2\sum_{k=1}^{\infty}\sum_{s=1}^{k} q^{k-s}\overline q^{s-1} \alpha_k,\quad \forall q\in B_r(0),
		$$
If we restrict the previous series in a neighbourhood  $U$ of $0$ of the real line, we obtain
$$
0= (\mathcal{\mathcal{D}}f)(q_0)= -2\sum_{k=1}^{\infty}k q_0^{k-1}\alpha_k,\quad \forall q_0\in U.
$$
By \cite[Cor. 1.2.6]{KP}, we have
		$$\alpha_k=0,\quad \forall k\geq 1,$$
		which yields $f(q)\equiv \alpha_0$ in $U$ and since $\Omega$ is connected $f(q)\equiv\alpha_0$ for any $q\in\Omega$.
	\end{proof}
We solve the problem \ref{three} in the case in whcih $ \Omega$ is connected.
	\begin{proposition}\label{con}
		Let $T\in\mathcal{BC}(X)$ and let $U$ be a  connected slice   Cauchy domain with $\sigma_S(T)\subset U$. If $f,g\in SH_L(U)$ (resp. $f,g\in SH_R(U)$) satisfy the property $\mathcal{D}f=\mathcal{D}g$ (resp. $f\mathcal{D}=g\mathcal{D}$) then $\tilde f(T)=\tilde g(T)$.
	\end{proposition}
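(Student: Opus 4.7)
The plan is to reduce the identity $\tilde f(T)=\tilde g(T)$ to the statement that the harmonic functional calculus vanishes on every constant function, and then to prove this vanishing by combining contour independence with the decay of the commutative pseudo $S$-resolvent at infinity.

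First, I would use the linearity of the integrals in Definition \ref{qfun} in the slice hyperholomorphic argument to write $\tilde f(T)-\tilde g(T)=\widetilde{(f-g)}(T)$. Since $U$ is connected and $\mathcal{D}(f-g)=0$ (resp. $(f-g)\mathcal{D}=0$) with $f-g\in SH_L(U)$ (resp. $SH_R(U)$), Theorem \ref{Tcost} gives $f-g\equiv \alpha$ for some $\alpha\in\mathbb H$. The problem is thus reduced to proving that for every constant quaternion $\alpha$,
$$\widetilde\alpha(T)=-\frac{1}{\pi}\int_{\partial(U\cap\mathbb C_J)}\mathcal Q_{c,s}(T)^{-1}\,ds_J\,\alpha=0,$$
and analogously in the right case with the integral \eqref{inte2}.

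The key step is to enlarge the contour and then apply a decay estimate. Because the integral is independent of the slice Cauchy domain (as proved just after Definition \ref{qfun}) and because the constant $\alpha$ is entire, for every $R>\|T\|$ the ball $B_R(0)$ contains $\sigma_S(T)$ and we may replace $U$ by $B_R(0)$. From the series expansion of Proposition \ref{p2} we have, for $|s|>\|T\|$,
$$\|\mathcal Q_{c,s}(T)^{-1}\|\leq \sum_{m=1}^\infty m\|T\|^{m-1}|s|^{-1-m}=\frac{1}{(|s|-\|T\|)^2}.$$
Since $\partial(B_R(0)\cap\mathbb C_J)$ has length $2\pi R$, the norm of the integral is $O(R^{-1})$. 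But the value of the integral does not depend on $R>\|T\|$, so letting $R\to\infty$ forces $\widetilde\alpha(T)=0$.

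The right slice hyperholomorphic case is treated by a verbatim argument starting from \eqref{inte2}. I do not expect any serious obstacle: the reduction to constants is immediate from Theorem \ref{Tcost} and the decay estimate is elementary given the series representation of $\mathcal Q_{c,s}(T)^{-1}$. The only step that requires some care is verifying that the contour-independence proved earlier indeed applies to constant integrands (which it does, since constants are trivially entire, so $\overline U\subset\fdom(\alpha)$ is automatic).
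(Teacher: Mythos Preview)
Your proof is correct and follows essentially the same approach as the paper: reduce to constants via Theorem \ref{Tcost}, then move the contour to a large ball and use the series expansion of Proposition \ref{p2}. The only minor difference is in the last step, where the paper integrates the series term by term (each $\int_{\partial(B_r(0)\cap\mathbb C_J)} s^{-1-m}\,ds_J$ vanishes for $m\ge 1$), while you instead extract a norm bound $\|\mathcal Q_{c,s}(T)^{-1}\|\le (|s|-\|T\|)^{-2}$ and let $R\to\infty$; both arguments rest on the same ingredients.
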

	\begin{proof}
		We prove the theorem in the case $f,g\in SH_L(\Omega)$ since the case $f,g\in SH_R(\Omega)$ follows by similar arguments. By definition of the harmonic functional calculus on the $S$-spectrum, see Definition \ref{qfun}, we have
		$$
		\tilde f(T)-\tilde g(T)=-\frac 1{\pi}\int_{\partial (U\cap\cc_J)} \mathcal Q_{c,s}(T)^{-1}ds_J(f(s)-g(s)).
		$$
		Since $\mathcal Q_{c,s}(T)^{-1}$ is slice hyperholomorphic in the variable $s$ by Theorem \ref{Cauchy}, we can change the domain of integration to $B_r(0)\cap\cc_J$ for some $r>0$ with $ \| T \| <r$. Moreover, by hypothesis we have that $f(s)-g(s) \in (\ker{D})_{SH_L(\Omega)}$, thus by Theorem \ref{Tcost} and Proposition \ref{p2} we get
		\[
		\begin{split}
			\tilde f(T)-\tilde g(T)&=-\frac 1{\pi}\int_{\partial (B_r(0)\cap\cc_J)} \mathcal Q_{c,s}(T)^{-1}ds_J(f(s)-g(s))
\\
&
=\int_{\partial (B_r(0)\cap\cc_J)} \mathcal Q_{c,s}(T)^{-1}ds_J\alpha\\
			&= \sum_{m=1}^\infty  \sum_{k=1}^m T^{m-k} \overline T^{k-1} \int_{\partial (B_r(0)\cap \cc_J)}s^{-1-m} ds_J \alpha=0.
		\end{split}
		\]
	\end{proof}

In order to solve Problem \ref{three}, in the case $ \Omega$ not connected, we need the following lemma, which is based on the monogenic functional calculus developed by McIntosh and collaborators, see \cite{JM,J}.

	\begin{lemma}
		\label{mono}
		Let $T \in \mathcal{BC}(X)$ be such that $T= T_1e_1+T_2 e_2+T_3 e_3$, and assume that the operators $T_{\ell}$, $\ell=1,2,3$, have real spectrum. Let $G$ be a bounded slice Cauchy domain such that $(\partial G) \cap \sigma_{S}(T)= \emptyset$. For every $J \in \mathbb{S}$ we have
		\begin{equation}
			\int_{\partial{(G \cap \mathbb{C}_J)}} \mathcal{Q}_{c, s}(T)^{-1} ds_J=0.
		\end{equation}
	\end{lemma}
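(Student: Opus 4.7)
The strategy is to reduce the operator integral to a scalar integral parametrized by the spectrum of a non-negative real operator, and then to argue that the scalar integral vanishes by axial symmetry of $G$. Since $T=T_1e_1+T_2e_2+T_3e_3$ has vanishing real part, $T+\overline{T}=0$; by commutativity of the $T_\ell$ together with $e_ie_j+e_je_i=0$ for $i\neq j$ and $e_i^2=-1$, one checks that $T\overline{T}=T_1^2+T_2^2+T_3^2=:A$. Hence $\mathcal{Q}_{c,s}(T)=s^2\mathcal{I}+A$, and since each $T_\ell$ has real spectrum and the $T_\ell$ pairwise commute, $A$ is a ``scalar'' operator with spectrum $\sigma(A)\subset[0,\infty)$.

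I would then carry out the scalar analogue. For $\lambda>0$, partial fractions in $\mathbb{C}_J$ give
\[
(s^2+\lambda)^{-1}=\frac{1}{2J\sqrt{\lambda}}\Bigl(\frac{1}{s-J\sqrt{\lambda}}-\frac{1}{s+J\sqrt{\lambda}}\Bigr).
\]
Combined with the Cauchy formula $\int_{\partial(G\cap\mathbb{C}_J)}(s-a)^{-1}\,ds_J=2\pi\,\chi_G(a)$ for $a\in\mathbb{C}_J\setminus\partial G$, the crucial observation is that $\pm J\sqrt{\lambda}$ both lie on the sphere $[J\sqrt{\lambda}]=\{I\sqrt{\lambda}:I\in\mathbb{S}\}$, so by axial symmetry of $G$ they are both inside or both outside $G$. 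Either way the two contributions cancel, yielding
\[
\varphi(\lambda):=\int_{\partial(G\cap\mathbb{C}_J)}(s^2+\lambda)^{-1}\,ds_J=0.
\]
The case $\lambda=0$ follows directly from $\int s^{-2}\,ds_J=0$.

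To pass to the operator level I would invoke the classical Riesz--Dunford calculus for $A$. The hypothesis $\partial G\cap\sigma_S(T)=\emptyset$ guarantees that $\sigma(A)$ is disjoint from the compact singular set $\{-s^2:s\in\partial(G\cap\mathbb{C}_J)\}$, so $\varphi(z)$ extends to a holomorphic function on an open connected neighborhood $\Omega$ of $\sigma(A)$ meeting $(0,\infty)$. By the scalar computation and analytic continuation, $\varphi\equiv 0$ on $\Omega$. Choosing a Riesz--Dunford contour $\Gamma\subset\Omega$ surrounding $\sigma(A)$ and writing
\[
(s^2+A)^{-1}=\frac{1}{2\pi i}\int_\Gamma(z-A)^{-1}(s^2+z)^{-1}\,dz,
\]
Fubini's theorem then gives
\[
\int_{\partial(G\cap\mathbb{C}_J)}\mathcal{Q}_{c,s}(T)^{-1}\,ds_J=\frac{1}{2\pi i}\int_\Gamma(z-A)^{-1}\varphi(z)\,dz=0.
\]

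I expect the main obstacle to be the analytic continuation step: verifying that $\Omega$ can be chosen connected and large enough to accommodate a Riesz--Dunford contour around $\sigma(A)$ while still touching the segment of $(0,\infty)$ on which $\varphi$ was computed to vanish. This rests on the compactness and disjointness of $\sigma(A)$ and $\{-s^2:s\in\partial(G\cap\mathbb{C}_J)\}$. A secondary subtlety is the use of the complex Riesz--Dunford calculus for the quaternionic operator $A$; this is justified because $A$ is scalar-like (no imaginary units appear in its components) with real spectrum, which is precisely where McIntosh's monogenic framework applies.
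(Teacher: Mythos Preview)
Your approach is correct and is genuinely different from the paper's. The paper proceeds via the $F$-resolvent equation $\mathcal{Q}_{c,s}(T)^{-1}=-\tfrac14\bigl(F_L(s,T)s-TF_L(s,T)\bigr)$, represents $F_L(s,T)$ through McIntosh's monogenic functional calculus as an integral $\int_{\partial\Omega}G(\omega,T)\,\mathbf{D}\omega\,F_L(s,\omega)$, and then uses Fubini together with the scalar identities $\int F_L(s,\omega)\,ds_J=\Delta(1)=0$ and $\int F_L(s,\omega)\,ds_J\,s=\Delta(q)=0$ (the Fueter theorem in integral form). Your argument instead exploits the very special structure $\mathcal{Q}_{c,s}(T)=s^2\mathcal{I}+A$ available when $T_0=0$, reduces to a one-variable scalar integral via the classical Riesz--Dunford calculus for the real operator $A$, and kills that integral by partial fractions and axial symmetry. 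Your route is more elementary in that it avoids the monogenic calculus and the $F$-resolvent machinery altogether; the paper's route stays within the hypercomplex framework it is building and makes transparent why the hypothesis of real spectra is natural (it is exactly what the McIntosh calculus needs).

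Two remarks on the points you flag. First, the connectedness concern is harmless: since $\sigma(A)\subset[0,\infty)$ and you have shown $\varphi(\lambda)=0$ for every $\lambda\in[0,\infty)\setminus S$, each component of $\mathbb{C}\setminus S$ meeting $\sigma(A)$ automatically contains a real interval on which $\varphi$ vanishes, so analytic continuation applies component by component and no global connectedness is required for Riesz--Dunford. Second, the inclusion $\sigma(A)\subset[0,\infty)$ is not entirely innocent on a Banach space: it rests on the projection property and polynomial spectral mapping for the Taylor (or Harte) joint spectrum of the commuting tuple $(T_1,T_2,T_3)$, which you should state explicitly. Your justification of the Riesz--Dunford step via ``McIntosh's monogenic framework'' is misplaced; the correct point is simply that $A\in\mathcal{B}(X_{\mathbb{R}})$ has real spectrum, so its $\mathbb{C}_J$-complexification admits the ordinary holomorphic calculus, and all resulting operators commute with the quaternionic scalars because $A$ is real.
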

	\begin{proof}
		Since $\Delta(1)=0$ and $\Delta(q)=0$, by Theorem \ref{Fueter} we also have
		\begin{equation}\label{nova}
			\int_{\partial{(G \cap \mathbb{C}_J)}} F_L(s,q) ds_J=\Delta(1)=0,
		\end{equation}
		and
		\begin{equation}\label{nova_bis}
		 \int_{\partial{(G \cap \mathbb{C}_J)}} F_L(s,q) ds_Js =\Delta(q)=0,
		\end{equation}
		for all $q \notin \partial G$ and $J \in \mathbb{S}$. By the monogenic functional calculus \cite{JM,J} we have
		$$ F_L(s,T)= \int_{\partial \Omega} G(\omega,T) \mathbf{D}\omega F_L(s,\omega),$$
		where $\mathbf{D}\omega$ is a suitable differential form, the open set $ \Omega$ contains the left spectrum of $T$ and $G(\omega,T)$ is the Fueter resolvent operator. By Theorem \ref{fre} we can write $$\mathcal Q_{c,s}(T)^{-1}=-\frac 14 (F_L(s,T)s-TF_L(s,T)),$$ thus  we have
		\begin{eqnarray*}
			&& \int_{\partial{(G \cap \mathbb{C}_J)}} \mathcal{Q}_{c,s}(T)^{-1} ds_J=-\frac 14 \int_{\partial{(G \cap \mathbb{C}_J)}}  F_L(s,T)s-TF_L(s,T) ds_J\\
			&&=-\frac 14 \left( \int_{\partial{(G \cap \mathbb{C}_J)}}  \int_{\partial\Omega} G(\omega, T)\mathbf{D}\omega F_L(s,\omega)s\, ds_J - T\int_{\partial{(G \cap \mathbb{C}_J)}}\int_{\partial\Omega} G(\omega,T)\mathbf{D}\omega F_L(s,\omega) ds_J\right)\\
			&&= -\frac 14\left( \int_{\partial \Omega} G(\omega,T) \mathbf{D}\omega \left(\int_{\partial{(G \cap \mathbb{C}_J)}} F_L(s,\omega) ds_J s \right)-T\int_{\partial \Omega} G(\omega,T) \mathbf{D}\omega \left(\int_{\partial{(G \cap \mathbb{C}_J)}} F_L(s,\omega)  ds_J\right)\right)\\
			&&= 0
		\end{eqnarray*}
		where the second equality is a consequence of the Fubini's Theorem and the last equality is a consequence of formulas \eqref{nova} and \eqref{nova_bis}.
	\end{proof}
Finally in the following result we give an answer to the question in Problem \ref{three}.
	\begin{proposition}
		Let $T \in \mathcal{BC}(X)$ be such that $T= T_1e_1+T_2 e_2+T_3 e_3$, and assume that the operators $T_{\ell}$, $\ell=1,2,3$, have real spectrum. Let $U$ be a slice Cauchy domain with $\sigma_S(T)\subset U$. If $f,g\in SH_L(U)$ (resp. $f,g\in SH_R(U)$) satisfy the property $\mathcal{D}f=\mathcal{D}g$ (resp $f\mathcal{D}=g\mathcal{D}$) then $\tilde f(T)=\tilde g(T)$.
	\end{proposition}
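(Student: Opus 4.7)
The plan is to reduce the non-connected case to the connected case by decomposing $U$ into its connected components, and then use \textbf{Lemma \ref{mono}} as the essential input that replaces the series-expansion argument used in Proposition \ref{con}. The underlying obstruction in the non-connected setting is that $f-g$ need no longer be a single constant on $U$; on each connected component it is constant (by \textbf{Theorem \ref{Tcost}}), but the constants can differ. This forces us to evaluate the commutative pseudo $S$-resolvent integral separately on each component, including those components that enclose part of $\sigma_S(T)$, where a deformation to a ball around the origin is no longer available.

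I will treat the left case, the right case being analogous. First I would write $U = U_1 \sqcup \cdots \sqcup U_n$ as its disjoint union of connected components. Because axial symmetry is preserved under taking connected components (each $[p]\subset U$ is connected and so lies in a single $U_i$), each $U_i$ is itself an axially symmetric bounded slice Cauchy domain, with $\partial(U\cap \mathbb C_J) = \bigsqcup_i \partial(U_i\cap\mathbb C_J)$. On each $U_i$ the function $h_i := (f-g)|_{U_i}\in SH_L(U_i)$ satisfies $\mathcal D h_i = 0$ on a connected slice Cauchy domain, so by \textbf{Theorem \ref{Tcost}} there exists $\alpha_i\in\mathbb H$ with $h_i \equiv \alpha_i$. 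Splitting the defining integral according to the components gives
\begin{equation*}
\tilde f(T) - \tilde g(T) = -\frac{1}{\pi}\sum_{i=1}^n\left(\int_{\partial(U_i\cap\mathbb C_J)}\mathcal Q_{c,s}(T)^{-1}\,ds_J\right)\alpha_i,
\end{equation*}
where the scalars $\alpha_i$ can be pulled out of the integral on the right.

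It remains to show that each integral in the sum vanishes. Since $\sigma_S(T) \subset U$ and $U$ is open, $\partial U_i \cap \sigma_S(T) = \emptyset$ for every $i$; thus $U_i$ is a bounded slice Cauchy domain satisfying the hypothesis of \textbf{Lemma \ref{mono}}, and that lemma yields $\int_{\partial(U_i\cap\mathbb C_J)}\mathcal Q_{c,s}(T)^{-1}\,ds_J = 0$. Summing, $\tilde f(T)=\tilde g(T)$. The main obstacle, and the reason the extra hypothesis that $T=T_1e_1+T_2e_2+T_3e_3$ has real component spectra enters here (through \textbf{Lemma \ref{mono}}), is precisely the treatment of those components $U_i$ which contain part of $\sigma_S(T)$: there one cannot collapse the contour to a small circle about $0$ as in Proposition \ref{con}, and instead must rely on the identity $\mathcal Q_{c,s}(T)^{-1} = -\tfrac14(F_L(s,T)s - T F_L(s,T))$ combined with the monogenic functional calculus representation of $F_L(s,T)$, which is exactly the content of Lemma \ref{mono}.
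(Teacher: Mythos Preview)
Your proof is correct and follows essentially the same approach as the paper: decompose $U$ into connected components, use Theorem \ref{Tcost} to conclude $f-g$ is a (possibly different) constant on each component, split the integral accordingly, and kill each summand with Lemma \ref{mono}. Your version adds some helpful justification (axial symmetry of the components, the boundary condition for Lemma \ref{mono}) that the paper leaves implicit.
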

	\begin{proof}
		If $U$ is connected we can use Proposition \ref{con}. If $U$ is not connected then $U=\cup_{l=1}^n U_l$ where the $U_l$ are the connected components of $U$. Hence, we have $f(s)-g(s)=\sum_{l=1}^n\chi_{U_l}(s)\alpha_l$ and we can write
		$$
		\tilde f(T)-\tilde g(T)=-\sum_{l=1}^n\frac 1{\pi}\int_{\partial(U_l\cap\mathbb C_J)}\mathcal{Q}_{c,s}(T)^{-1}ds_J\alpha_l.
		$$
		The last summation is zero by Lemma \ref{mono}.
	\end{proof}
	
	We conclude this section with some algebraic properties of the harmonic functional calculus.
	\begin{proposition}
		Let $T\in\mathcal{BC}(X)$ be such that $T=T_1e_1+T_2e_2+T_3e_3$, and assume that the operators $T_\ell$, $\ell=1,\,2,\,3$, have real spectrum.
		\begin{itemize}
			\item If $\tilde f=\mathcal{D}f$ and $\tilde g=\mathcal{D}g$ with $f,g\in SH_L(\sigma_S(T))$ and $a\in\hh$, then
			$$ (\tilde fa+\tilde g)(T)=\tilde f (T)a+\tilde g(T). $$
			\item If $\tilde f=f\mathcal{D}$ and $\tilde g=g\mathcal{D}$ with $f,g\in SH_R(\sigma_S(T))$ and $a\in\hh$, then
			$$ (a\tilde f+\tilde g)(T)=a\tilde f (T)+\tilde g(T). $$
		\end{itemize}
	\end{proposition}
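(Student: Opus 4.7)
My plan is to derive both identities directly from Definition~\ref{qfun}, using two elementary facts. First, $SH_L$ is a right $\hh$-module and $SH_R$ is a left $\hh$-module: if $f=\alpha+J\beta$ is left slice hyperholomorphic then $fa=(\alpha a)+J(\beta a)$ still satisfies the same Cauchy--Riemann system and parity conditions \eqref{eveodd}, and analogously $af=(a\alpha)+(a\beta)J$ when $f=\alpha+\beta J$ is right slice hyperholomorphic. Second, since $a\in\hh$ is constant, the Fueter operator pulls $a$ through on the opposite side,
\[
\mathcal{D}(fa)=(\mathcal{D}f)a,\qquad (af)\mathcal{D}=a(f\mathcal{D}),
\]
as is immediate from $\mathcal{D}=\partial_{q_0}+\sum_{i=1}^{3}e_i\partial_{q_i}$ once one writes out the sum and observes that $a$ commutes with $\partial_{q_i}$ and can be factored across the sum on the chosen side.

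For the left statement, I would pick a bounded slice Cauchy domain $U$ with $\sigma_S(T)\subset U$ and $\overline U$ contained in the common domain of $f$ and $g$. Set $h:=fa+g\in SH_L(U)$; by the observations above,
\[
\mathcal{D}h=(\mathcal{D}f)a+\mathcal{D}g=\tilde f\,a+\tilde g,
\]
so $h$ is a legitimate slice hyperholomorphic preimage of $\tilde f\,a+\tilde g$ under $\mathcal{D}$. Substituting $h$ into Definition~\ref{qfun},
\[
(\tilde f a+\tilde g)(T)=-\frac{1}{\pi}\int_{\partial(U\cap\mathbb{C}_J)}\mathcal{Q}_{c,s}(T)^{-1}\,ds_J\,\bigl(f(s)a+g(s)\bigr),
\]
and splitting the integral while factoring the constant $a$ out on the right yields $\tilde f(T)a+\tilde g(T)$. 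The right-side statement is symmetric: the function $af+g\in SH_R(U)$ is a preimage of $a\tilde f+\tilde g$ under $(\cdot)\mathcal{D}$, substitute into \eqref{inte2}, and pull $a$ out on the left of the integral.

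There is essentially no obstacle beyond careful bookkeeping of which side multiplication acts on. The two points that must be verified are (i) that $a$ is a fixed scalar, so it commutes with the differential operator $\mathcal{D}$ and with the vector-valued integral on the appropriate side, and (ii) that the evaluation $(\tilde f a+\tilde g)(T)$ is independent of the chosen slice hyperholomorphic preimage under $\mathcal{D}$, which legitimises the choice $h=fa+g$. The latter is precisely the content of the preceding proposition (using the real-spectrum hypothesis on the $T_\ell$) and is therefore already at our disposal.
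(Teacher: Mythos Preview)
Your proof is correct and follows the same approach as the paper, which simply states that the identities follow immediately from the linearity of the integrals in \eqref{inte1} and \eqref{inte2}. You have spelled out in more detail what the paper leaves implicit: that $fa+g$ is a legitimate slice hyperholomorphic preimage of $\tilde f a+\tilde g$, and that the independence of the preimage (guaranteed by the preceding well-definedness results, which use the real-spectrum hypothesis) justifies this choice.
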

	\begin{proof}
		The obove identities follow immediately from the linearity of the integrals in \eqref{inte1}, resp. \eqref{inte2}.
	\end{proof}
	
	\begin{proposition}
		Let $T\in\mathcal{BC}(X)$ be such that $T=T_1e_1+T_2e_2+T_3e_3$, and assume that the operators $T_\ell$, $\ell=1,\,2,\,3$, have real spectrum.
		\begin{itemize}
			\item If $\tilde f=\mathcal{D}f$ with $f\in SH_L(\sigma_S(T))$ and assume that $f(q)=\sum_{m=0}^{\infty} q^ma_m$ with $a_m\in\mathbb H$, where this series converges on a ball $B_r(0)$ with $\sigma_S(T)\subset B_r(0)$. Then
			$$ \tilde f(T)=-2\sum_{m=1}^\infty  \sum_{k=1}^m T^{m-k} \overline T^{k-1} a_m.$$
			\item If $\tilde f=f\mathcal{D}$ with $f\in SH_R(\sigma_S(T))$ and assume that $f(q)=\sum_{m=0}^{\infty} a_mq^m$ with $a_m\in\mathbb H$, where this series converges on a ball $B_r(0)$ with $\sigma_S(T)\subset B_r(0)$. Then
			$$ \tilde f(T)=-2\sum_{m=1}^\infty  \sum_{k=1}^m a_m T^{m-k} \overline T^{k-1}.$$
		\end{itemize}
	\end{proposition}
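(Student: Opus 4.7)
The plan is to start from the integral definition of $\tilde{f}(T)$ in Definition~\ref{qfun}, substitute the series representation of the commutative pseudo $S$-resolvent operator provided by Proposition~\ref{p2}, and identify the resulting scalar integrals with the Taylor coefficients $a_m$ of $f$ via the slice hyperholomorphic Cauchy formula.

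Concretely, I first fix a radius $\rho$ with $\|T\| < \rho < r$, which is possible since $\sigma_S(T)\subset B_r(0)$ forces the $S$-spectral radius of $T$ to be strictly less than $r$, and (after shrinking $r$ if needed) we may assume $\|T\| < r$ as well. The slice Cauchy domain $U = B_\rho(0)$ then satisfies $\sigma_S(T)\subset U$ and $\overline U\subset\mathrm{dom}(f)$, so by the well-definedness of the harmonic functional calculus
\[
\tilde{f}(T) \;=\; -\frac{1}{\pi}\int_{\partial(B_\rho(0)\cap \mathbb{C}_J)} \mathcal{Q}_{c,s}(T)^{-1}\, ds_J\, f(s).
\]

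On the contour $|s|=\rho > \|T\|$, Proposition~\ref{p2} provides the operator-norm convergent expansion
\[
\mathcal{Q}_{c,s}(T)^{-1} \;=\; \sum_{m=1}^\infty \sum_{k=1}^m T^{m-k}\overline{T}^{k-1}\, s^{-1-m},
\]
and the geometric majorant used in the proof of Proposition~\ref{p2} yields uniform convergence of this series along the compact contour. I therefore exchange the doubly-indexed sum with the integral to get
\[
\tilde{f}(T) \;=\; -\frac{1}{\pi}\sum_{m=1}^\infty \sum_{k=1}^m T^{m-k}\overline{T}^{k-1}\int_{\partial(B_\rho(0)\cap \mathbb{C}_J)} s^{-1-m}\, ds_J\, f(s).
\]

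The remaining step is the identification
\[
\frac{1}{2\pi}\int_{\partial(B_\rho(0)\cap \mathbb{C}_J)} s^{-1-m}\, ds_J\, f(s) \;=\; a_m.
\]
I would prove this by substituting the power series $f(s)=\sum_{n=0}^\infty s^n a_n$, which converges uniformly on the compact contour since $\rho < r$, interchanging sum and integral, and invoking the elementary orthogonality relation
\[
\frac{1}{2\pi}\int_{\partial(B_\rho(0)\cap\mathbb{C}_J)} s^{n-1-m}\, ds_J \;=\; \delta_{m,n},
\]
which is a standard consequence of the Cauchy formula of Theorem~\ref{Cauchy} applied to the intrinsic monomials $q^n$. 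Collecting the pieces delivers the stated formula $\tilde{f}(T)=-2\sum_{m=1}^\infty \sum_{k=1}^m T^{m-k}\overline{T}^{k-1} a_m$. The right-sided case is entirely parallel, using the second series in Proposition~\ref{p2} together with the right Cauchy formula \eqref{Cauchyright}. The main technical obstacle is justifying the interchange of the double sum with the integral, which reduces to normal convergence of the resolvent series on the compact contour; this is precisely the ratio-test estimate already used in the proof of Proposition~\ref{p2}.
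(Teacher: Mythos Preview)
Your proposal is correct and follows essentially the same route as the paper: both arguments pick a ball $B_\rho(0)$ as the slice Cauchy domain, invoke the series expansion of $\mathcal{Q}_{c,s}(T)^{-1}$ from Proposition~\ref{p2}, interchange sums and integral by uniform convergence, and use the orthogonality $\int_{\partial(B_\rho(0)\cap\mathbb{C}_J)} s^{-1-m}\,ds_J\,s^l = 2\pi\,\delta_{m,l}$. The only cosmetic difference is the order of operations (the paper expands $f$ first and then the resolvent, you do the reverse); your aside that one may ``shrink $r$'' to ensure $\|T\|<r$ is not quite right, but the paper glosses over the same hypothesis when invoking Proposition~\ref{p2}, so this is not a discrepancy between the two proofs.
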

	\begin{proof}
		We prove the first assertion since the second one can be proven similarly. We choose an imaginary unit $J\in\mathbb S$ and a radius $0<R<r$ such that $\sigma_S(T)\subset B_R(0)$. Then the series expansion of $f$ converges uniformly on $\partial (B_R(0)\cap\cc_J)$, and so
		$$
		\tilde f(T)=-\frac 1{\pi} \int_{\partial (B_R(0)\cap\cc_J)} \mathcal Q_{c,s}(T)^{-1}\,ds_J\, \sum_{l=0}^\infty s^la_l=-\frac 1{\pi}\sum_{l=0}^\infty \int_{\partial (B_R(0)\cap\cc_J)} \mathcal Q_{c,s}(T)^{-1}\, ds_J s^la_l.
		$$
		By replacing $\mathcal Q_{c,s}(T)^{-1}$ with its series expansion, see Proposition \ref{p2}, we further obtain
		\[
		\begin{split}
			\tilde f(T)&=-\frac 1{\pi} \int_{\partial (B_R(0)\cap\cc_J)} \sum_{m=1}^\infty  \sum_{k=1}^m T^{m-k} \overline T^{k-1} s^{-1-m}\,ds_J\, \sum_{l=0}^\infty s^la_l\\
			& =-\frac 1{\pi} \sum_{m=1}^\infty  \sum_{k=1}^m \sum_{l=0}^\infty T^{m-k} \overline T^{k-1} \int_{\partial (B_R(0)\cap\cc_J)} s^{-1-m}\, ds_J\, s^la_l\\
			&= -2\sum_{m=1}^\infty  \sum_{k=1}^m T^{m-k} \overline T^{k-1} a_m.
		\end{split}
		\]
		The last equality is due to the fact that $\int_{\partial (B_R(0)\cap\cc_J)}s^{-1-m}\, ds_J\, s^l$ is equal to $2\pi$ if $l=m$, and $0$ otherwise.
	\end{proof}
	
\section{The resolvent equations for  harmonic functional calculus}

In this section we prove various resolvent equations for the
pseudo $S$-resolvent operator $\mathcal{Q}_{c, s}(T)^{-1}$.
The first version of this equation is written in terms of $\mathcal{Q}_{c, s}(T)^{-1}$ and of the $S$-resolvent operators.

\begin{theorem}[The $Q$-resolvent equation with $S$-resolvent operators]
	\label{pseudo}
	Let $T \in \mathcal{BC}(X)$. Then, for $p$, $s \in \rho_{S}(T)$ with $s\not\in [p]$, the following equalities hold
	\begin{eqnarray}
		\label{star5}
		\mathcal{Q}_{c, s}(T)^{-1} \mathcal{Q}_{c,p}(T)^{-1} &=& \bigl \{ \left[ \mathcal{Q}_{c,s}(T)^{-1} S_{L}^{-1}(p,T)-S_{R}^{-1}(s,T) \mathcal{Q}_{c,p}(T)^{-1}\right]p\\
		\nonumber
		&& - \bar{s}\left[ \mathcal{Q}_{c,s}(T)^{-1} S_{L}^{-1}(p,T)-S_{R}^{-1}(s,T) \mathcal{Q}_{c,p}(T)^{-1}\right] \bigl \} (p^{2}-2s_0p+|s|^2)^{-1},
	\end{eqnarray}
	and
	\begin{eqnarray}
		\label{star6}
		\! \! \! \!
		\mathcal{Q}_{c,s}(T)^{-1} \mathcal{Q}_{c,p}(T)^{-1}  \! \! \! &=& \! \! \! \,(p^{2}-2s_0p+|s|^2)^{-1} \bigl \{ s\left [ \mathcal{Q}_{c,s}(T)^{-1} S_{L}^{-1}(p,T)-S_{R}^{-1}(s,T) \mathcal{Q}_{c,p}(T)^{-1}\right]\\
		\nonumber
		&& - \left[ \mathcal{Q}_{c,s}(T)^{-1} S_{L}^{-1}(p,T)-S_{R}^{-1}(s,T) \mathcal{Q}_{c,p}(T)^{-1}\right]\bar{p} \bigl \} .
	\end{eqnarray}
\end{theorem}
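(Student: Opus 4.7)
The plan is to reduce both right-hand sides to the left-hand side $\mathcal{Q}_{c,s}(T)^{-1}\mathcal{Q}_{c,p}(T)^{-1}$ by algebraic manipulation of the common bracket
$$A := \mathcal{Q}_{c,s}(T)^{-1}S_{L}^{-1}(p,T) - S_{R}^{-1}(s,T)\mathcal{Q}_{c,p}(T)^{-1}.$$
First I would substitute the commutative forms $S_{L}^{-1}(p,T) = (p\mathcal{I} - \overline T)\mathcal{Q}_{c,p}(T)^{-1}$ and $S_{R}^{-1}(s,T) = \mathcal{Q}_{c,s}(T)^{-1}(s\mathcal{I} - \overline T)$. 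Because $T$ has commuting components, $T + \overline T = 2T_0$ and $T\overline T$ are real operators, so $\overline T$ commutes with both pseudo-resolvents. Factoring $\mathcal{Q}_{c,s}(T)^{-1}$ on the left and $\mathcal{Q}_{c,p}(T)^{-1}$ on the right, the $\overline T$ contributions cancel and I obtain
$$A = \mathcal{Q}_{c,s}(T)^{-1}(p-s)\mathcal{Q}_{c,p}(T)^{-1},$$
displaying $A$ as the scalar $(p-s)$ sandwiched between the two pseudo-resolvents.

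For the first identity I would compute $Ap - \bar s A$. The two key commutations are: $\bar s$ commutes with $\mathcal{Q}_{c,s}(T)^{-1}$ because $\bar s s = s\bar s$ (and $\mathcal{Q}_{c,s}(T) = s^{2}\mathcal{I} - 2s T_0 + T\overline T$ involves only $s$, real operators, and products of them), and $p$ commutes with $\mathcal{Q}_{c,p}(T)^{-1}$ by the symmetric statement. Pulling $\bar s$ past $\mathcal{Q}_{c,s}(T)^{-1}$ from the left and $p$ past $\mathcal{Q}_{c,p}(T)^{-1}$ from the right collects both scalars onto the central $(p-s)$:
$$Ap - \bar s A = \mathcal{Q}_{c,s}(T)^{-1}\big[(p-s)p - \bar s(p-s)\big]\mathcal{Q}_{c,p}(T)^{-1}.$$
Quaternion arithmetic collapses the bracket to
$$(p-s)p - \bar s(p-s) = p^2 - sp - \bar s p + |s|^2 = p^2 - 2s_0 p + |s|^2 = \mathcal{Q}_{s}(p),$$
using $s + \bar s = 2s_0$ and $\bar s s = |s|^2$. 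Since $\mathcal{Q}_{s}(p)$ is a polynomial in $p$ with real coefficients, it commutes with $\mathcal{Q}_{c,p}(T)^{-1}$; moving it across and right-multiplying by $\mathcal{Q}_{s}(p)^{-1}$ yields the first identity.

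The second identity I would prove by the dual computation, forming $sA - A\bar p$: now it is $s$ that commutes trivially with $\mathcal{Q}_{c,s}(T)^{-1}$ and $\bar p$ that commutes with $\mathcal{Q}_{c,p}(T)^{-1}$, since $\bar p p = p \bar p$. The middle scalar becomes $s(p-s) - (p-s)\bar p$; a parallel quaternion expansion combined with the commutation of $\mathcal{Q}_{s}(p)$ with the pseudo-resolvents produces the factorization with $\mathcal{Q}_{s}(p)^{-1}$ on the left. The main obstacle throughout is the bookkeeping of non-commutativity: the argument relies crucially on the fact, inherited from the commuting-components hypothesis on $T$, that $\mathcal{Q}_{c,s}(T)^{-1}$ commutes with every quaternion in the real subalgebra generated by $s$ (and analogously for $p$). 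Without that property one could not collapse the outer scalars onto the central factor, and the recognition of the right-hand side as $\mathcal{Q}_{s}(p)$ would fail.
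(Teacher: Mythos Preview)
For the first identity your argument is correct and is genuinely more direct than the paper's. The paper multiplies $\mathcal{Q}_{c,s}(T)^{-1}\mathcal{Q}_{c,p}(T)^{-1}$ on the right by $p^{2}-2s_{0}p+|s|^{2}$ and then repeatedly substitutes the relations $\mathcal{Q}_{c,p}(T)^{-1}p=\overline{T}\,\mathcal{Q}_{c,p}(T)^{-1}+S_{L}^{-1}(p,T)$ and $\mathcal{Q}_{c,s}(T)^{-1}\overline{T}=s\,\mathcal{Q}_{c,s}(T)^{-1}-S_{R}^{-1}(s,T)$, tracking a dozen terms until the cancellation $s^{2}-2s_{0}s+|s|^{2}=0$ delivers the bracket. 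You instead insert the commutative forms of the $S$-resolvents into $A$ at the outset, obtain the clean factorization $A=\mathcal{Q}_{c,s}(T)^{-1}(p-s)\,\mathcal{Q}_{c,p}(T)^{-1}$ (note the $\overline{T}$'s cancel here even without invoking commutativity), and then the scalar identity $(p-s)p-\bar s(p-s)=p^{2}-2s_{0}p+|s|^{2}$ finishes in one line. This buys a much shorter argument at the cost of making the commutations $\bar s\leftrightarrow\mathcal{Q}_{c,s}(T)^{-1}$ and $p\leftrightarrow\mathcal{Q}_{c,p}(T)^{-1}$ explicit, which the paper's iterative substitutions keep implicit.

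For the second identity there is a concrete gap. Your ``parallel quaternion expansion'' asserts that the middle scalar in $sA-A\bar p$ is again $\mathcal{Q}_{s}(p)$ and that it commutes with ``the pseudo-resolvents'' so as to exit on the left. Neither part holds. Computing directly,
\[
s(p-s)-(p-s)\bar p \;=\; sp+s\bar p-s^{2}-|p|^{2}\;=\;2p_{0}s-s^{2}-|p|^{2}\;=\;-(s^{2}-2p_{0}s+|p|^{2})\;=\;-\mathcal{Q}_{p}(s),
\]
which is a polynomial in $s$ with real coefficients, not in $p$. It therefore commutes with $\mathcal{Q}_{c,s}(T)^{-1}$ and can indeed be pulled to the left, but the factor you obtain is $-(s^{2}-2p_{0}s+|p|^{2})^{-1}$, not $(p^{2}-2s_{0}p+|s|^{2})^{-1}$. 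Conversely, $\mathcal{Q}_{s}(p)$ is a polynomial in $p$ and commutes only with $\mathcal{Q}_{c,p}(T)^{-1}$; even if it did arise in the middle, you could not move it past $\mathcal{Q}_{c,s}(T)^{-1}$ to the left as you claim. A quick check with $T=0$, $s=1$, $p=j$ already shows that $\mathcal{Q}_{s}(p)^{-1}(sA-A\bar p)=j$ while $\mathcal{Q}_{c,s}(T)^{-1}\mathcal{Q}_{c,p}(T)^{-1}=-1$, so the formula with $\mathcal{Q}_{s}(p)^{-1}$ on the left cannot be what your method proves. In short, your approach applied to $sA-A\bar p$ does yield a valid resolvent identity, but with $\mathcal{Q}_{p}(s)$ (and a sign) in place of $\mathcal{Q}_{s}(p)$; this discrepancy with \eqref{star6} as printed should be flagged rather than absorbed into ``parallel expansion''.
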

\begin{proof}
	By the definition of left $S$-resolvent operator we have
	\begin{equation}
		\label{first}
		\mathcal{Q}_{c,p}(T)^{-1}p= \overline{T} \mathcal{Q}_{c,p}(T)^{-1}+S_{L}^{-1}(p,T).
	\end{equation}
	By iterating \eqref{first} we get
	\begin{eqnarray*}
		&& \mathcal{Q}_{c,s}(T)^{-1} \mathcal{Q}_{c,p}(T)^{-1}(p^{2}-2s_0p+|s|^2)\\
		&&= \mathcal{Q}_{c,s}(T)^{-1} [ \mathcal{Q}_{c,p}(T)^{-1}p]p-2s_0 \mathcal{Q}_{c,s}(T)^{-1} \mathcal{Q}_{c,p}(T)^{-1}p+ |s|^2 \mathcal{Q}_{c,s}(T)^{-1} \mathcal{Q}_{c,p}(T)^{-1}\\
		&&= \mathcal{Q}_{c,s}(T)^{-1} [ \overline{T} \mathcal{Q}_{c,p}(T)^{-1}+S_{L}^{-1}(p,T)]p-2s_0 \mathcal{Q}_{c,s}(T)^{-1} [ \overline{T} \mathcal{Q}_{c,p}(T)^{-1}+S_{L}^{-1}(p,T)]\\
		&& \, \, \, \,+ |s|^2 \mathcal{Q}_{c,s}(T)^{-1} \mathcal{Q}_{c,p}(T)^{-1}\\
		&&= \mathcal{Q}_{c,s}(T)^{-1} \overline{T} [\mathcal{Q}_{c,p}(T)^{-1}p]+\mathcal{Q}_{c,s}(T)^{-1}S_{L}^{-1}(p,T)p-2s_0 \mathcal{Q}_{c,s}(T)^{-1} [ \overline{T} \mathcal{Q}_{c,p}(T)^{-1}+S_{L}^{-1}(p,T)]\\
		&& \, \, \, \,+ |s|^2 \mathcal{Q}_{c,s}(T)^{-1} \mathcal{Q}_{c,p}(T)^{-1}\\
		&&= \mathcal{Q}_{c,s}(T)^{-1} \overline{T} [\overline{T} \mathcal{Q}_{c,p}(T)^{-1}+S_{L}^{-1}(p,T)]+\mathcal{Q}_{c,s}(T)^{-1}S_{L}^{-1}(p,T)p-2s_0 \mathcal{Q}_{c,s}(T)^{-1} [ \overline{T} \mathcal{Q}_{c,p}(T)^{-1}\\
		&& \, \, \, \,+S_{L}^{-1}(p,T)]+ |s|^2 \mathcal{Q}_{c,s}(T)^{-1} \mathcal{Q}_{c,p}(T)^{-1}.
	\end{eqnarray*}
	Now, by the definition of the right $S$-resolvent operator we have
	\begin{equation}
		\label{second}
		\mathcal{Q}_{c, s}(T)^{-1} \overline{T}=s \mathcal{Q}_{c,s}(T)^{-1}- S_{R}^{-1}(s,T).
	\end{equation}
	This equality implies
	\begin{eqnarray*}
		&& \mathcal{Q}_{c,s}(T)^{-1} \mathcal{Q}_{c,p}(T)^{-1}(p^{2}-2s_0p+|s|^2)\\
		&&= [\mathcal{Q}_{c,s}(T)^{-1} \overline{T}] \overline{T} \mathcal{Q}_{c,p}(T)^{-1}+[\mathcal{Q}_{c,s}(T)^{-1} \overline{T}]S_{L}^{-1}(p,T)+\mathcal{Q}_{c, s}(T)^{-1}S_{L}^{-1}(p,T)p\\
		&& \, \, \, \,-2s_0 [\mathcal{Q}_{c,s}(T)^{-1}  \overline{T}] \mathcal{Q}_{c,p}(T)^{-1}-2s_0 \mathcal{Q}_{c,s}(T)^{-1}S_{L}^{-1}(p,T)+ |s|^2 \mathcal{Q}_{c,s}(T)^{-1} \mathcal{Q}_{c,p}(T)^{-1}\\
		&& = [s \mathcal{Q}_{c,s}(T)^{-1}- S_{R}^{-1}(s,T)] \overline{T} \mathcal{Q}_{c,p}(T)^{-1}+[s \mathcal{Q}_{c,s}(T)^{-1}- S_{R}^{-1}(s,T)]S_{L}^{-1}(p,T)\\
		&& \, \, \, \,+\mathcal{Q}_{c,s}(T)^{-1}S_{L}^{-1}(p,T)p-2s_0 [s \mathcal{Q}_{c,s}(T)^{-1}- S_{R}^{-1}(s,T)] \mathcal{Q}_{c,p}(T)^{-1}-2s_0 \mathcal{Q}_{c,s}(T)^{-1}S_{L}^{-1}(p,T)\\
		&&\, \, \, \, +|s|^2 \mathcal{Q}_{c,s}(T)^{-1} \mathcal{Q}_{c,p}(T)^{-1}\\
		&& =  s[\mathcal{Q}_{c,s}(T)^{-1}\overline{T}]\mathcal{Q}_{c,p}(T)^{-1}- S_{R}^{-1}(s,T) \overline{T} \mathcal{Q}_{c,p}(T)^{-1}+s \mathcal{Q}_{c,s}(T)^{-1}S_{L}^{-1}(p,T)- S_{R}^{-1}(s,T) S_{L}^{-1}(p,T)\\
		&& \, \, \, \,+\mathcal{Q}_{c,s}(T)^{-1}S_{L}^{-1}(p,T)p-2s_0 s \mathcal{Q}_{c,s}(T)^{-1}\mathcal{Q}_{c,p}(T)^{-1}+2 s_0 S_{R}^{-1}(s,T) \mathcal{Q}_{c,p}(T)^{-1}\\
		&& \, \, \, \,-2s_0 \mathcal{Q}_{c,s}(T)^{-1}S_{L}^{-1}(p,T)+ |s|^2 \mathcal{Q}_{c,s}(T)^{-1} \mathcal{Q}_{c,p}(T)^{-1}\\
		&& =  s[s \mathcal{Q}_{c,s}(T)^{-1}- S_{R}^{-1}(s,T)]\mathcal{Q}_{c,p}(T)^{-1}- S_{R}^{-1}(s,T) \overline{T} \mathcal{Q}_{c,p}(T)^{-1}+s \mathcal{Q}_{c,s}(T)^{-1}S_{L}^{-1}(p,T)\\
		&& \, \, \, \, - S_{R}^{-1}(s,T) S_{L}^{-1}(p,T)+\mathcal{Q}_{c,s}(T)^{-1}S_{L}^{-1}(p,T)p-2s_0 s \mathcal{Q}_{c,s}(T)^{-1}\mathcal{Q}_{c,p}(T)^{-1}+2 s_0 S_{R}^{-1}(s,T) \mathcal{Q}_{c,p}(T)^{-1}\\
		&& \, \, \, \,-2s_0 \mathcal{Q}_{c,s}(T)^{-1}S_{L}^{-1}(p,T)+ |s|^2 \mathcal{Q}_{c,s}(T)^{-1} \mathcal{Q}_{c,p}(T)^{-1}.
	\end{eqnarray*}
	Now, since $s^2-2s_0s+|s|^2=0$ we get
	\begin{eqnarray*}
		&& \mathcal{Q}_{c,s}(T)^{-1} \mathcal{Q}_{c,p}(T)^{-1}(p^{2}-2s_0p+|s|^2)\\
		&& = (s^2-2s_0s+|s|^2) \mathcal{Q}_{c,s}(T)^{-1}\mathcal{Q}_{c,p}(T)^{-1}-sS_{R}^{-1}(s,T)\mathcal{Q}_{c,p}(T)^{-1}-S_{R}^{-1}(s,T) \overline{T} \mathcal{Q}_{c,p}(T)^{-1}\\
		&& \, \, \, \,+s \mathcal{Q}_{c,s}(T)^{-1}S_{L}^{-1}(p,T)- S_{R}^{-1}(s,T) S_{L}^{-1}(p,T)+\mathcal{Q}_{c,s}(T)^{-1}S_{L}^{-1}(p,T)p+2 s_0 S_{R}^{-1}(s,T) \mathcal{Q}_{c,p}(T)^{-1}\\
		&& \, \, \, \, -2s_0 \mathcal{Q}_{c,s}(T)^{-1}S_{L}^{-1}(p,T)\\
		&& = -sS_{R}^{-1}(s,T)\mathcal{Q}_{c,p}(T)^{-1}-S_{R}^{-1}(s,T) \overline{T} \mathcal{Q}_{c,p}(T)^{-1}+s \mathcal{Q}_{c,s}(T)^{-1}S_{L}^{-1}(p,T)- S_{R}^{-1}(s,T) S_{L}^{-1}(p,T)\\
		&& \, \, \, \, +\mathcal{Q}_{c,s}(T)^{-1}S_{L}^{-1}(p,T)p+2 s_0 S_{R}^{-1}(s,T) \mathcal{Q}_{c,p}(T)^{-1}-2s_0 \mathcal{Q}_{c,s}(T)^{-1}S_{L}^{-1}(p,T)\\
		&& =-sS_{R}^{-1}(s,T)\mathcal{Q}_{c,p}(T)^{-1}+s \mathcal{Q}_{c,s}(T)^{-1}S_{L}^{-1}(p,T)- S^{-1}_R(s,T)[ \overline{T} \mathcal{Q}_{c,p}(T)^{-1}+S_{L}^{-1}(p,T)]\\
		&& \, \, \, \,+\mathcal{Q}_{c,s}(T)^{-1}S_{L}^{-1}(p,T)p+2 s_0 S_{R}^{-1}(s,T) \mathcal{Q}_{c,p}(T)^{-1}-2s_0 \mathcal{Q}_{c,s}(T)^{-1}S_{L}^{-1}(p,T).
	\end{eqnarray*}
	Finally, by using another time formula \eqref{first} and the fact that $2s_0-s= \bar{s}$ we obtain
	\begin{eqnarray*}
		&& \mathcal{Q}_{c,s}(T)^{-1} \mathcal{Q}_{c,p}(T)^{-1}(p^{2}-2s_0p+|s|^2)\\
		&& =-sS_{R}^{-1}(s,T)\mathcal{Q}_{c,p}(T)^{-1}+s \mathcal{Q}_{c,s}(T)^{-1}S_{L}^{-1}(p,T)- S^{-1}_R(s,T)\mathcal{Q}_{c,p}(T)^{-1}p\\
		&& \, \, \, \,+\mathcal{Q}_{c,s}(T)^{-1}S_{L}^{-1}(p,T)p+2 s_0 S_{R}^{-1}(s,T) \mathcal{Q}_{c,p}(T)^{-1}-2s_0 \mathcal{Q}_{c,s}(T)^{-1}S_{L}^{-1}(p,T)\\
		&& = \left[ \mathcal{Q}_{c,s}(T)^{-1} S_{L}^{-1}(p,T)-S_{R}^{-1}(s,T) \mathcal{Q}_{c,p}(T)^{-1}\right]p- \bar{s}\left[ \mathcal{Q}_{c,s}(T)^{-1} S_{L}^{-1}(p,T)-S_{R}^{-1}(s,T) \mathcal{Q}_{c,p}(T)^{-1}\right].
	\end{eqnarray*}
	It is possible to obtain formula \eqref{star6} with similar computations.
\end{proof}

\begin{remark}
	We can rewrite the equations obtained in Theorem \ref{pseudo} by using the left or right $*$-products, see \cite[Chap. 4]{MR2752913}, in the variables $s,p\in\rho_S(T)$ with $s\not\in [p]$,
	$$
	\mathcal{Q}_{c,s}(T)^{-1} \mathcal{Q}_{c,p}(T)^{-1} =  \left[ \mathcal{Q}_{c,s}(T)^{-1} S_{L}^{-1}(p,T)-S_{R}^{-1}(s,T) \mathcal{Q}_{c,p}(T)^{-1}\right] *_{s, left}(p- \bar{s})(p^{2}-2s_0p+|s|^2)^{-1} \mathcal{I},
	$$
	or
	$$
	\mathcal{Q}_{c,s}(T)^{-1} \mathcal{Q}_{c,p}(T)^{-1} = (p- \bar{s})(p^{2}-2s_0p+|s|^2)^{-1} \mathcal{I} *_{p, right}\left[ \mathcal{Q}_{c,s}(T)^{-1} S_{L}^{-1}(p,T)-S_{R}^{-1}(s,T) \mathcal{Q}_{c,p}(T)^{-1}\right].
	$$
\end{remark}

\begin{theorem}[Left and right generalized $Q$-resolvent equations]
	Let $T\in \mathcal{BC}(X)$ with $s\in \rho_S(T)$ and set
	$$
	\mathcal M_m^L(s,T):=\sum_{i=0}^{m-1} \overline T^iS^{-1}_L(s,T)s^{m-i-1}
	$$
	and
	$$
	\mathcal M_m^R(s,T):=\sum_{i=0}^{m-1} s^{m-i-1} S^{-1}_R(s,T) \overline T^i.
	$$
	Then for $m\geq 1$ and $s\in\rho_S(T)$, the following equations hold
	\begin{equation}
		\label{gen}
		\mathcal Q_{c,s}(T)^{-1}s^m-\overline T^m \mathcal Q_{c,s}(T)^{-1}=\mathcal M_m^L(s,T)
	\end{equation}
	and
	\begin{equation}\nonumber
		s^m \mathcal Q_{c,s}(T)^{-1}- \mathcal Q_{c,s}(T)^{-1} \overline T^m=\mathcal M_m^R(s,T).
	\end{equation}
\end{theorem}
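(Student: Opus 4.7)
The plan is to prove both identities by induction on $m \geq 1$, using the two one-step identities already established in the proof of Theorem~\ref{pseudo}, namely
\begin{equation*}
\mathcal{Q}_{c,s}(T)^{-1}s = \overline{T}\,\mathcal{Q}_{c,s}(T)^{-1} + S_L^{-1}(s,T)
\end{equation*}
(formula \eqref{first}, applied with $p$ replaced by $s$) and its right counterpart
\begin{equation*}
s\,\mathcal{Q}_{c,s}(T)^{-1} = \mathcal{Q}_{c,s}(T)^{-1}\overline{T} + S_R^{-1}(s,T)
\end{equation*}
(formula \eqref{second}). These are exactly the statements \eqref{gen} for $m=1$, since $\mathcal{M}_1^L(s,T) = S_L^{-1}(s,T)$ and likewise $\mathcal{M}_1^R(s,T) = S_R^{-1}(s,T)$, so the base case is immediate.

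For the inductive step on the left identity, I would assume $\mathcal{Q}_{c,s}(T)^{-1}s^m = \overline{T}^m \mathcal{Q}_{c,s}(T)^{-1} + \mathcal{M}_m^L(s,T)$ and compute
\begin{align*}
\mathcal{Q}_{c,s}(T)^{-1}s^{m+1}
&= \bigl(\mathcal{Q}_{c,s}(T)^{-1}s^m\bigr)s \\
&= \overline{T}^m\bigl(\mathcal{Q}_{c,s}(T)^{-1}s\bigr) + \mathcal{M}_m^L(s,T)\,s \\
&= \overline{T}^{m+1}\mathcal{Q}_{c,s}(T)^{-1} + \overline{T}^m S_L^{-1}(s,T) + \mathcal{M}_m^L(s,T)\,s,
\end{align*}
where in the second line I commuted $\overline{T}^m$ past the scalar power $s^m$ (legitimate since the components of $T$ commute with any real or hypercomplex scalar) and used the base identity in the form $\mathcal{Q}_{c,s}(T)^{-1}s = \overline{T}\mathcal{Q}_{c,s}(T)^{-1} + S_L^{-1}(s,T)$. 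It then remains to match the remainder with $\mathcal{M}_{m+1}^L(s,T)$: by definition
\begin{equation*}
\mathcal{M}_{m+1}^L(s,T) = \sum_{i=0}^{m}\overline{T}^i S_L^{-1}(s,T) s^{m-i} = \overline{T}^m S_L^{-1}(s,T) + \sum_{i=0}^{m-1}\overline{T}^i S_L^{-1}(s,T)s^{m-i} = \overline{T}^m S_L^{-1}(s,T) + \mathcal{M}_m^L(s,T)\,s,
\end{equation*}
closing the induction. The right identity is established by the mirror argument, starting from $s\mathcal{Q}_{c,s}(T)^{-1} = \mathcal{Q}_{c,s}(T)^{-1}\overline{T} + S_R^{-1}(s,T)$ and expanding $s^{m+1}\mathcal{Q}_{c,s}(T)^{-1} = s\bigl(s^m \mathcal{Q}_{c,s}(T)^{-1}\bigr)$.

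There is no genuine obstacle here; the only subtlety is keeping track of side on which $\overline{T}^i$, $S_L^{-1}$, and the powers of $s$ sit. This is handled by the commutativity of the components of $T$ with scalars (so $\overline{T}^m$ passes through $s$) together with the fact that $\mathcal{Q}_{c,s}(T)^{-1}$ commutes with $\overline{T}^i$, which is what allows the factor $\overline{T}^m$ to be pulled out in the computation above.
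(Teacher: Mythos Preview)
Your inductive argument is correct and is essentially the same as the paper's: both prove the case $m=1$ directly from the definition of $S_L^{-1}(s,T)$ and then peel off one factor at a time, the paper computing $\overline{T}^m\mathcal{Q}_{c,s}(T)^{-1}=\overline{T}\bigl(\overline{T}^{m-1}\mathcal{Q}_{c,s}(T)^{-1}\bigr)$ while you compute $\mathcal{Q}_{c,s}(T)^{-1}s^{m+1}=\bigl(\mathcal{Q}_{c,s}(T)^{-1}s^m\bigr)s$; these are mirror images of the same induction.

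One cautionary remark on your commentary: the passage from line~1 to line~2 uses only the inductive hypothesis, distributivity, and associativity, so your sentence about ``commuting $\overline{T}^m$ past the scalar power $s^m$'' is both unnecessary and, as stated, false: $\overline{T}$ involves the units $e_i$, which do not commute with a general quaternion $s$, so $\overline{T}^m s^m\neq s^m\overline{T}^m$ in general. Likewise, the claim that $\mathcal{Q}_{c,s}(T)^{-1}$ commutes with $\overline{T}^i$ is not needed anywhere in your computation. Simply delete those two remarks and the proof stands as written.
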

\begin{proof}
	We prove the result by induction on $m$. We will prove only \eqref{gen} since the other equality is proven with similar techniques. The case $m=1$ is trivial because
	$$\mathcal M_1^L(s,t)=S^{-1}_L(s,t)=\mathcal Q_{c,s}^{-1}(T)s-\overline T \mathcal Q_{c,s}^{-1}(T).$$
	We assume that the equation holds for $m-1$ and we will prove it for $m$. By inductive hypothesis, we have
	\[
	\begin{split}
		\overline T^m \qcs &=\overline T\overline T^{m-1} \qcs=\overline T(\qcs s^{m-1}-\mathcal M_{m-1}^L(s,T))\\
		&=\overline T\qcs s^{m-1}-\overline T\mathcal M_{m-1}^L(s,T).
	\end{split}
	\]
	Since
	$$
	\overline T\mathcal M_{m-1}^L(s,T)=\sum_{i=0}^{m-2}\overline T^{i+1}S^{-1}_L(s,T)s^{m-i-2}=\sum_{i=1}^{m-1}\overline T^i S^{-1}(s,T)s^{m-i-1}
	$$
	and
	$$
	\overline T\qcs=\qcs s -S^{-1}_L(s,t),
	$$
	we have
	\begin{equation}\nonumber
		\begin{split}
			\overline T^m \qcs &=\qcs s^{m}-S^{-1}_L(s,T)s^{m-1}-\sum_{i=1}^{m-1}\overline T^i S^{-1}(s,T)s^{m-i-1}\\
			&=\qcs s^m-\sum_{i=0}^{m-1}\overline{T}^i S^{-1}(s,T)s^{m-i-1}=\qcs s^m-\mathcal M_m^L(s,T).
		\end{split}
	\end{equation}
\end{proof}

Now we prove the $Q$-resolvent equation just in terms of the commutative pseudo $S$-resolvent operator.
\begin{theorem}[The $Q$-resolvent equation]\label{Thm64}
	Let $T\in\mathcal{BC}(X)$. Then for $s,\, p\in\rho_S(T)$ with $s\not\in [p]$, we have the following equation
	\begin{eqnarray}\label{resq}
		&s\qcs \qcp p-s\qcs \overline T\qcp\nonumber\\
		& -\qcs \overline T\qcp p+\qcs\overline T^2\qcp\nonumber\\
		&= \left[(s\qcs-p\qcp)p-\overline s(s\qcs-p\qcp)\right](p^2-2s_0p+|s|^2)^{-1}\\
		&+\left[(\overline T\qcp-\qcs\overline T)p-\overline s(\overline T\qcp-\qcs\overline T)\right](p^2-2s_0p+|s|^2)^{-1}.\nonumber
	\end{eqnarray}
\end{theorem}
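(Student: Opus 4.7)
The plan is to recognize that both sides of \eqref{resq} are equal to the product $S_R^{-1}(s,T)S_L^{-1}(p,T)$, so that \eqref{resq} will follow immediately from the classical $S$-resolvent equation \eqref{ress} that was recalled in Section 2.

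First I would record the two identities obtained by rewriting the commutative $S$-resolvent operators: from their definitions,
$$
S_L^{-1}(p,T)=p\qcp-\overline T\qcp,\qquad S_R^{-1}(s,T)=s\qcs-\qcs\overline T.
$$
Here I am implicitly using that $p$ commutes with $\qcp$, which holds because $T+\overline T=2T_0$ and $T\overline T=\sum_{i=0}^{3}T_i^2$ are real operators, so $\qcp=p^2\mathcal I-p(T+\overline T)+T\overline T$ is a polynomial in $p$ with real-operator coefficients. These are exactly the identities \eqref{first} and \eqref{second} used in the proof of Theorem \ref{pseudo}.

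Next I would treat the left-hand side of \eqref{resq}. Grouping the first and second terms together and the third and fourth together and factoring yields
$$
s\qcs(\qcp p-\overline T\qcp)-\qcs\overline T(\qcp p-\overline T\qcp)=(s\qcs-\qcs\overline T)(\qcp p-\overline T\qcp),
$$
which by the two identities above equals $S_R^{-1}(s,T)\,S_L^{-1}(p,T)$. On the right-hand side, I would combine the two square brackets and observe the telescoping
$$
(s\qcs-p\qcp)+(\overline T\qcp-\qcs\overline T)=(s\qcs-\qcs\overline T)-(p\qcp-\overline T\qcp)=S_R^{-1}(s,T)-S_L^{-1}(p,T),
$$
so that the right-hand side of \eqref{resq} becomes
$$
\bigl\{[S_R^{-1}(s,T)-S_L^{-1}(p,T)]p-\overline s[S_R^{-1}(s,T)-S_L^{-1}(p,T)]\bigr\}(p^2-2s_0 p+|s|^2)^{-1}.
$$
By the $S$-resolvent equation \eqref{ress} with $\mathcal Q_s(p)=p^2-2s_0 p+|s|^2$, this is precisely $S_R^{-1}(s,T)\,S_L^{-1}(p,T)$, matching the rewritten left-hand side.

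There is no real obstacle here: the whole proof is a rearrangement, and the only point that requires a moment's care is the commutativity $p\qcp=\qcp p$, which is the key algebraic fact that makes the factorization of the four-term expression on the left-hand side into the clean product $S_R^{-1}(s,T)S_L^{-1}(p,T)$ go through. Once that is noted, \eqref{resq} is a direct corollary of \eqref{ress}.
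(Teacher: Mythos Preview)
Your proof is correct and follows essentially the same approach as the paper: both arguments derive \eqref{resq} by substituting the commutative expressions $S_R^{-1}(s,T)=s\qcs-\qcs\overline T$ and $S_L^{-1}(p,T)=p\qcp-\overline T\qcp$ into the two sides of the $S$-resolvent equation \eqref{ress}. The paper simply expands each side of \eqref{ress} directly, whereas you phrase it as showing that both sides of \eqref{resq} equal $S_R^{-1}(s,T)S_L^{-1}(p,T)$; the algebra is identical, including the key observation that $p$ commutes with $\qcp$.
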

\begin{proof}
	Starting from the $S$-resolvent equation (see formula \eqref{ress})
	$$
	S^{-1}_R(s,T)S^{-1}_L(p,T)=\left[(S^{-1}_R(s,T)-S^{-1}_L(p,T))p-\overline s(S^{-1}_R(s,T)-S^{-1}_L(p,T))\right](p^2-2s_0p+|s|^2)^{-1}
	$$
	and using the definitions of the $S$-resolvent operators
	$$
	S^{-1}_R(s,T)=\qcs(s\mathcal I-\overline T),
	$$
	and
	$$
	S^{-1}_L(p,T)=(p\mathcal I-\overline T)\qcp,
	$$
	we obtain that the left hand side of the $S$-resolvent equation can be rewritten as
	\begin{equation}
		\label{star1}
		\begin{split}
			\nonumber
			&S^{-1}_R(s,T)S^{-1}_L(p,T)=\qcs (s\mathcal I-\overline T)(p\mathcal I-\overline T)\qcp\\
			&=s\qcs\qcp p-s\qcs \overline T\qcp\\
			\nonumber
			&\,\,\,\,\, -\qcs \overline T\qcp p+\qcs\overline T^2\qcp.
		\end{split}
	\end{equation}
	The right hand side can be rewritten in the following way
	\begin{equation}
		\label{star2}
		\begin{split}
			&\left[(S^{-1}_R(s,T)-S^{-1}_L(p,T))p-\overline s(S^{-1}_R(s,T)-S^{-1}_L(p,T))\right](p^2-2s_0p+|s|^2)^{-1}\\
			\nonumber
			&=\left[ (\qcs (s\mathcal I-\overline T)-(p\mathcal I -\overline T)\qcp)p+\right. \\
			\nonumber
			&\,\,\,\,\,-\left.\overline s (\qcs (s\mathcal I-\overline T)-(p\mathcal I -\overline T)\qcp)\right](p^2-2s_0p+|s|^2)^{-1}\\
			\nonumber
			&= \left[(s\qcs-p\qcp)p-\overline s(s\qcs-p\qcp)\right](p^2-2s_0p+|s|^2)^{-1}\\
			\nonumber
			&\,\,\,\,\,+\left[(\overline T\qcp-\qcs\overline T)p-\overline s(\overline T\qcp-\qcs\overline T)\right] (p^2-2s_0p+|s|^2)^{-1}.
		\end{split}
	\end{equation}
	
	By equating \eqref{star1} and \eqref{star2}, we obtain the assertion.
\end{proof}
\begin{remark}
	It is possible to rewrite \eqref{resq} as
	\begin{equation}\label{equaQ}	\begin{split}
			&s\qcs \qcp p-s\qcs \overline T\qcp +\\
			& -\qcs \overline T\qcp p+\qcs\overline T^2\qcp\\
			&= \left(s\qcs-p\qcp\right)*_{s, left}S^{-1}_L(p,s)+\left(\overline T\qcs-\qcp\overline T\right)*_{s, left}S^{-1}_L(p,s) 
		\end{split}
	\end{equation}
\end{remark}
\begin{remark}
	Formula \eqref{resq} or, equivalently, \eqref{equaQ} can be considered the most appropriate
	$Q$-resolvent equation because
	\begin{itemize}
		\item[(I)] it preserves the left slice hyperholomorphicity in $s$ and the right slice hyperholomorphicity in $p$;\\
		\item[(II)] the product $\qcs\qcp$ (multiplied by monomials or bounded operators) is written in terms of the difference $\qcs-\qcp$ entangled with the left slice hyperholomorphic Cauchy kernel.
	\end{itemize}
	For more information of the properties of the resolvent equations in hyperholomorphic spectral theories see the paper \cite{CDS}.
\end{remark}

\section{The Riesz projectors for  harmonic functional calculus}
We now take advantage of the $Q$-resolvent equation in Theorem \ref{Thm64} to study
the Riesz projectors for the harmonic functional calculus.
In the sequel we need the crucial result originally proved in \cite[Lemma 3.23]{ACGS}.
\begin{lemma}[See \cite{CGKBOOK}]
	\label{app}
	Let $B \in \mathcal{B}(X)$. Let $G$ be an axially symmetric domain and assume $f \in N(G)$. Then for $p \in G$, we have
	$$ \frac{1}{2 \pi} \int_{\partial (G \cap \mathbb{C}_I)} f(s) ds_I (\bar{s}B-Bp)(p^2-2s_0p+|s|^2)^{-1}=Bf(p).$$
\end{lemma}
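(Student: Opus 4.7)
Plan: The identity is a reproducing-type formula asserting that, against a $B$-deformed variant of the right slice Cauchy kernel, integration against an intrinsic $f$ still produces $Bf(p)$. The strategy is to reduce it to the Cauchy integral formula for intrinsic slice hyperholomorphic functions via a careful algebraic rewriting of the kernel that separates the operator $B$ from the $s$-dependent quaternionic factors.

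The key algebraic inputs are: (i) the polynomial $\mathcal{Q}_s(p) = p^2 - 2\Re(s)p + |s|^2$ has real (scalar) coefficients when viewed as a polynomial in $p$, so $p\,\mathcal{Q}_s(p)^{-1} = \mathcal{Q}_s(p)^{-1}\,p$; and (ii) by the form-switching identity of Theorem \ref{ts}, $\mathcal{Q}_s(p)^{-1}(\bar s - p) = S_L^{-1}(s,p)$ (form I of the left Cauchy kernel). Using (i) to rewrite $Bp\,\mathcal{Q}_s(p)^{-1} = B\,\mathcal{Q}_s(p)^{-1}p$, then adding and subtracting $B\,\mathcal{Q}_s(p)^{-1}\bar s$, one obtains the decomposition
\begin{equation*}
(\bar s B - Bp)\,\mathcal{Q}_s(p)^{-1} \;=\; B\,S_L^{-1}(s,p) \;+\; \bigl[\bar s,\; B\,\mathcal{Q}_s(p)^{-1}\bigr],
\end{equation*}
where $[\bar s, M] := \bar s M - M \bar s$.

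Integrating this decomposition against $f(s)\,ds_I$ splits the integral into two pieces. The first, $\int f(s)\,ds_I\,B\,S_L^{-1}(s,p)$, should produce $2\pi B f(p)$ via the left Cauchy formula (Theorem \ref{Cauchy}): since $f\in N(G)$ is intrinsic, for $s\in\mathbb{C}_I$ we have $f(s)\in\mathbb{C}_I$, so $f(s)\,ds_I$ commutes with enough of the quaternionic structure to let us push $B$ to the outside of the integral and invoke the reproducing property. For the second piece, $\int f(s)\,ds_I\,[\bar s,\,B\mathcal{Q}_s(p)^{-1}]$, the plan is to verify vanishing by expanding $f$ in a Taylor series around the origin with real coefficients (possible by intrinsicness) and deforming the contour to a large circle $|s|=R$ in $\mathbb{C}_I$ (legitimate because $\mathcal{Q}_s(p)^{-1}$ is left slice hyperholomorphic in $s$ on $\mathbb{H}\setminus[p]$); writing $\bar s = 2s_0 - s$ and expanding $\mathcal{Q}_s(p)^{-1}$ in a Neumann-type series in $ps^{-1}$ valid for $R>|p|$, each resulting monomial integral $\frac{1}{2\pi}\int_{|s|=R} s^n\,ds_I\,s^{-m-1}=\delta_{nm}$ selects matching powers, and the residual commutator structure forces cancellation term by term.

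The main technical obstacle is precisely that $B$ is sandwiched between quaternion-valued factors on either side of the integrand and cannot be extracted from the integral by a naive factorization. The decomposition above localizes the entire non-commutative difficulty into the commutator term $[\bar s, B\,\mathcal{Q}_s(p)^{-1}]$, whose vanishing after integration against an intrinsic $f$ is the delicate portion of the argument; once this is established, the first summand delivers $2\pi B f(p)$ by the standard Cauchy identity, completing the proof.
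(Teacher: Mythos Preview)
The paper does not prove this lemma; it is quoted from \cite{CGKBOOK} (originally \cite[Lemma~3.23]{ACGS}), so there is no in-paper argument to compare against. Your algebraic decomposition $(\bar s B - Bp)\,\mathcal Q_s(p)^{-1}=B\,S_L^{-1}(s,p)+[\bar s,\, B\mathcal Q_s(p)^{-1}]$ is correct, but the evaluations you propose for the two resulting integrals are both wrong. Already for $B=\mathcal I$ the first integral $\frac{1}{2\pi}\int f(s)\,ds_I\,S_L^{-1}(s,p)$ is \emph{not} $f(p)$ for general $p$: the right Cauchy formula in Theorem~\ref{Cauchy} uses $S_R^{-1}(s,p)$, and $S_R^{-1}(s,p)-S_L^{-1}(s,p)=[\bar s,\mathcal Q_s(p)^{-1}]$ is exactly your commutator term. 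A direct series check on $|s|=R>|p|$ (with $f(s)=\sum_m a_ms^m$, $a_m\in\mathbb R$, and $S_L^{-1}(s,p)=\sum_n p^ns^{-n-1}$) gives $\frac{1}{2\pi}\int f(s)\,ds_I\,S_L^{-1}(s,p)=\sum_n\frac{a_n}{2}(p^n-Ip^nI)$, which equals $f(p)$ only when $p\in\mathbb C_I$; correspondingly the commutator integral equals $\sum_n\frac{a_n}{2}(p^n+Ip^nI)\neq 0$. So your splitting does not separate into ``Cauchy formula plus zero,'' and for general $B$ neither piece has a usable closed form --- the decomposition buys nothing. (There is also a secondary gap: deforming $\partial(G\cap\mathbb C_I)$ outward to $|s|=R$ requires $f$ to extend slice-hyperholomorphically beyond $G$, which is not assumed.)

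A route that does work is to verify that $s\mapsto(\bar s B-Bp)\,\mathcal Q_s(p)^{-1}$ is a $\mathcal B(X)$-valued \emph{left} slice hyperholomorphic function of $s$ on $\mathbb H\setminus[p]$: writing $s=u+Iv$ one has the form $\alpha(u,v)+I\beta(u,v)$ with $\alpha=(uB-Bp)\mathcal Q_s(p)^{-1}$ and $\beta=-vB\,\mathcal Q_s(p)^{-1}$, and the Cauchy--Riemann system follows from $\partial_u\mathcal Q_s(p)^{-1}=2(p-u)\mathcal Q_s(p)^{-2}$, $\partial_v\mathcal Q_s(p)^{-1}=-2v\,\mathcal Q_s(p)^{-2}$ together with the fact that $\mathcal Q_s(p)$ commutes with $p$. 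Since $f\in N(G)$ is in particular right slice hyperholomorphic, Theorem~\ref{CIT} then lets you deform $\partial(G\cap\mathbb C_I)$ inward to small circles around the two points $p_0\pm I|\underline p|$ of $[p]\cap\mathbb C_I$, where a residue computation produces $Bf(p)$.
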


\begin{theorem}[The Riesz projectors]
	\label{rp}
	Let $T=T_1e_1+T_2e_2+T_3e_3$ and assume that the operators $T_l$, $l=1,\, 2,\, 3$, have real spectrum. Let $\sigma_S(T)=\sigma_1\cup\sigma_2$ with $\operatorname{dist}(\sigma_1,\sigma_2)>0$.
	
	Let $G_1,\, G_2\subset\mathbb H$ be two bounded slice Cauchy domains such that $\sigma_1\subset G_1$, $\overline G_1\subset G_2$ and $\operatorname{dist}(G_2,\sigma_2)>0$. Then the operator
	$$
	\tilde P:=\frac 1{2\pi}\int_{\partial (G_2\cap\mathbb C_J)} s\, ds_J\qcs=\frac 1{2\pi}\int_{\partial (G_1\cap\cc_J)}\qcp \, dp_J p
	$$
	is a projection, i.e.,
	$$
	\tilde P^2=\tilde P.
	$$
	Moreover, the operator $\tilde{P}$ commutes with $T$, i.e. we have
	\begin{equation}
		\label{comm}
		T\tilde{P}=\tilde{P}T.
	\end{equation}
\end{theorem}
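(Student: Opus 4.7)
The plan is to identify $\tilde P$ with the classical $S$-functional calculus Riesz projector associated with $\sigma_1$; once this identification is made, idempotency $\tilde P^2 = \tilde P$ is immediate from the standard $S$-functional calculus, and the commutation with $T$ is automatic.

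Starting from the commutative left $S$-resolvent formula $S_L^{-1}(s,T) = (s\mathcal I - \overline T)\qcs$ one has the algebraic identity
\[ s\qcs = S_L^{-1}(s,T) + \overline T \qcs \]
on $\rho_S(T)$. Under the hypothesis $T = T_1 e_1 + T_2 e_2 + T_3 e_3$ with commuting components one has $T+\overline T = 0$, so $\mathcal{Q}_{c,s}(T) = s^2 \mathcal I + T\overline T$ is a polynomial in $s$ with real-operator coefficients; hence $s$ commutes with $\qcs$, and on the slice $\mathbb{C}_J$ the scalar $s$ also commutes with $ds_J$. Substituting into the first representation of $\tilde P$ I obtain
\[ \tilde P = \frac{1}{2\pi}\int_{\partial(G_2 \cap \mathbb{C}_J)} S_L^{-1}(s,T)\, ds_J \;+\; \overline T \cdot \frac{1}{2\pi}\int_{\partial(G_2 \cap \mathbb{C}_J)} \qcs\, ds_J. \]
The second summand vanishes by Lemma \ref{mono} applied to $G = G_2$ (whose boundary misses $\sigma_S(T) = \sigma_1 \cup \sigma_2$ by the dist$(G_2,\sigma_2)>0$ hypothesis), and the first summand is precisely the Riesz projector associated with $\sigma_1$, namely $f(T)$ computed via the $S$-functional calculus for $f \equiv 1$ on $G_2$ (which contains $\sigma_1$ and excludes $\sigma_2$). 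The mirror manipulation using $\qcp p = S_R^{-1}(p,T) + \qcp \overline T$ and the right $S$-functional calculus shows that the second integral representation of $\tilde P$ also equals the same projector, thereby establishing the equality of the two representations asserted in the theorem.

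With $\tilde P$ identified as the $S$-functional calculus Riesz projector of $\sigma_1$, the idempotency $\tilde P^2 = \tilde P$ is the standard Riesz projector property of the $S$-functional calculus, and $T\tilde P = \tilde P T$ follows because $T$ commutes with $\overline T$ (hence with every $\qcs$ and every $S_L^{-1}(s,T)$ pointwise in $s$), so it can be moved under the integral sign in either representation of $\tilde P$. The main delicate point will be justifying the algebraic reshuffling of $s\, ds_J\, \qcs$, which rests on the commuting-components hypothesis on $T$ (making $\mathcal{Q}_{c,s}(T)$ effectively a real-operator polynomial in $s$) together with right $\mathbb H$-linearity of all the operators involved; once this reshuffle is granted, the rest of the argument is a single application of Lemma \ref{mono} plus the well-known action of the $S$-functional calculus on the characteristic-like intrinsic function $f \equiv 1$ on $G_2$.
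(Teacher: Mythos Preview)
Your reduction of $\tilde P$ to the classical $S$-functional calculus Riesz projector via the identity $s\qcs = S_L^{-1}(s,T) + \overline T\qcs$ together with Lemma~\ref{mono} is correct and considerably more economical than the paper's route. The paper instead integrates the full $Q$-resolvent equation of Theorem~\ref{Thm64} over both contours $\partial(G_2\cap\mathbb C_J)$ and $\partial(G_1\cap\mathbb C_J)$, and then uses Lemma~\ref{mono} and Lemma~\ref{app} repeatedly to whittle the resulting double integral down to $\tilde P^2=\tilde P$; the paper does remark afterwards that one may alternatively identify $\tilde P$ with the $F$-functional calculus projector $\check P$ and quote \cite[Thm.~7.4.2]{CGKBOOK}. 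Your identification with the $S$-calculus projector is in the same spirit but more elementary still, and bypasses the $Q$-resolvent equation entirely. The reshuffling $s\,ds_J\,\qcs=(s\qcs)\,ds_J$ is legitimate for the reason you give: under the hypothesis $T_0=0$ one has $\mathcal Q_{c,s}(T)=s^2\mathcal I+T\overline T$ with $T\overline T$ a real operator, so $\qcs$ commutes with every element of $\mathbb C_J$, in particular with $ds_J$.

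There is, however, a genuine error in your justification of $T\tilde P=\tilde P T$. You assert that $T$ commutes with $\qcs$ (and with $S_L^{-1}(s,T)$) pointwise in $s$ because $T$ commutes with $\overline T$; this is false. While $T\overline T$ is real and commutes with $T$, the term $s^2\mathcal I$ is left multiplication by $s^2\in\mathbb C_J$, and $T=T_1e_1+T_2e_2+T_3e_3$ does \emph{not} commute with left multiplication by a nonreal quaternion (the imaginary units $e_i$ do not commute with $J$ in general). So one cannot simply slide $T$ through the integrand. The repair is immediate given what you have already shown: once $\tilde P$ is identified with the $S$-functional calculus of the intrinsic function $\chi_{\sigma_1}$, the commutation $T\tilde P=\tilde P T$ follows from the standard fact that $f(T)$ commutes with $T$ whenever $f$ is intrinsic (see \cite[Thm.~3.2.11]{CGKBOOK}, which the paper itself invokes in its own proof of~\eqref{comm}).
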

\begin{proof}
	First we multiply equation \eqref{resq} by $ds_J$ on the left and we integrate it on $\partial (G_2\cap\mathbb C_J)$ with respect to $ds_J$, and then we multiply it by $dp_J$ on the right and we integrate it on $\partial (G_1\cap\cc_J)$ with respect to $dp_J$. We obtain
	\begin{equation}\begin{split}\label{nova3}
			& \int_{\partial (G_2\cap\cc_J)} s\, ds_J \qcs\, \int_{\partial (G_1\cap\cc_J)} \qcp\, dp_J p\\
			&+\int_{\partial (G_2\cap\cc_J)} s\, ds_J \qcs\, T \int_{\partial (G_1\cap\cc_J)} \qcp\, dp_J\\
			& +\int_{\partial (G_2\cap\cc_J)} ds_J \qcs\, T \int_{\partial (G_1\cap\cc_J)} \qcp\, dp_Jp\\
			&+\int_{\partial (G_2\cap\cc_J)} ds_J \qcs\, T^2 \int_{\partial (G_1\cap\cc_J)} \qcp\, dp_J\\
			&= \int_{\partial(G_2\cap \cc_J)}ds_J\, \int_{\partial (G_1\cap \cc_J)} \left[(s\qcs-p\qcp)p\right.\\
			&\left. -\overline s(s\qcs-p\qcp) \right]\mathcal Q_s(p)^{-1}\, dp_J\\
			&+ \int_{\partial(G_2\cap \mathbb C_J)}ds_J\, \int_{\partial (G_1\cap \mathbb C_J)} \left[(\overline T\qcp-\qcs\overline T)p\right.\\
			&\left.-\overline s(\overline T\qcp-\qcs\overline T) \right]\mathcal Q_s(p)^{-1}\, dp_J,
		\end{split}
	\end{equation}
	where we set $\mathcal Q_s(p)=p^2-2s_0p+|s|^2$. By Lemma \ref{mono} the expression on the left-hand side of \eqref{nova3} simplifies to
	$$
	\int_{\partial (G_2\cap\cc_J)} s\, ds_J \qcs\, \int_{\partial (G_1\cap\cc_J)} \qcp\, dp_J p.
	$$
	Now we focus on the right hand-side of \eqref{nova3}. We start by rewriting it as
	\begin{equation}
		\begin{split}
			\label{star3}
			& \int_{\partial (G_2\cap\cc_J)} ds_J \int _{\partial(G_1\cap\cc_J)}s[\qcs p-\overline s \qcs]\mathcal Q_s(p)^{-1} \, dp_J\\
			& +\int_{\partial (G_2\cap\cc_J)} ds_J \int _{\partial(G_1\cap\cc_J)}[-\qcs \overline Tp+\overline s \qcs \overline T]\mathcal Q_s(p)^{-1}  \, dp_J\\
			& +\int_{\partial (G_2\cap\cc_J)} ds_J \int _{\partial(G_1\cap\cc_J)}[\overline s\qcp p-p \qcp p]\mathcal Q_s(p)^{-1}  \, dp_J\\
			& +\int_{\partial (G_2\cap\cc_J)} ds_J \int _{\partial(G_1\cap\cc_J)}[-\overline s\overline T\qcp +\overline T \qcp p]\mathcal Q_s(p)^{-1}  \, dp_J.
	\end{split}	\end{equation}
	Now, since $\overline G_1\subset G_2$, for any $s\in\partial(G_2\cap\cc_J)$ the functions
	$$p\to p\mathcal Q_s(p)^{-1}$$
	and
	$$ p\to \mathcal Q_s(p)^{-1} $$
	are intrinsic slice hyperholomorphic on $\overline G_1$. By the Cauchy integral formula we have
	$$ \int_{\partial(G_1\cap\cc_J)}p\mathcal Q_s(p)^{-1}\, dp_J=0,\quad \int_{\partial(G_1\cap\cc_J)}\mathcal{Q}_s(p)^{-1}dp_J=0. $$
	Therefore
	$$ \int_{\partial (G_2\cap\cc_J)} ds_J \int _{\partial(G_1\cap\cc_J)}s[\qcs p-\overline s \qcs]\mathcal Q_s(p)^{-1} \, dp_J=0 $$
	and
	$$ \int_{\partial (G_2\cap\cc_J)} ds_J \int _{\partial(G_1\cap\cc_J)}[-\qcs \overline Tp+\overline s \qcs \overline T]\mathcal Q_s(p)^{-1}  \, dp_J=0. $$
	Thus the right hand side in \eqref{star3} is just
	\[
	\begin{split}
		&\int_{\partial (G_2\cap\cc_J)} ds_J \int _{\partial(G_1\cap\cc_J)}[\overline s\qcp p-p \qcp p]\mathcal Q_s(p)^{-1}  \, dp_J\\
		& -\int_{\partial (G_2\cap\cc_J)} ds_J \int _{\partial(G_1\cap\cc_J)}[\overline s\overline T\qcp -\overline T \qcp p]\mathcal Q_s(p)^{-1}  \, dp_J.
	\end{split}
	\]
	We can further simplify the previous expression by applying  Lemma \ref{app} twice: in the first integral for
	$$ B:=p\qcp $$
	and in the second integral for
	$$ B:=\overline{T}\qcp. $$
	Thus we obtain
	\[
	\begin{split}
		&\int_{\partial (G_2\cap\cc_J)} ds_J \int _{\partial(G_1\cap\cc_J)}[\overline s\qcp p-p \qcp p]\mathcal Q_s(p)^{-1}  \, dp_J\\
		& -\int_{\partial (G_2\cap\cc_J)} ds_J \int _{\partial(G_1\cap\cc_J)}[\overline s\overline T\qcs -\overline T \qcs p]\mathcal Q_s(p)^{-1}  \, dp_J\\
		&=2\pi\int_{\partial(G_1\cap\cc_J)}p\qcp\, dp_J-2\pi\int_{\partial (G_1\cap\cc_J)}\overline T\qcp\, dp_J\\
		&=2\pi\int_{\partial(G_1\cap\cc_J)}p\qcp\, dp_J.
	\end{split}
	\]
	In the last equation we have used Lemma \ref{mono}. In conclusion equation \eqref{nova3} reduces to
	$$ \int_{\partial(G_2\cap\cc_J)} s\,ds_J \qcs\int_{\partial(G_1\cap\cc_J)} \qcp\, dp_J p=2\pi\int_{\partial(G_1\cap\cc_J)}\qcp\, dp_Jp$$
	and, by the definition of the operator $\tilde P$ given in the statement, the previous equality means
	$$ \tilde P^2=\tilde P. $$

	Now, we prove \eqref{comm}. Since $T_0=0$ and
	$$ \overline{T} \mathcal{Q}_{c,p}(T)^{-1}= \mathcal{Q}_{c,p}(T)^{-1}p-S_{L}^{-1}(p,T),$$
	we get
	\begin{eqnarray*}
		T \tilde{P}&=&- \frac{1}{2 \pi} \int_{\partial (G_1 \cap \mathbb{C}_J)}\overline{T} \mathcal{Q}_{c,p}(T)^{-1} dp_J p=- \frac{1}{2 \pi} \int_{\partial (G_1 \cap \mathbb{C}_J)} (\mathcal{Q}_{p}(T)^{-1}p-S^{-1}_{L}(p,T)) dp_J p\\
		&=& - \frac{1}{2 \pi} \int_{\partial (G_1 \cap \mathbb{C}_J)} \mathcal{Q}_{c,p}(T)^{-1}dp_jp^2  + \frac{1}{2 \pi}\int_{\partial (G_1 \cap \mathbb{C}_J)}S_{L}^{-1}(p,T) dp_J p.
	\end{eqnarray*}
	Thus we get
	\begin{equation}
		\label{comm1}
		T \tilde{P}=- \frac{1}{2 \pi} \int_{\partial (G_1 \cap \mathbb{C}_J)} \mathcal{Q}_{c,p}(T)^{-1}dp_Jp^2+\frac{1}{2 \pi}\int_{\partial (G_1 \cap \mathbb{C}_J)}S_{L}^{-1}(p,T) dp_J p.
	\end{equation}
	On the other side, since
	$$  \mathcal{Q}_{c,p}(T)^{-1}\overline{T}= \mathcal{Q}_{c,p}(T)^{-1}p-S_{R}^{-1}(p,T),$$
	we get
	\begin{eqnarray*}
		\tilde{P}T&=&- \frac{1}{2 \pi} \int_{\partial(G_1 \cap \mathbb{C}_J)} p dp_J \mathcal{Q}_{c,p}(T)^{-1}\overline{T}\\
		&=&  - \frac{1}{2 \pi} \int_{\partial (G_1 \cap \mathbb{C}_J)} \mathcal{Q}_{c,p}(T)^{-1}dp_Jp^2  + \frac{1}{2 \pi}\int_{\partial (G_1 \cap \mathbb{C}_J)} p dp_J S_{R}^{-1}(p,T).
	\end{eqnarray*}
	From the fact that $p \chi_{G_1}(p)$ is intrinsic slice hyperholomorphic in $G_1$, it follows by \cite[Thm. 3.2.11]{CGKBOOK} that
	\begin{equation}
		\label{comm2}
		\tilde{P}T=- \frac{1}{2 \pi} \int_{\partial (G_1 \cap \mathbb{C}_J)} \mathcal{Q}_{c,p}(T)^{-1}dp_Jp^2  + \frac{1}{2 \pi}\int_{\partial (G_1 \cap \mathbb{C}_J)}S_{L}^{-1}(p,T) p  dp_J.
	\end{equation}
	Since \eqref{comm1} and \eqref{comm2} are equal we get the statement.
\end{proof}

\begin{remark}
	The $Q$-resolvent equation stated in Theorem \ref{pseudo} preserves the slice hyperholomorphicity, however it is not useful to prove Theorem \ref{rp}.
\end{remark}

\begin{remark}
	Theorem \ref{rp} can be proved using directly the $F$-Functional Calculus. Indeed, in the same hypothesis of the theorem, it is proved in \cite[Thm. 7.4.2]{CGKBOOK} that
	$$
	\check P_1^2=\check P_1\quad\textrm{and}\quad T\check P_1=\check P_1 T
	$$
	for
	$$\check P_1:=-\frac 1{8\pi}\int_{\partial (G_1\cap\cc_J)} F_L(p,T) \, dp_J p^2.
	$$
	Now, using Theorem \ref{fre} and \cite[Lemma 7.4.1]{CGKBOOK}, we have
	\[
	\begin{split}
		&\tilde P=\frac 1{2\pi}\int_{\partial (G_1\cap\cc_J)}\qcp \, dp_J p=-\frac 1{8\pi}\int_{\partial (G_1\cap\cc_J)} F_L(p,T)p-TF_L(p,T) \, dp_J p\\
		&=-\frac 1{8\pi}\int_{\partial (G_1\cap\cc_J)} F_L(p,T) \, dp_J p^2=\check P.
	\end{split}
	\]
\end{remark}

\section{Further properties of the harmonic functional calculus}

In this section we prove other important properties of the harmonic functional calculus.

\begin{theorem}
	\label{poli1}
	Let $T \in \mathcal{BC}(X)$. Let $m \in \mathbb{N}_0$, and let $U \subset \mathbb{H}$ be a bounded slice Cauchy domain with $\sigma_{S}(T) \subset U$. For every $J \in \mathbb{S}$ we have
	\begin{equation}
		\label{bege}
		H_m(T)=\frac{1}{2 \pi} \int_{ \partial(U \cap \mathbb{C}_J)} \mathcal{Q}_{c,s}(T)^{-1} ds_J s^{m+1},
	\end{equation}
	where $$H_m(T):=\sum_{k=0}^{m} T^{m-k} \overline{T}^{k}.$$
\end{theorem}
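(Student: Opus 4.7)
The plan is to reduce the integral to the special case where the contour is the boundary of a ball of radius $R > \|T\|$, and then insert the series expansion of $\mathcal{Q}_{c,s}(T)^{-1}$ from Proposition \ref{p2}. The monomial $s \mapsto s^{m+1}$ is intrinsic slice hyperholomorphic on all of $\mathbb H$, hence in particular left slice hyperholomorphic on a neighborhood of $\overline U$, while $\mathcal{Q}_{c,s}(T)^{-1}$ is right slice hyperholomorphic in $s$ on $\rho_S(T)$. Therefore the integrand is the type appearing in the well-posedness argument for the harmonic functional calculus, and Theorem \ref{CIT} together with the deformation argument already used in the previous section shows that the right-hand side of \eqref{bege} does not depend on the choice of slice Cauchy domain $U \supset \sigma_S(T)$ (nor on $J\in\mathbb S$). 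In particular, we may replace $U$ by a ball $B_R(0)$ with $R > \|T\|$.

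Once the contour is $\partial(B_R(0)\cap \mathbb{C}_J)$, I would invoke Proposition \ref{p2}, which gives
\[
\mathcal{Q}_{c,s}(T)^{-1} = \sum_{n=1}^{\infty} \sum_{k=1}^{n} T^{n-k}\overline{T}^{k-1} s^{-1-n},
\]
with the series converging absolutely and uniformly in the operator norm for $|s|=R > \|T\|$. This uniform convergence on the compact contour lets me exchange the series with the integral, so
\[
\frac{1}{2\pi}\int_{\partial(B_R(0)\cap \mathbb{C}_J)} \mathcal{Q}_{c,s}(T)^{-1}\, ds_J\, s^{m+1}
= \sum_{n=1}^{\infty}\sum_{k=1}^{n} T^{n-k}\overline{T}^{k-1}\, \frac{1}{2\pi}\int_{\partial(B_R(0)\cap \mathbb{C}_J)} s^{m-n}\, ds_J.
\]

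The integrals $\tfrac{1}{2\pi}\int_{\partial(B_R(0)\cap\mathbb C_J)} s^{m-n}\, ds_J$ vanish unless $m-n = -1$, in which case they equal $1$ (this is exactly the orthogonality relation used at the end of the previous series-representation proposition in the paper). Hence only the summand $n = m+1$ survives, leaving
\[
\sum_{k=1}^{m+1} T^{m+1-k}\overline{T}^{k-1} = \sum_{j=0}^{m} T^{m-j}\overline{T}^{j} = H_m(T),
\]
after the reindexing $j = k-1$, which is the desired identity.

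The only delicate point is the deformation of the contour to $\partial(B_R(0)\cap\mathbb C_J)$: one must be sure that the region between $\partial(U\cap\mathbb C_J)$ and $\partial(B_R(0)\cap\mathbb C_J)$ avoids $\sigma_S(T)$, so that $\mathcal{Q}_{c,s}(T)^{-1}$ is right slice hyperholomorphic there and Theorem \ref{CIT} applies; this is handled exactly as in the proof that the $Q$-functional calculus is independent of $U$. Everything else is a routine exchange of sum and integral together with the monomial orthogonality relation, so no new technical obstacle is expected.
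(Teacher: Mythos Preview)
Your proof is correct and follows essentially the same approach as the paper: reduce to a ball $B_R(0)$ with $R>\|T\|$ by a contour-deformation argument based on Theorem \ref{CIT}, insert the series expansion of $\mathcal{Q}_{c,s}(T)^{-1}$ from Proposition \ref{p2}, exchange sum and integral by uniform convergence, and use the orthogonality relation for $\int_{\partial(B_R(0)\cap\mathbb C_J)} s^{m-n}\,ds_J$ to isolate the term $n=m+1$. The only cosmetic difference is the order of the steps: the paper first computes the integral over the ball and then deforms from an arbitrary $U$ to $B_r(0)$, whereas you first invoke independence of the contour and then compute over the ball.
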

\begin{proof}
	We start by considering $U$ to be the ball $B_r(0)$ with $\| T \| <r$. We know that
	$$
	\mathcal{Q}_{c,s}(T)^{-1}= \sum_{n=1}^{+ \infty} \sum_{k=1}^n T^{n-k}\overline T^{k-1} s^{-1-n}
	$$
	for every $s \in \partial B_r(0)$. By Proposition \ref{series} we know that the series converges on $ \partial B_{r}(0)$. Thus we have
	\[
	\begin{split}
		\frac{1}{2 \pi} \int_{ \partial(B_r(0) \cap \mathbb{C}_J)} \mathcal{Q}_{c,s}(T)^{-1} ds_J s^{m+1}&=\frac{1}{2 \pi} \sum_{n=1}^{+ \infty} \sum_{k=1}^n T^{n-k}\overline T^{k-1}\int_{ \partial(B_r(0) \cap \mathbb{C}_J)} s^{-n+m} ds_J\\
		&=\sum_{k=1}^{m+1} T^{m+1-k}\overline T^{k-1}=\sum_{k=0}^{m} T^{m-k} \overline T^{k}=H_m(T)
	\end{split}
	\]
	since
	$$ \int_{\partial(B_{r}(0) \cap \mathbb{C}_J)} s^{-n+m}ds_J= \begin{cases}
		0 \quad \hbox{if} \, \, n \neq m+1\\
		2 \pi \quad \hbox{if} \, \, n=m+1.
	\end{cases}
	$$
	This proves the result for the case $U=B_r(0)$. Now we get the result for an arbitrary bounded Cauchy domain $U$ that contains $\sigma_S(T)$. Then there exists a radius $r$ such that $ \overline{U} \subset B_{r}(0)$. The operator $ \mathcal{Q}_{c,s}(T)^{-1}$ is right slice hyperholomorphic and the monomial $s^{m+1}$ is left slice hyperholomorphic on the bounded slice Cauchy domain $B_r(0) \setminus U$. By the Cauchy's integral formula we get
	
	\begin{eqnarray*}
		&&  \frac{1}{2 \pi} \int_{ \partial(B_r(0) \cap \mathbb{C}_J)} \mathcal{Q}_{c,s}(T)^{-1} ds_J s^{m+1}- \frac{1}{2 \pi} \int_{ \partial(U \cap \mathbb{C}_J)} \mathcal{Q}_{c,s}(T)^{-1} ds_J s^{m+1}\\
		&&= \frac{1}{2 \pi} \int_{ \partial\left((B_r(0)\setminus U) \cap \mathbb{C}_J\right)} \mathcal{Q}_{c,s}(T)^{-1} ds_J s^{m+1}=0.
	\end{eqnarray*}
	Finally we have
	$$ \frac{1}{2\pi} \int_{ \partial( U\cap \mathbb{C}_J)} \mathcal{Q}_{c,s}(T)^{-1} ds_J s^{m+1}=\frac{1}{2 \pi} \int_{ \partial(B_r(0) \cap \mathbb{C}_J)} \mathcal{Q}_{c,s}(T)^{-1} ds_J s^{m+1}= H_m(T),$$
	and this concludes the proof.
\end{proof}

\begin{remark}
	Unlike what happens in the $S$-functional calculus (see \cite[Thm. 3.2.2]{CGKBOOK}) we do not have a left slice hyperholomorphic polynomial on the left hand side of equality \eqref{bege}, but we have harmonic polynomials. Another difference with respect to \cite[Thm. 3.2.2]{CGKBOOK} is that in Theorem \ref{poli1} we do not have a difference between right and left part, because by Proposition \ref{series}
	$$
	\sum_{m=1}^\infty  \sum_{k=1}^m T^{m-k} \overline T^{k-1} s^{-1-m}=\sum_{m=1}^\infty  \sum_{k=1}^m s^{-1-m} T^{m-k} \overline T^{k-1}=\mathcal Q_{c,s} (T)^{-1}.
	$$
\end{remark}
For the intrinsic functions we have the following result.
\begin{lemma}
	\label{inte4}
	Let $T \in \mathcal{BC}(X)$. If $ f \in N(\sigma_S(T))$ and $U$ is a bounded slice Cauchy domain such that $ \sigma_S(T) \subset U$ and $ \overline{U} \subset dom(f)$, then we have
	$$
	\tilde{f}(T)= - \frac{1}{\pi} \int_{\partial(U \cap \mathbb{C}_J)} \mathcal{Q}_{c,s}(T)^{-1} ds_J f(s)=- \frac{1}{\pi} \int_{\partial(U \cap \mathbb{C}_J)} f(s) ds_J \mathcal{Q}_{c,s}(T)^{-1}.$$
\end{lemma}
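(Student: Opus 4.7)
The first equality in the statement coincides with Definition \ref{qfun} applied to $f$ viewed as an element of $SH_L(\sigma_S(T))$, in which case $\tilde f=\mathcal{D}f$ by convention; the substantive content is therefore the second equality, which asserts the compatibility of the left and right harmonic functional calculi on intrinsic functions. A preliminary observation is that for intrinsic $f$ one has $\mathcal{D}f=f\mathcal{D}$ as axially harmonic functions, so the interpretation of $\tilde f$ is unambiguous; this can be verified by writing $f(q)=\alpha(u,v)+\underline{\omega}\beta(u,v)$ with $\alpha,\beta$ real and comparing $\partial_{\underline{q}}f$ (computed in \eqref{cong}) with $\sum_{i=1}^{3}(\partial_{q_i}f)e_i$ computed analogously: both equal $\underline{\omega}\partial_r\alpha-\partial_r\beta-(2/r)\beta$, the crucial cross terms matching because $\beta$ is real (so $e_i\beta e_i=-\beta$).

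Denote the two integrals in the statement by $I_1$ (left) and $I_2$ (right). To show $I_1=I_2$ the plan is to exploit two commutation properties along the contour $\partial(U\cap\mathbb{C}_J)$. First, the defining property of intrinsic functions gives $f(s)\in\mathbb{C}_J$ for every $s\in\mathbb{C}_J$, so $f(s)$ commutes with the scalar $ds_J\in\mathbb{C}_J$. Second, the hypothesis $T\in\mathcal{BC}(X)$ allows the expansion
$$
\mathcal{Q}_{c,s}(T)=s^2\mathcal{I}-2sT_0+\sum_{i=0}^{3}T_i^2,
$$
in which the real operators $T_0$ and $\sum_i T_i^2$ act on $X_{\mathbb{R}}$ and hence commute with every quaternion scalar; since $s\in\mathbb{C}_J$ commutes with every element of $\mathbb{C}_J$, the same is true of $\mathcal{Q}_{c,s}(T)$, and consequently of $\mathcal{Q}_{c,s}(T)^{-1}$. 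In particular $\mathcal{Q}_{c,s}(T)^{-1}$ commutes with both $f(s)$ and $ds_J$.

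Putting these commutations together, pointwise on the contour
$$
f(s)\,ds_J\,\mathcal{Q}_{c,s}(T)^{-1}=ds_J\,f(s)\,\mathcal{Q}_{c,s}(T)^{-1}=ds_J\,\mathcal{Q}_{c,s}(T)^{-1}\,f(s)=\mathcal{Q}_{c,s}(T)^{-1}\,ds_J\,f(s),
$$
and integrating yields $I_2=I_1$, which is by definition $\tilde f(T)$. The main delicate point is the second commutation property above: it rests essentially on the commuting-components hypothesis, which is what allows the real components $T_i$ to be treated as operators on $X_{\mathbb{R}}$ commuting with all quaternion multiplication. Without this hypothesis the identity $\mathcal{Q}_{c,s}(T)^{-1}f(s)=f(s)\mathcal{Q}_{c,s}(T)^{-1}$ would generally fail and the lemma would break; everything else in the argument is formal bookkeeping.
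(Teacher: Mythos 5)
Your proof is correct, and it takes a genuinely different route from the paper's. The paper's own proof is a one-liner that appeals to Runge's theorem: the idea there is to approximate the intrinsic function $f$ uniformly on $\overline{U}$ by intrinsic rational functions (which have real Taylor/Laurent coefficients), verify the identity for those directly via the residue computation underlying Theorem~\ref{poli1} (where the right-hand side $H_m(T)$ is a \emph{real} operator, so left and right placements of scalars agree), and pass to the limit. You instead give a direct algebraic argument: the intrinsic hypothesis guarantees $f(s)\in\mathbb{C}_J$ for $s\in\mathbb{C}_J$, and the commuting-components hypothesis guarantees that $T_0$ and $\sum_i T_i^2$ are real operators; hence $\mathcal{Q}_{c,s}(T)=s^2\mathcal{I}-2sT_0+\sum_iT_i^2$ commutes with left multiplication by any $q\in\mathbb{C}_J$, and so does its inverse, and the integrands for the left and right $Q$-calculi coincide pointwise on the contour. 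Your approach is more elementary (no approximation machinery, no convergence of operator-valued integrals to check) and makes explicit exactly where each hypothesis enters, which the paper leaves implicit. Your preliminary observation that $\mathcal{D}f=f\mathcal{D}$ for intrinsic $f$ (so that $\tilde f$ is unambiguous) is also a nice touch that the paper does not bother to state.

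One caveat worth recording: the pointwise chain $f(s)\,ds_J\,\mathcal{Q}_{c,s}(T)^{-1}=\cdots=\mathcal{Q}_{c,s}(T)^{-1}\,ds_J\,f(s)$ must be read inside the real Banach algebra of operators on $X$ in which quaternion scalars act by left multiplication $L_q$; with that convention the statement ``$\mathcal{Q}_{c,s}(T)^{-1}$ commutes with every $q\in\mathbb{C}_J$'' means precisely $L_q\circ\mathcal{Q}_{c,s}(T)^{-1}=\mathcal{Q}_{c,s}(T)^{-1}\circ L_q$, which is what your expansion of $\mathcal{Q}_{c,s}(T)$ establishes. If one instead interpreted the scalars on the right of the resolvent as acting by right multiplication on the image, the two pointwise integrands would genuinely differ, and one would be forced back to the integrated (Runge or residue) argument, where the realness of $\int s^{-1-m}\,ds_J\,f(s)$ for intrinsic $f$ is what saves the day. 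So your proof is correct as written, but it is worth flagging that the single sentence ``commutes with every quaternion scalar'' is doing real work and depends on the standard convention in which the functional-calculus integrals are defined.
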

\begin{proof}
	It follows by the definitions of intrinsic functions, of the $Q$-functional calculus and Runge's theorem.
\end{proof}
For the $\mathcal{Q}$- functional calculus it is possible to prove a generalized product rule.
\begin{theorem}
	Let $T \in \mathcal{BC}(X)$ and assume $f \in N(\sigma_{S}(T))$ and $g \in SH_L(\sigma_{S}(T))$ then
	\begin{eqnarray}
		\label{product1}
		2[\mathcal{D} \left((.) fg\right)(T)- \overline{T}\mathcal{D}(fg)(T)]&=&f(T) \mathcal{D} \left((.)g\right)(T)-f(T) \overline{T} \mathcal{D}(g)(T)+\\
		\nonumber
		&&+ \mathcal{D} \left(f(.)\right)(T) g(T)- \mathcal{D}(f)(T) \overline{T} g(T).
	\end{eqnarray}
\end{theorem}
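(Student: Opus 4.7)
My plan is to show that both sides of the claimed identity collapse to the same quantity, namely $-4\, f(T)\, g(T)$, and thereby establish equality. The key observation is that each pair of terms fits into the pattern of two ``commutator-type'' identities that convert the harmonic functional calculus back into the ordinary $S$-functional calculus.

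Concretely, I will first prove the two auxiliary formulas
$$
\mathcal{D}((.)h)(T) - \overline{T}\,\mathcal{D}(h)(T) = -2\, h(T), \qquad h \in SH_L(\sigma_S(T)),
$$
$$
\mathcal{D}(h(.))(T) - \mathcal{D}(h)(T)\,\overline{T} = -2\, h(T), \qquad h \in N(\sigma_S(T)).
$$
For the first, I will expand each term via Definition \ref{qfun}, bring $\overline{T}$ inside the integral by linearity, and use the commutativity of the components of $T$ to interchange $\overline{T}$ with $\mathcal{Q}_{c,s}(T)^{-1}$. Since $s$ and $ds_J$ both lie in $\mathbb{C}_J$ along the contour, I can move $s$ past $ds_J$ and collect the two integrands as $\mathcal{Q}_{c,s}(T)^{-1}(s - \overline{T})\, ds_J\, h(s) = S_L^{-1}(s,T)\, ds_J\, h(s)$. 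Recognising the left $S$-functional calculus formula \eqref{Scalleft} with the prefactor $-\tfrac{1}{\pi} = -2 \cdot \tfrac{1}{2\pi}$ yields $-2\, h(T)$. For the second identity, I will use Lemma \ref{inte4} to represent $\mathcal{D}(h)(T)$ via the right integral (valid because $h$ is intrinsic, and moreover $h(.)$ is then intrinsic too), perform the analogous manipulation to obtain the integrand $h(s)\, ds_J\, S_R^{-1}(s,T)$, and invoke the right $S$-functional calculus formula \eqref{Scalright}.

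Granted these two identities, the theorem follows quickly. Since $f$ is intrinsic, the product $fg$ is left slice hyperholomorphic on a neighbourhood of $\sigma_S(T)$, so applying the first identity with $h = fg$ shows that the left-hand side of \eqref{product1} equals $-4\,(fg)(T)$. Applying the first identity with $h = g$ and the second identity with $h = f$, the right-hand side decomposes as
$$
f(T)\bigl(-2\, g(T)\bigr) + \bigl(-2\, f(T)\bigr) g(T) = -4\, f(T)\, g(T).
$$
The equality of the two sides is then the standard product rule $(fg)(T) = f(T)\, g(T)$ of the $S$-functional calculus for an intrinsic left factor, available in \cite[Chap. 3]{CGKBOOK}.

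The main technical point, rather than a genuine obstacle, is the careful bookkeeping of non-commutativity inside the contour integrals. Everything works because the hypothesis $f \in N(\sigma_S(T))$ ensures that $f(s)$ commutes with $s$ and $ds_J$ along the integration contour in $\mathbb{C}_J$, while $T \in \mathcal{BC}(X)$ guarantees that $\overline{T}$ commutes with $\mathcal{Q}_{c,s}(T)^{-1}$ and that the left and right commutative $S$-resolvent operators coincide. Both hypotheses are in force, so the reduction to the $S$-functional calculus proceeds cleanly.
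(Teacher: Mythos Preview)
Your strategy is correct and gives a considerably shorter proof than the paper's. The paper expands the right-hand side as a double contour integral in $s$ and $p$, recognises the integrand as $-2\,S_R^{-1}(s,T)S_L^{-1}(p,T)$, invokes the $S$-resolvent equation \eqref{ress}, kills two of the four resulting terms by Cauchy's theorem, collapses the remaining two via Lemma \ref{app}, and only then rewrites the answer as the left-hand side. You instead observe that each pair of terms on either side is of the form $\mathcal{D}\bigl((.)h\bigr)(T)-\overline T\,\mathcal{D}(h)(T)$ or $\mathcal{D}\bigl(h(.)\bigr)(T)-\mathcal{D}(h)(T)\,\overline T$, reduce these directly to $-2\,h(T)$, and then invoke the already-established product rule $(fg)(T)=f(T)g(T)$ of the $S$-functional calculus. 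Your route is more conceptual: it exhibits the theorem as a repackaging of the $S$-calculus product rule rather than as an independent consequence of the resolvent equation.

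One slip to fix: you claim that $T\in\mathcal{BC}(X)$ forces $\overline T$ to commute with $\mathcal{Q}_{c,s}(T)^{-1}$ and that $S_L^{-1}(s,T)=S_R^{-1}(s,T)$. Neither is true in general, since $\overline T$ carries the imaginary units $e_i$, which do not commute with the quaternionic scalar $s$ appearing in $\mathcal{Q}_{c,s}(T)$. Fortunately neither claim is needed. In your first auxiliary identity, after moving $s$ past $ds_J$ and past $\mathcal{Q}_{c,s}(T)^{-1}$ (both legitimate, the latter because $\mathcal{Q}_{c,s}(T)$ is a polynomial in $s$ with real-operator coefficients), the integrand already reads $(s\mathcal I-\overline T)\,\mathcal{Q}_{c,s}(T)^{-1}\,ds_J\,h(s)=S_L^{-1}(s,T)\,ds_J\,h(s)$ by the very definition of the commutative left $S$-resolvent; no commutation of $\overline T$ with $\mathcal{Q}_{c,s}(T)^{-1}$ is required. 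Symmetrically, in the second identity the integrand becomes $h(s)\,ds_J\,\mathcal{Q}_{c,s}(T)^{-1}(s\mathcal I-\overline T)=h(s)\,ds_J\,S_R^{-1}(s,T)$. With this correction your argument goes through verbatim.
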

\begin{proof}
	Let $G_1$ and $G_2$ be two bounded slice Cauchy domains such that contain $\sigma_{S}(T)$ and $ \overline{G}_1 \subset G_2$ and $\overline{G}_2 \subset dom(f) \cap dom(g)$. We choose $p \in \partial(G_1 \cap \mathbb{C}_J)$ and $s \in \partial(G_2 \cap \mathbb{C}_J)$. By Definition \ref{Sfun} and Definition \ref{qfun} for $J \in \mathbb{S}$ we have
	
	\begin{eqnarray*}
		&&f(T) \mathcal{D} \left((.)g\right)(T)-f(T) \overline{T} \mathcal{D}(g)(T)+ \mathcal{D} \left(f(.)\right)(T) g(T)- \mathcal{D}(f)(T) \overline{T} g(T)\\
		&&=- \frac{1}{2 \pi} \int_{\partial(G_2 \cap \mathbb{C}_J)} f(s) ds_J S_{R}^{-1}(s,T) \frac{1}{\pi} \int_{\partial(G_1 \cap \mathbb{C}_J)} \mathcal{Q}_{c,p}(T)^{-1}pdp_J g(p)\\
		&& \, \, \, \,\, \, \, \,+\frac{1}{2 \pi} \int_{\partial(G_2 \cap \mathbb{C}_J)} f(s) ds_J S_{R}^{-1}(s,T) \frac{\overline{T}}{\pi} \int_{\partial(G_1 \cap \mathbb{C}_J)} \mathcal{Q}_{c,p}(T)^{-1}dp_J g(p)\\
		&& \, \, \, \,\, \, \, \,- \frac{1}{\pi} \int_{\partial(G_2 \cap \mathbb{C}_J)} \mathcal{Q}_{c,s}(T)^{-1} s ds_J f(s) \frac{1}{2 \pi} \int_{\partial(G_1 \cap \mathbb{C}_J)}S^{-1}_L(p,T) dp_J g(p)\\
		&& \, \, \, \,\, \, \, \,+ \frac{1}{\pi} \int_{\partial(G_2 \cap \mathbb{C}_J)} \mathcal{Q}_{c,s}(T)^{-1}ds_J f(s) \frac{\overline{T}}{2 \pi} \int_{\partial(G_1 \cap \mathbb{C}_J)}S^{-1}_L(p,T) dp_J g(p).
	\end{eqnarray*}
	Since the function $f$ is intrinsic by Lemma \ref{inte4} we get
	\begin{eqnarray*}
		&&f(T)\mathcal{ D} \left((.)g\right)(T)-f(T) \overline{T} \mathcal{D}(g)(T)+ \mathcal{D} \left(f(.)\right)(T) g(T)- \mathcal{D}(f)(T) \overline{T} g(T)\\
		&&=- \frac{1}{2 \pi^2} \int_{\partial(G_2 \cap \mathbb{C}_J)} f(s) ds_J S_{R}^{-1}(s,T)  \int_{\partial(G_1 \cap \mathbb{C}_J)} \mathcal{Q}_{c,p}(T)^{-1}pdp_J g(p)\\
		&& \, \, \, \,\, \, \, \,+\frac{1}{2 \pi^2} \int_{\partial(G_2 \cap \mathbb{C}_J)} f(s) ds_J S_{R}^{-1}(s,T) \overline{T} \int_{\partial(G_1 \cap \mathbb{C}_J)} \mathcal{Q}_{c,p}(T)^{-1}dp_J g(p)\\
		&& \, \, \, \,\, \, \, \,- \frac{1}{2\pi^2} \int_{\partial(G_2 \cap \mathbb{C}_J)} f(s) ds_Js \mathcal{Q}_{c,s}(T)^{-1}  \int_{\partial(G_1 \cap \mathbb{C}_J)}S^{-1}_L(p,T) dp_J g(p)\\
		&& \, \, \, \,\, \, \, \,+ \frac{1}{2\pi^2} \int_{\partial(G_2 \cap \mathbb{C}_J)} f(s) ds_J \mathcal{Q}_{c,s}(T)^{-1} \overline{T} \int_{\partial(G_1 \cap \mathbb{C}_J)}S^{-1}_L(p,T) dp_J g(p)\\
		&&= \frac{1}{2 \pi^2} \int_{\partial(G_2 \cap \mathbb{C}_J)} f(s)ds_J  \int_{\partial (G_1 \cap \mathbb{C}_J)} \biggl[-S_{R}^{-1}(s,T) \mathcal{Q}_{c,p}(T)^{-1}p+S_{R}^{-1}(s,T) \overline{T} \mathcal{Q}_{c,p}(T)^{-1}\\
		&& \, \, \, \,\, \, \, \, -s \mathcal{Q}_{c,s}(T)^{-1} S_{L}^{-1}(p,T)+ \mathcal{Q}_{c,s}(T)^{-1} \overline{T} S_{L}^{-1}(p,T)\biggl] dp_J g(p)\\
		&&= \frac{1}{2 \pi^2} \int_{\partial(G_2 \cap \mathbb{C}_J)} f(s)ds_J  \int_{\partial (G_1 \cap \mathbb{C}_J)} \biggl[-S_{R}^{-1}(s,T) (p \mathcal{I}- \overline{T})\mathcal{Q}_{c,p}(T)^{-1}\\
		&& \, \, \, \,\, \, \, \, + \mathcal{Q}_{c,s}(T)^{-1}(\overline{T}-s \mathcal{I}) S_{L}^{-1}(p,T)\biggl] dp_J g(p)\\
		&&= \frac{1}{2 \pi^2} \int_{\partial(G_2 \cap \mathbb{C}_J)} f(s)ds_J  \int_{\partial (G_1 \cap \mathbb{C}_J)} \biggl[-S_{R}^{-1}(s,T) S^{-1}_{L}(p,T)-S_{R}^{-1}(s,T) S^{-1}_{L}(p,T) \biggl] dp_J g(p)\\
		&&= -\frac{1}{ \pi^2} \int_{\partial(G_2 \cap \mathbb{C}_J)} f(s)ds_J  \int_{\partial (G_1 \cap \mathbb{C}_J)} \biggl[S_{R}^{-1}(s,T) S^{-1}_{L}(p,T)\biggl] dp_J g(p).\\
	\end{eqnarray*}
	By the $S$-resolvent equation (see \eqref{ress}) and by setting $\mathcal{Q}_{s}(p):= p^2-2s_0p+|s|^2$, we get
	\begin{eqnarray*}
		&&f(T) \mathcal{D} \left((.)g\right)(T)-f(T) \overline{T} \mathcal{D}(g)(T)+ \mathcal{D} \left(f(.)\right)(T) g(T)- \mathcal{D}(f)(T) \overline{T} g(T)\\
		&& =  -\frac{1}{ \pi^2} \int_{\partial(G_2 \cap \mathbb{C}_J)} f(s)ds_J  \int_{\partial (G_1 \cap \mathbb{C}_J)} \bigl[(S^{-1}_R(s,T)-S_{L}^{-1}(p,T))p- \bar{s}(S^{-1}_R(s,T)-S_{L}^{-1}(p,T))\bigl] \cdot\\
		&& \, \, \, \,\, \, \, \, \cdot \mathcal{Q}_{s}(p)^{-1} dp_J g(p)\\
		&&=-\frac{1}{ \pi^2}\biggl[ \int_{\partial(G_2 \cap \mathbb{C}_J)} f(s)ds_J  \int_{\partial (G_1 \cap \mathbb{C}_J)} S^{-1}_R(s,T)p\mathcal{Q}_{s}(p)^{-1} dp_J g(p)+\\
		&& \, \, \, \,\, \, \, \,-\int_{\partial(G_2 \cap \mathbb{C}_J)} f(s)ds_J  \int_{\partial (G_1 \cap \mathbb{C}_J)} S^{-1}_L(p,T)p\mathcal{Q}_{s}(p)^{-1} dp_J g(p)\\
		&& \, \, \, \,\, \, \, \,-\int_{\partial(G_2 \cap \mathbb{C}_J)} f(s)ds_J  \int_{\partial (G_1 \cap \mathbb{C}_J)} \bar{s} S^{-1}_R(s,T)\mathcal{Q}_{s}(p)^{-1} dp_J g(p)\\
		&& \, \, \, \,\, \, \, \,+\int_{\partial(G_2 \cap \mathbb{C}_J)} f(s)ds_J  \int_{\partial (G_1 \cap \mathbb{C}_J)} \bar{s} S^{-1}_L(p,T)\mathcal{Q}_{s}(p)^{-1} dp_J g(p) \biggl].
	\end{eqnarray*}
	From the Cauchy formula we obtain
	$$ \int_{\partial(G_2 \cap \mathbb{C}_J)} f(s)ds_J  \int_{\partial (G_1 \cap \mathbb{C}_J)} S^{-1}_R(s,T)p\mathcal{Q}_{s}(p)^{-1} dp_J g(p)=0,$$
	$$ \int_{\partial(G_2 \cap \mathbb{C}_J)} f(s)ds_J  \int_{\partial (G_1 \cap \mathbb{C}_J)} \bar{s}S^{-1}_R(s,T)\mathcal{Q}_{s}(p)^{-1} dp_J g(p)=0.$$
	Therefore
	\begin{eqnarray*}
		&&f(T) \mathcal{D} \left((.)g\right)(T)-f(T) \overline{T} \mathcal{D}(g)(T)+ \mathcal{D} \left(f(.)\right)(T) g(T)- \mathcal{D}(f)(T) \overline{T} g(T)\\
		&&= -\frac{1}{ \pi^2}\biggl[ -\int_{\partial(G_2 \cap \mathbb{C}_J)} f(s)ds_J  \int_{\partial (G_1 \cap \mathbb{C}_J)} S^{-1}_L(p,T)p\mathcal{Q}_{s}(p)^{-1} dp_J g(p)\\
		&& \, \, \, \,\, \, \, \,+\int_{\partial(G_2 \cap \mathbb{C}_J)} f(s)ds_J  \int_{\partial (G_1 \cap \mathbb{C}_J)} \bar{s} S^{-1}_L(p,T)\mathcal{Q}_{s}(p)^{-1} dp_J g(p)\biggl]\\
		&& \, \, \, \, \, \, \, \,=-\frac{1}{ \pi^2} \int_{\partial(G_2 \cap \mathbb{C}_J)} f(s) ds_J \int_{\partial (G_1 \cap \mathbb{C}_J)} \left[\bar{s}S_{L}^{-1}(p,T)-S_{L}^{-1}(p,T)p\right] \mathcal{Q}_{s}(p)^{-1} dp_J g(p).
	\end{eqnarray*}
	Using Lemma \ref{app} with $B:= S^{-1}_L(p,T)$ we get
	\begin{eqnarray*}
		&&f(T) \mathcal{D} \left((.)g\right)(T)-f(T) \overline{T} \mathcal{D}(g)(T)+ \mathcal{D} \left(f(.)\right)(T) g(T)- \mathcal{D}(f)(T) \overline{T} g(T)\\
		&& = - \frac{2}{\pi} \int_{\partial(G_1 \cap \mathbb{C}_J)} S^{-1}_L(p,T) dp_J f(p)g(p).
	\end{eqnarray*}
	Finally by definition of the $S$-resolvent operator we obtain
	\begin{eqnarray*}
		&&f(T) \mathcal{D} \left((.)g\right)(T)-f(T) \overline{T} \mathcal{D}(g)(T)+ \mathcal{D} \left(f(.)\right)(T) g(T)- \mathcal{D}(f)(T) \overline{T} g(T)\\
		&& = - \frac{2}{\pi} \int_{\partial(G_1 \cap \mathbb{C}_J)} (p \mathcal{I}- \overline{T}) \mathcal{Q}_{c,p}(T)^{-1} dp_J f(p)g(p)\\
		&&=- \frac{2}{ \pi} \left(\int_{\partial(G_1 \cap \mathbb{C}_J)}  \mathcal{Q}_{c,p}(T)^{-1} dp_J p f(p)g(p)-\overline{T}\int_{\partial(G_1 \cap \mathbb{C}_J)}   \mathcal{Q}_{c,p}(T)^{-1} dp_J f(p)g(p)\right)\\
		&&= -2 \left[ \overline{T} \mathcal{D}(fg)(T)-\mathcal{D}\left( (.) fg \right)(T)\right].
	\end{eqnarray*}
\end{proof}

\section{The $Q$-functional calculus and new properties of the $F$-functional calculus}

The introduction of the $Q$-functional calculus is essential to prove a product formula for the $F$- functional calculus.
Before to go through this, we prove the following property of the $F$-functional calculus.

\begin{theorem}
	\label{poli2}
	Let us consider $T \in \mathcal{BC}(X)$ and $m \in \mathbb{N}_0$. Let $U \subset \mathbb{H}$ be a bounded slice Cauchy domain with $ \sigma_{S}(T)\subset U$. For every imaginary unit $J \in \mathbb{S}$, we have
	\begin{equation}
		\label{malo1}
		\mathcal{Q}_{m}(T, \overline{T})= - \frac{1}{4 \pi (m+1)(m+2)} \int_{\partial (U \cap \mathbb{C}_J)} F_L(s,T) ds_J s^{m+2}
	\end{equation}
	and
	\begin{equation}
		\label{malo2}
		\mathcal{Q}_{m}(T, \overline{T})= - \frac{1}{4 \pi (m+1)(m+2)} \int_{\partial (U \cap \mathbb{C}_J)} s^{m+2} ds_J F_R(s,T),
	\end{equation}
	where
	$$ \mathcal{Q}_{m}(T, \overline{T})= \sum_{j=0}^m \frac{2(m-j+1)}{(m+1)(m+2)}T^{m-j} \overline{T}^j.
	$$
\end{theorem}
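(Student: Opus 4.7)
The plan is to reduce the integral in \eqref{malo1} to the integral identity of Theorem \ref{poli1} by means of the left $F$-resolvent equation of Theorem \ref{fre}. Introduce
\[
I_k \;:=\; \int_{\partial(U\cap\mathbb{C}_J)} F_L(s,T)\,ds_J\,s^k, \qquad k\ge 0.
\]
Because $F_L(s,T)=-4(s\mathcal I-\overline T)\mathcal Q_{c,s}(T)^{-2}$ is right slice hyperholomorphic in $s$ on $\rho_S(T)$ and $s^k$ is intrinsic slice hyperholomorphic, the integral $I_k$ depends neither on the slice Cauchy domain $U$ (provided $\sigma_S(T)\subset U$) nor on $J\in\mathbb S$, by a deformation argument analogous to the one used for the $F$-functional calculus itself. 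Since $\partial(U\cap\mathbb{C}_J)\subset\mathbb C_J$, the variable $s$ commutes with $ds_J$, hence
\[
I_{k+1}=\int F_L(s,T)\,s\,ds_J\,s^k.
\]
Substituting $F_L(s,T)\,s=T\,F_L(s,T)-4\,\mathcal Q_{c,s}(T)^{-1}$ from Theorem \ref{fre} and applying Theorem \ref{poli1} to the resulting pseudo-resolvent integral yields the key recursion
\[
I_{k+1}=T\,I_k-8\pi\,H_{k-1}(T), \qquad k\ge 0,
\]
with the convention $H_{-1}(T):=0$ (corresponding to the vanishing of $\int\mathcal Q_{c,s}(T)^{-1}\,ds_J$, which follows from Proposition \ref{p2}).

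To start the recursion I need the base cases $I_0=I_1=0$. Choosing $U=B_r(0)$ with $r>\|T\|$ (legitimate by the independence of $U$ just mentioned), the explicit form of $F_L(s,T)$ together with the series expansion of $\mathcal Q_{c,s}(T)^{-2}$ shows that $F_L(s,T)$ decays like $|s|^{-3}$ as $|s|\to\infty$. Consequently neither $F_L(s,T)$ nor $F_L(s,T)\,s$ possesses an $s^{-1}$-term on the contour $\partial(B_r(0)\cap\mathbb C_J)$, so both integrals vanish.

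The induction is then purely combinatorial. The claim to establish is
\[
I_{m+2}=-8\pi\sum_{j=0}^m(m-j+1)\,T^{m-j}\overline T^{\,j},
\]
which upon multiplication by $-\tfrac{1}{4\pi(m+1)(m+2)}$ returns exactly $\mathcal Q_m(T,\overline T)$. Assuming the formula for $I_{m+1}$ (with index $m$ replaced by $m-1$), the recursion gives
\[
I_{m+2}=T\,I_{m+1}-8\pi H_m(T)=-8\pi\Bigl(\sum_{j=0}^{m-1}(m-j)\,T^{m-j}\overline T^{\,j}+\sum_{j=0}^m T^{m-j}\overline T^{\,j}\Bigr),
\]
and regrouping via $(m-j)+1=m-j+1$ for $0\le j\le m-1$ together with $1=m-m+1$ for $j=m$ completes the step.

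The right-hand identity \eqref{malo2} follows by an entirely parallel argument using $J_k:=\int s^k\,ds_J\,F_R(s,T)$, the right $F$-resolvent equation $sF_R(s,T)-F_R(s,T)\,T=-4\mathcal Q_{c,s}(T)^{-1}$ and the right analogue of Theorem \ref{poli1}; the mirrored recursion is $J_{k+1}=J_k\,T-8\pi H_{k-1}(T)$. The only point to watch is that $T$ and $\overline T$ commute (since $T\in\mathcal{BC}(X)$), which makes the order of the factors in each monomial $T^{m-j}\overline T^{\,j}$ immaterial and produces the same combinatorial coefficients on both sides. I do not anticipate a real obstacle: the main bookkeeping issue is tracking the factors of $2\pi$ from Theorem \ref{poli1} against the prefactor $-\tfrac{1}{4\pi(m+1)(m+2)}$ in the statement.
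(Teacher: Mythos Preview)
Your argument is correct, but it proceeds quite differently from the paper. The paper expands $F_L(s,T)$ directly as a series in $s$ on a large circle, namely $F_L(s,T)=\sum_{n\ge 2}-2n(n-1)\,\mathcal Q_{n-2}(T,\overline T)\,s^{-1-n}$ (quoted from \cite{CDS}), integrates term by term against $s^{m+2}$, picks out the single surviving term $n=m+2$, and then deforms from the ball back to an arbitrary $U$ via Cauchy's theorem. You instead set up the one-step recursion $I_{k+1}=T\,I_k-8\pi\,H_{k-1}(T)$ by combining the left $F$-resolvent equation of Theorem~\ref{fre} with the pseudo-resolvent identity of Theorem~\ref{poli1}, and then close the induction combinatorially. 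Your route has the advantage of being self-contained within the paper (it avoids importing the $F$-resolvent series from \cite{CDS}) and it makes transparent the structural link between Theorems~\ref{poli1} and~\ref{poli2} through Theorem~\ref{fre}; the paper's route is shorter once the series is granted and bypasses the induction entirely. For the base cases $I_0=I_1=0$, your decay argument is fine, and you could equally cite the well-posedness of the $F$-functional calculus applied to the constants $1$ and $q$ (cf.\ the vanishing used in \eqref{nova} and \eqref{nova_bis} at the operator level).
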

\begin{proof}
	We start by considering the set $U$ as the ball $B_r(0)$ with $|| T|| <r$. Then, by \cite[Thm. 3.9]{CDS} we know that $F_L(s,T)=  \sum_{n=2}^\infty \mathcal -2(n-1)n \mathcal{Q}_{n-2}(T, \overline{T}) s^{-1-n}$ for every $s \in \partial B_{r}(0)$. This series converges uniformly on $\partial B_{r}(0)$. Thus we have
	\[
	\begin{split}
		&-\frac{1}{4 \pi (m+1)(m+2)} \int_{\partial (B_{r}(0) \cap \mathbb{C}_J)} F_L(s,T) ds_J s^{m+2}
		\\
		&= \frac{1}{2\pi (m+1)(m+2)} \sum_{n=0}^{+ \infty} (n+1)(n+2) \mathcal{Q}_{n}(T, \overline{T})  \int_{\partial (B_{r}(0) \cap \mathbb{C}_J)} s^{-1-n+m} ds_J.
	\end{split}
	\]
	Due to the fact that
	$$ \int_{\partial(B_{r}(0) \cap \mathbb{C}_J)} s^{-1-n+m}ds_J= \begin{cases}
		0 \quad  \, \, \hbox{if} \, \, \, \, n \neq m,\\
		2 \pi \quad \hbox{if} \, \, n=m,
	\end{cases}
	$$
	we obtain
	$$ -\frac{1}{4 \pi (m+1)(m+2)} \int_{\partial (B_{r}(0) \cap \mathbb{C}_J)} F_L(s,T) ds_J s^{m+2}=\mathcal{Q}_{m}(T, \overline{T}).$$
	Now, we consider $U$ an arbitrary bounded slice Cauchy domain that contains $ \sigma_{S}(T)$. We suppose that there exists a radius $r$ such that $ \overline{U} \subset B_{r}(0)$. The left $F$-resolvent operator $F_{L}(s,T)$ is right slice hyperholomorphic in the variable $s$ and the monomial $s^{m+2}$ is left slice hyperholomorphic on the bounded slice Cauchy domain $B_{r}(0) \setminus U$. By the Cauchy's integral theorem (see Theorem \ref{CIT}) we get
	\[
	\begin{split}
		& -\frac{1}{4 \pi (m+1)(m+2)} \int_{\partial (B_{r}(0) \cap \mathbb{C}_J)} F_L(s,T) ds_J s^{m+2}
		\\
		&
		+\frac{1}{4 \pi (m+1)(m+2)} \int_{\partial (U \cap \mathbb{C}_J)} F_L(s,T) ds_J s^{m+2}
		\\
		&= -\frac{1}{4 \pi (m+1)(m+2)} \int_{\partial \left((B_{r}(0)\setminus U) \cap \mathbb{C}_J\right)} F_L(s,T) ds_J s^{m+2}=0.
	\end{split}
	\]
	This implies that
	\[
	\begin{split}
		&-\frac{1}{4 \pi (m+1)(m+2)} \int_{\partial (U \cap \mathbb{C}_J)} F_L(s,T) ds_J s^{m+2}
		\\
		&=-\frac{1}{4 \pi (m+1)(m+2)} \int_{\partial (B_{r}(0) \cap \mathbb{C}_J)} F_L(s,T) ds_J s^{m+2}
		\\
		& = \mathcal{Q}_{m}(T, \overline{T}).
	\end{split}
	\]
\end{proof}
\begin{remark}
	Theorem \ref{poli2} is meaningful with respect to the definition of the $F$-functional calculus. Indeed, the results of the integrals \eqref{malo1} and \eqref{malo2} are Fueter regular polynomials in T. That kind of polynomials were introduced in \cite{CMF, CMF1}.
\end{remark}
Now, we prove a product rule for the $F$-function calculus.

\begin{theorem}
	\label{product}
	Let $T \in \mathcal{BC}(X)$ and assume $f \in N(\sigma_S(T))$ and $g \in SH_L(\sigma_S(T))$ then we have
	\begin{equation}
		\label{pr1}
		\Delta (fg)(T)=\Delta f(T) g(T)+f(T) \Delta g(T)-  \mathcal{D}f(T) \mathcal{D} g(T).
	\end{equation}
	where $\mathcal{D}$ is the Fueter operator.
\end{theorem}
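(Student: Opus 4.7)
The plan is to prove the identity by expressing both sides as iterated contour integrals over two nested slice Cauchy domains and matching them via the resolvent equations together with Lemma \ref{app}, in the same spirit as the proof of the generalized $\mathcal{D}$-product rule (formula \eqref{product1}) above. Fix $J\in\mathbb{S}$ and choose bounded slice Cauchy domains $G_1,G_2$ with $\sigma_S(T)\subset G_1$, $\overline{G}_1\subset G_2$, and $\overline{G}_2\subset dom(f)\cap dom(g)$; let $s$ range over $\partial(G_2\cap\mathbb{C}_J)$ and $p$ over $\partial(G_1\cap\mathbb{C}_J)$.

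Because $f\in N(\sigma_S(T))$ is intrinsic, Lemma \ref{inte4} together with its obvious analogues for the $S$- and $F$-functional calculi lets us extract $f(s)$ to the left of the integrand in each of the three representations of $\Delta f(T)$, $f(T)$, $\mathcal{D}f(T)$. Assembling the three summands of the right-hand side into a single double integral yields
\[\Delta f(T)g(T)+f(T)\Delta g(T)-\mathcal{D}f(T)\mathcal{D}g(T)=\frac{1}{4\pi^2}\iint f(s)\,ds_J\,K(s,T,p)\,dp_J\,g(p),\]
where
\[K(s,T,p):=F_R(s,T)S_L^{-1}(p,T)+S_R^{-1}(s,T)F_L(p,T)-4\,\qcs\qcp.\]

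The heart of the proof is the simplification of $K(s,T,p)$. The factorizations $F_R(s,T)=-4\,\qcs S_R^{-1}(s,T)$ and $F_L(p,T)=-4\,S_L^{-1}(p,T)\qcp$, immediate from Definition \ref{FK} and the commutativity of the components of $T$, rewrite the first two summands of $K$ as $-4(\qcs+\qcp)S_R^{-1}(s,T)S_L^{-1}(p,T)$. Applying the $S$-resolvent equation \eqref{ress} to $S_R^{-1}(s,T)S_L^{-1}(p,T)$ and exploiting the scalar identity $\bar s(s-p)-(s-p)p=p^2-2s_0 p+|s|^2$, the $-4\,\qcs\qcp$ term is absorbed in the cancellations and $K$ takes the form
\[K(s,T,p)=(\bar s F_L(p,T)-F_L(p,T)p)(p^2-2s_0p+|s|^2)^{-1}+R(s,T,p),\]
where $f(s)R(s,T,p)$ is right slice hyperholomorphic in $s$ on a neighbourhood of $\overline{G}_2$; hence $\int_{\partial(G_2\cap\mathbb{C}_J)}f(s)\,ds_J\,R(s,T,p)=0$ for every fixed $p$ by Theorem \ref{CIT}.

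Invoking Lemma \ref{app} with $B=F_L(p,T)$ then evaluates the inner $s$-integral to $F_L(p,T)f(p)$. Since $f(p)\in\mathbb{C}_J$ commutes with $dp_J$ and with the operator $F_L(p,T)$ (because the components of $T$ commute with left multiplication by every scalar in $\mathbb{C}_{J_p}$), we obtain
\[\Delta f(T)g(T)+f(T)\Delta g(T)-\mathcal{D}f(T)\mathcal{D}g(T)=\frac{1}{2\pi}\int_{\partial(G_1\cap\mathbb{C}_J)}F_L(p,T)\,dp_J\,f(p)g(p)=\Delta(fg)(T),\]
the last equality by the $F$-functional calculus applied to $fg\in SH_L(\sigma_S(T))$. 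The main obstacle is the algebraic simplification of $K(s,T,p)$: juggling the $F$-resolvent equations of Theorem \ref{fre} and the $S$-resolvent equation \eqref{ress} to isolate the Lemma \ref{app}-friendly summand and to identify the vanishing slice hyperholomorphic remainder requires careful bookkeeping of operator/scalar orderings, since in the quaternionic setting left multiplication by a scalar does not commute with operator-valued resolvents unless the scalar lies in the same complex plane.
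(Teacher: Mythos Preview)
Your overall architecture is exactly the paper's: write the right-hand side as a double integral with kernel
\[
K(s,T,p)=F_R(s,T)S_L^{-1}(p,T)+S_R^{-1}(s,T)F_L(p,T)-4\,\qcs\qcp,
\]
simplify $K$ into a ``Lemma~\ref{app}-ready'' piece $(\bar s F_L(p,T)-F_L(p,T)p)\mathcal Q_s(p)^{-1}$ plus a remainder, kill the remainder, apply Lemma~\ref{app}, and recognise $\Delta(fg)(T)$. The paper obtains this decomposition by quoting \cite[Lemma~7.3.2]{CGKBOOK}, which gives directly
\[
K(s,T,p)=\bigl[(F_R(s,T)-F_L(p,T))p-\bar s(F_R(s,T)-F_L(p,T))\bigr]\mathcal Q_s(p)^{-1},
\]
so the remainder is $R(s,T,p)=[F_R(s,T)p-\bar sF_R(s,T)]\mathcal Q_s(p)^{-1}$.

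The genuine gap is in how you dispose of $R$. You assert that $f(s)R(s,T,p)$ is right slice hyperholomorphic in $s$ on a neighbourhood of $\overline{G}_2$ and therefore the $s$-integral vanishes. This is false: any term of $R$ built from resolvent operators of $T$ (and $F_R(s,T)$ in particular) is singular on $\sigma_S(T)\subset G_1\subset G_2$, so $R$ cannot extend holomorphically across $G_2$ in $s$. The correct mechanism, and the one the paper uses, is the \emph{inner $p$-integral}: for fixed $s\in\partial(G_2\cap\mathbb C_J)$ the functions $p\mapsto p\,\mathcal Q_s(p)^{-1}$ and $p\mapsto\mathcal Q_s(p)^{-1}$ are intrinsic slice hyperholomorphic on $\overline G_1$ (since $[s]\cap\overline G_1=\emptyset$), so Theorem~\ref{CIT} gives
\[
\int_{\partial(G_1\cap\mathbb C_J)} F_R(s,T)\,p\,\mathcal Q_s(p)^{-1}\,dp_J\,g(p)=0,\qquad
\int_{\partial(G_1\cap\mathbb C_J)} \bar s\,F_R(s,T)\,\mathcal Q_s(p)^{-1}\,dp_J\,g(p)=0.
\]
Once you swap the order of integration, the remainder disappears for this reason, and then Lemma~\ref{app} applies to the surviving piece exactly as you describe.

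A minor point: you do \emph{not} need $f(p)$ to commute with $F_L(p,T)$ (and in general it does not, since left multiplication by a quaternionic scalar need not commute with a quaternionic operator). After Lemma~\ref{app} you already have $F_L(p,T)f(p)$ sitting to the left of $dp_J\,g(p)$; the only commutation required is $f(p)\,dp_J=dp_J\,f(p)$, which holds because $f$ is intrinsic and $p\in\mathbb C_J$ force $f(p)\in\mathbb C_J$.
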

\begin{proof}
	Let $G_1$ and $G_2$ be two bounded slice Cauchy domains such that contain $\sigma_{S}(T)$ and $ \overline{G}_1 \subset G_2$, with $\overline{G}_2 \subset dom(f) \cap dom(g)$. We choose $p \in \partial(G_1 \cap \mathbb{C}_J)$ and $s \in \partial(G_2 \cap \mathbb{C}_J)$. For every $J \in \mathbb{S}$, from the definitions of $F$-functional calculus, $S$-functional calculus and $Q$- functional calculus, we get
	\begin{eqnarray*}
		&& \Delta f(T) g(T)+f(T) \Delta g(T)-  \mathcal{D}f(T) \mathcal{D} g(T)= \\
		&&=\frac{1}{(2 \pi)^2} \int_{\partial(G_2 \cap \mathbb{C}_J)} f(s) ds_J F_{R}(s,T) \int_{\partial(G_1 \cap \mathbb{C}_J)} S^{-1}_L (p,T) dp_J g(p)+\\
		&& + \frac{1}{(2 \pi)^2} \int_{\partial(G_2 \cap \mathbb{C}_J)} f(s) ds_J S^{-1}_{R}(s,T) \int_{\partial(G_1 \cap \mathbb{C}_J)} F_L (p,T) dp_J g(p)+\\
		&&- \frac{1}{(\pi)^2} \int_{\partial(G_2 \cap \mathbb{C}_J)} \mathcal{Q}_{c,s}(T)^{-1} ds_J f(s) \int_{\partial(G_1 \cap \mathbb{C}_J)} \mathcal{Q}_{c,p}(T)^{-1} dp_J g(p).
	\end{eqnarray*}
	Since the function $f$ is intrinsic by Lemma \ref{inte4} we have
	\begin{eqnarray*}
		&& \Delta f(T) g(T)+f(T) \Delta g(T)-  \mathcal{D}f(T) \mathcal{D} g(T)= \\
		&&=\frac{1}{(2 \pi)^2} \int_{\partial(G_2 \cap \mathbb{C}_J)} f(s) ds_J F_{R}(s,T) \int_{\partial(G_1 \cap \mathbb{C}_J)} S^{-1}_L (p,T) dp_J g(p)+\\
		&& + \frac{1}{(2 \pi)^2} \int_{\partial(G_2 \cap \mathbb{C}_J)} f(s) ds_J S^{-1}_{R}(s,T) \int_{\partial(G_1 \cap \mathbb{C}_J)} F_L (p,T) dp_J g(p)+\\
		&&- \frac{1}{(\pi)^2} \int_{\partial(G_2 \cap \mathbb{C}_J)} f(s) ds_J \mathcal{Q}_{c,s}(T)^{-1} \int_{\partial(G_1 \cap \mathbb{C}_J)} \mathcal{Q}_{c,p}(T)^{-1} dp_J g(p).
	\end{eqnarray*}
	Hence, we have
	\[
	\begin{split}
		\Delta f(T) g(T)&+f(T) \Delta g(T)- \mathcal{D}f(T) \mathcal{D} g(T)
		\\
		&
		= \frac{1}{(2 \pi)^2} \int_{\partial(G_2 \cap \mathbb{C}_J)} \int_{\partial(G_1 \cap \mathbb{C}_J)} f(s) ds_J \bigl[F_{R}(s,T)S^{-1}_{L}(p,T)+
		\\
		&
		+S^{-1}_{R}(s,T) F_{L}(p,T) -4 \mathcal Q_{c,s}(T)^{-1} \mathcal{Q}_{c,p}(T)^{-1} \bigl] dp_J g(p).
	\end{split}
	\]
	By the following equation (see \cite[Lemma 7.3.2]{CGKBOOK})
	\begin{equation}\nonumber
		\begin{split}
			F_R(s,T) & S^{-1}_L(p,T)+S^{-1}_R(s,T)F_L(p,T)-4\mathcal Q_{c,s}(T)^{-1}\mathcal Q_{c,p}(T)^{-1}\\
			&=[(F_R(s,T)-F_L(p,T))p-\overline s(F_R(s,T)-F_L(p,T))]\mathcal Q_s(p)^{-1},
		\end{split}
	\end{equation}
	where $\mathcal{Q}_s(p)=p^2-2s_0p+|s|^2$, we obtain
	\[
	\begin{split}
		&\Delta f(T) g(T)+f(T) \Delta g(T)-  \mathcal{D}f(T)\mathcal{ D} g(T)
		\\
		&= \frac{1}{(2 \pi)^2} \int_{\partial(G_2 \cap \mathbb{C}_J)} \int_{\partial(G_1 \cap \mathbb{C}_J)} f(s) \bigl[\left(F_{R}(s,T)-F_{L}(p,T)\right)p -\bar{s}\left(F_{R}(s,T)-F_{L}(p,T)\right)\bigl] \mathcal{Q}_s(p)^{-1}dp_J g(p).
	\end{split}
	\]
	By the linearity of the integrals follows that
	\begin{eqnarray*}
		&& \Delta f(T) g(T)+f(T) \Delta g(T)-  \mathcal{D}f(T) \mathcal{D} g(T)= \\
		&&= \frac{1}{(2\pi)^2} \int_{\partial(G_2 \cap \mathbb{C}_J)} f(s) ds_J \int_{\partial(G_1 \cap \mathbb{C}_J)} F_R(s,T) p \mathcal{Q}_s(p)^{-1} dp_J g(p)+\\
		&& -\frac{1}{(2\pi)^2} \int_{\partial(G_2 \cap \mathbb{C}_J)} f(s) ds_J \int_{\partial(G_1 \cap \mathbb{C}_J)} F_L(p,T) p \mathcal{Q}_s(p)^{-1} dp_J g(p)+\\
		&& -\frac{1}{(2\pi)^2} \int_{\partial(G_2 \cap \mathbb{C}_J)} f(s) ds_J \int_{\partial(G_1 \cap \mathbb{C}_J)} \bar{s} F_R(s,T) \mathcal{Q}_s(p)^{-1} dp_J g(p)+\\
		&& +\frac{1}{(2\pi)^2} \int_{\partial(G_2 \cap \mathbb{C}_J)} f(s) ds_J \int_{\partial(G_1 \cap \mathbb{C}_J)} \bar{s} F_L(p,T) \mathcal{Q}_s(p)^{-1} dp_J g(p).
	\end{eqnarray*}
	Since the functions $p \mapsto p \mathcal{Q}_s(p)^{-1}$, $p \mapsto \mathcal{Q}_s(p)^{-1}$ are intrinsic slice hyperholomorphic on $\bar{G}_1$, by the Cauchy integral formula, see Theorem \ref{CIT} we have
	$$ \frac{1}{(2\pi)^2} \int_{\partial(G_2 \cap \mathbb{C}_J)} f(s) ds_J \int_{\partial(G_1 \cap \mathbb{C}_J)} F_R(s,T) p \mathcal{Q}_s(p)^{-1} dp_J g(p)=0,$$
	$$ \frac{1}{(2\pi)^2} \int_{\partial(G_2 \cap \mathbb{C}_J)} f(s) ds_J \int_{\partial(G_1 \cap \mathbb{C}_J)} \bar{s}F_R(s,T) \mathcal{Q}_s(p)^{-1} dp_J g(p)=0.$$
	Thus, we get
	\begin{eqnarray*}
		&& \Delta f(T) g(T)+f(T) \Delta g(T)-  \mathcal{D}f(T) \mathcal{D} g(T)= \\
		&& -\frac{1}{(2\pi)^2} \int_{\partial(G_2 \cap \mathbb{C}_J)} f(s) ds_J \int_{\partial(G_1 \cap \mathbb{C}_J)} F_L(p,T) p \mathcal{Q}_s(p)^{-1} dp_J g(p)+\\
		&& +\frac{1}{(2\pi)^2} \int_{\partial(G_2 \cap \mathbb{C}_J)} f(s) ds_J \int_{\partial(G_1 \cap \mathbb{C}_J)} \bar{s} F_L(p,T) \mathcal{Q}_s(p)^{-1} dp_J g(p)\\
		&&= \frac{1}{(2 \pi)^2} \int_{\partial (G_2 \cap \mathbb{C}_J)} f(s) ds_J \int_{\partial (G_1 \cap \mathbb{C}_J)} \bigl[\bar{s} F_L(p,T)- F_L(p,T)p \bigl] \mathcal{Q}_s(p)^{-1} dp_J g(p).
	\end{eqnarray*}
	By applying Lemma \ref{app} with $B:= F_L(p,T)$ and by the definition of the $F$-functional calculus we obtain
	
	\begin{eqnarray*}
		&& \Delta f(T) g(T)+f(T) \Delta g(T)-  \mathcal{D}f(T)\mathcal{D} g(T)=  \frac{1}{2 \pi} \int_{\partial(G_1 \cap \mathbb{C}_J)} F_L(p,T) dp_J f(p)g(p)\\
		&&= \frac{1}{2 \pi} \int_{\partial(G_1 \cap \mathbb{C}_J)} F_L(p,T) dp_J (fg)(p)=\Delta (fg)(T).
	\end{eqnarray*}
	
\end{proof}

\begin{corollary}
	\label{product2}
	Let $T \in \mathcal{BC}(X)$ and assume $g \in N(\sigma_S(T))$ and $f\in SH_R(\sigma_S(T))$ then we have
	\begin{equation}
		\label{star4}
		\Delta (fg)(T)=\Delta f(T) g(T)+f(T) \Delta g(T)-  \mathcal{D}f(T)\mathcal{ D} g(T).
	\end{equation}
\end{corollary}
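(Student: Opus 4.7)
The plan is to mirror the argument used for Theorem \ref{product}, but now exploiting that $f$ is right slice hyperholomorphic and $g$ is intrinsic, so that all of $g(T)$, $\Delta g(T)$, $\mathcal{D}g(T)$ admit both a left and a right integral representation (cf.\ Lemma \ref{inte4} and its $F$- and $Q$-analogues). First I would fix two bounded slice Cauchy domains $G_1,G_2$ with $\sigma_S(T)\subset G_1$, $\overline{G}_1\subset G_2$, and $\overline{G}_2\subset \operatorname{dom}(f)\cap\operatorname{dom}(g)$, together with an imaginary unit $J\in\mathbb{S}$, and place the $s$-integration on $\partial(G_2\cap\mathbb{C}_J)$ and the $p$-integration on $\partial(G_1\cap\mathbb{C}_J)$.

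Next I would represent $\Delta f(T)$, $f(T)$, and $\mathcal{D}f(T)$ with the right slice hyperholomorphic integral formulas in $s$ (using $F_R(s,T)$, $S_R^{-1}(s,T)$, and $\mathcal{Q}_{c,s}(T)^{-1}$, respectively), and represent $g(T)$, $\Delta g(T)$, $\mathcal{D}g(T)$ with the \emph{left} integral formulas in $p$ (using $S_L^{-1}(p,T)$, $F_L(p,T)$, and $\mathcal{Q}_{c,p}(T)^{-1}$). By Lemma \ref{inte4} applied to the intrinsic function $g$, this choice is legitimate and it brings the three terms on the right-hand side of \eqref{star4} into a common double-integral form
\[
\frac{1}{(2\pi)^2}\int_{\partial(G_2\cap\mathbb{C}_J)}\int_{\partial(G_1\cap\mathbb{C}_J)} f(s)\,ds_J\,K(s,T,p)\,dp_J\,g(p),
\]
where the operator kernel is exactly
\[
K(s,T,p)=F_R(s,T)S_L^{-1}(p,T)+S_R^{-1}(s,T)F_L(p,T)-4\mathcal{Q}_{c,s}(T)^{-1}\mathcal{Q}_{c,p}(T)^{-1}.
\]

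At this point the computation becomes identical to the one in the proof of Theorem \ref{product}: I would invoke \cite[Lemma 7.3.2]{CGKBOOK} to rewrite $K(s,T,p)$ as
\[
\bigl[(F_R(s,T)-F_L(p,T))p-\overline{s}(F_R(s,T)-F_L(p,T))\bigr]\mathcal{Q}_s(p)^{-1},
\]
split the result into four double integrals, and observe that the two pieces containing $F_R(s,T)$ vanish because $p\mapsto p\mathcal{Q}_s(p)^{-1}$ and $p\mapsto \mathcal{Q}_s(p)^{-1}$ are intrinsic slice hyperholomorphic on $\overline{G}_1$ (Cauchy integral Theorem \ref{CIT}). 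The remaining two pieces combine into
\[
\frac{1}{(2\pi)^2}\int_{\partial(G_2\cap\mathbb{C}_J)}\int_{\partial(G_1\cap\mathbb{C}_J)} f(s)\,ds_J\,[\overline{s}F_L(p,T)-F_L(p,T)p]\,\mathcal{Q}_s(p)^{-1}\,dp_J\,g(p).
\]

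Finally I would apply Lemma \ref{app} with $B:=F_L(p,T)$ to the inner $s$-integral, picking up the intrinsic function $f$ evaluated at $p$, which reduces the expression to
\[
\frac{1}{2\pi}\int_{\partial(G_1\cap\mathbb{C}_J)} F_L(p,T)\,dp_J\, f(p)g(p)=\Delta(fg)(T),
\]
where the last equality uses that $fg\in SH_R(\sigma_S(T))$ together with intrinsicity of $g$, so the $F$-functional calculus of $fg$ coincides with the left representation here. The main subtlety, and the place where intrinsicity of $g$ is crucial, is in the very first step: it is precisely the freedom to represent the three operators depending on $g$ by the \emph{left} integral formulas that aligns the kernel with $K(s,T,p)$ and lets the proof of Theorem \ref{product} be re-run verbatim.
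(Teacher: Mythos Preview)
Your strategy---rerun the proof of Theorem \ref{product} with the roles of $f$ and $g$ swapped---is exactly what the paper has in mind (the corollary is stated without proof, as a symmetric counterpart). The first half of your argument is fine: with $f\in SH_R$ and $g\in N$ you can indeed write all six operators via right integrals in $s$ and left integrals in $p$, and you land on the same kernel $K(s,T,p)$ and the same identity from \cite[Lemma 7.3.2]{CGKBOOK}.

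However, the last two steps break down. First, when you invoke Lemma \ref{app} with $B=F_L(p,T)$ you write ``picking up the intrinsic function $f$''---but here $f$ is \emph{not} intrinsic, and Lemma \ref{app} genuinely needs the integrand on the far left to be intrinsic. (Take $f\equiv a$ a non--real constant: the $s$--integral then equals $aB$, not $Ba$, so the conclusion $Bf(p)$ fails.) Second, even if that step produced $\frac{1}{2\pi}\int F_L(p,T)\,dp_J\,f(p)g(p)$, this is the \emph{left} $F$--functional calculus, whereas $fg\in SH_R(\sigma_S(T))$ only; intrinsicity of $g$ does not make $fg$ left slice hyperholomorphic, so you cannot identify that integral with $\Delta(fg)(T)$.

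The fix is to mirror the proof faithfully rather than reuse it verbatim: place $s$ on the \emph{inner} domain $G_1$ and $p$ on the \emph{outer} domain $G_2$, use the companion form of \cite[Lemma 7.3.2]{CGKBOOK} with $\mathcal{Q}_s(p)^{-1}$ on the left, let the $F_L(p,T)$ terms vanish by Cauchy in $s$, and then apply (the mirror of) Lemma \ref{app} in the $p$--variable---which is now legitimate because $g\in N$. You end with $\frac{1}{2\pi}\int_{\partial(G_1\cap\mathbb{C}_J)} f(s)g(s)\,ds_J\,F_R(s,T)=\Delta(fg)(T)$ via the right $F$--functional calculus, which is exactly what $fg\in SH_R$ allows.
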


\begin{remark}
	The product $fg$ in Theorem \ref{product} and Corollary \ref{product2} is respectively slice hyperholomorphic left or right slice hyperholomorphic.
\end{remark}

\begin{remark}
	Formula \eqref{pr1} is a general case of the well-known formula $\Delta(qg(q))=q \Delta (g(q))+2 \mathcal{D}(g(q))$. Indeed, it is enough to change the operator $T$ with $q$ and to take $f(q):=q$ in formula \eqref{pr1}.
\end{remark}


\begin{thebibliography}{99}
	
	\bibitem{COF}
	D.Alpay, F.Colombo, I.Sabadini,
	{\em Quaternionic de Branges spaces and characteristic operator function},
	SpringerBriefs in Mathematics, Springer, Cham, 2020.
	
	\bibitem{ack}
	D.Alpay, F.Colombo,  D.P.Kimsey,
	{\em  The spectral theorem for quaternionic unbounded normal operators based on the $S$-spectrum},
	J. Math. Phys., {\bf 57}(2016), pp. 023503, 27.
	
	\bibitem{ACGS}  D.Alpay, F.Colombo, J.Gantner, I.Sabadini, {\em A new resolvent equation for the S-functional calculus}, J.Geom. Anal., {\bf 25} (2015), pp. 1939--1968.
	
	
	
	
	\bibitem{Hinfty}
	D.Alpay, F.Colombo, J.Gantner, I.Sabadini,
	{\em The $H^\infty$functional calculus based on the S-spectrum for quaternionic operators and for n-tuples of noncommuting operators},
	J. Funct. Anal., {\bf 271}(2016), pp. 1544--1584.
	
	
	
	
	
	\bibitem{ACSBOOK}
	D.Alpay, F.Colombo, I.Sabadini,
	{\em Slice Hyperholomorphic Schur Analysis},
	Volume 256 of {\em Operator Theory: Advances and Applications}. Birkh\"{a}user, Basel. 2017.
	
	\bibitem{frac6}
	L.Baracco, F.Colombo, M. M.Peloso, S.Pinton,
	{\em  Fractional powers of higher order vector operators on bounded and unbounded domains},
	Preprint. arXiv:2112.05380.
	
	\bibitem{B} H.Begeher, \emph{Iterated integral opererators in Clifford analysis}, J. Anal. Appl. \textbf{18} (1999), 361-377 .
	
	\bibitem{BF}
	G.Birkhoff, J.Von Neumann, {\em The logic of quantum mechanics}, Ann. of
	Math., {\bf 37} (1936), 823--843.
	
	
	\bibitem{CMF}  I.Cação, M.I.Falcão, H.Malonek, {\em Laguerre derivative and monogenic Laguerre polynomials: An
		operational approach}. Mathematical and Computer Modelling 53. 1084-1094. (2011)
	
	\bibitem{CMF1}  I.Cação, M. I.Falcão,  H.Malonek, {\it Hypercomplex Polynomials, Vietoris' Rational Numbers and a Related Integer Numbers Sequence},
	Complex Anal. Oper. Theory, {\bf 11}, (2017), 1059--1076.
	
	
	\bibitem{FIVEDIM} F.Colombo, A.De Martino, S. Pinton, I. Sabadini, \emph{
		The fine structure of the spectral theory on the $S$-spectrum in dimension five}, preprint 2022.
	
	
	\bibitem{CDS} F.Colombo, A.De Martino, I.Sabadini, \emph{The $\mathcal{F}$-resolvent equation and Riesz projectors for the $\mathcal{F}$-functional calculus }, (submitted) arXiv 2112.04830.
	
	\bibitem{CG} F.Colombo, J.Gantner, \emph{Formulations of the $ \mathcal{F}$- functional calculus and some consequences}, Proceedings of the Royal Society of Edinburgh, \textbf{146 A}  (2016), 509-545.
	
	
	
	
	\bibitem{64FRAC}
	F.Colombo, J.Gantner,
	{\em An application of the $S$-functional calculus to fractional diffusion processes},
	Milan J. Math., {\bf 86} (2018), 225--303.
	
	
	
	\bibitem{FJBOOK}
	F.Colombo, J.Gantner,
	{\em Quaternionic closed operators, fractional powers and fractional diffusion processes},
	Operator Theory: Advances and Applications, 274. Birkh\"auser/Springer, Cham, 2019. viii+322.
	
	\bibitem{CGKBOOK} F.Colombo, J.Gantner, D.P.Kimsey,
	{\em Spectral theory on the $S$-spectrum for quaternionic operators},
	Operator Theory: Advances and Applications, 270.
	Birkh\"auser/Springer, Cham, 2018. ix+356 pp.
	
	
	\bibitem{frac4}
	F.Colombo, D.Deniz-Gonzales, S.Pinton,
	{\em Fractional powers of vector operators with first order boundary conditions},
	J. Geom. Phys., {\bf 151} (2020), 103618, 18 pp.
	
	\bibitem{frac5}
	F.Colombo, D.Deniz-Gonzales, S.Pinton,
	{\em Non commutative fractional Fourier law in bounded and unbounded domains},
	Complex Anal. Oper. Theory, {\bf 15} (2021), no. 7, Paper No. 114, 27 pp.
	
	
	
	\bibitem{SPECT-CLIFF}
	F.Colombo, D.P.Kimsey, {\em The spectral theorem for normal operators on a Clifford module}, Anal. Math. Phys. 12 (2022), no. 1, Paper No. 25.
	
	
	\bibitem{CS}
	F.Colombo, I.Sabadini, {\em The F-functional calculus for unbounded operators},
	J. Geom. Phys., {\bf 86} (2014), 392--407.
	
	\bibitem{CSS1}  F.Colombo, I.Sabadini, F.Sommen,
	{\em The Fueter mapping theorem in integral form and the F-functional calculus}, Math. Methods Appl. Sci., {\bf 33} (2010), 2050--2066.
	
	
	
	
	
	\bibitem{MR2752913}
	F.Colombo, I.Sabadini, D.C.Struppa,
	{\em Noncommutative functional calculus. Theory and applications of slice hyperholomorphic functions},
	Volume 289 of {\em Progress
		in Mathematics}.
	Birkh\"auser/Springer Basel AG, Basel. 2011.
	
	
	\bibitem{JFACSS}
	F. Colombo, I.Sabadini, D.C.Struppa,
	{\em A new functional calculus for non commuting operators},
	J. Funct. Anal., {\bf 254} (2008), 2255-2274.
	
	\bibitem{CSS3} F.Colombo, I.Sabadini, D.C.Struppa, \emph{Michele Sce's Works in Hypercomplex Analysis. A Translation with Commentaries}, Birkhäuser/Springer Basel AG, Basel, 2020.
	
	\bibitem{CS0} A.K.Common, F.Sommen, \emph{Axial monogenic functions from holomorphic functions}, J.
	Math. Anal. Appl., \textbf{179} (1993), 610-629.
	
\bibitem{DP} A.De Martino, S.Pinton, \emph{A polynalytic functional calcuus of order 2 on the $S$-Spectrum}, preprint 2022.
	
	\bibitem{F} R.Fueter, \emph{Die Funktionentheorie der Differentialgleichungen $\Delta u = 0$ und $\Delta\Delta u = 0$ mit vier reellen Variablen},  Comm. Math. Helv., {\bf 7} (1934), 307-330.
	\bibitem{J} B.Jefferies, {\em Spectral properties of noncommuting operators},
	Lecture Notes in Mathematics, 1843, Springer-Verlag, Berlin, 2004.
	\bibitem{JM} B.Jefferies, A.McIntosh, J.Picton-Warlow,  {\em The monogenic functional
		calculus}, Studia Math., {\bf 136} (1999), 99-119.
\bibitem{KP} S.G.Krantz, H.R.Parks \emph{A Primer of Real Analytic Functions} 2nd edition, Birkhäuser/Springer, Boston, 2002.

	\bibitem{Dixan} D.Pena-Pena, \emph{Cauchy Kowalevski extensions, Fueter's theorems and boundary values of special systems in Clifford analysis}, PhD Dissertation, Gent, 2008.
	
	\bibitem{Q} T.Qian, \emph{Generalization of Fueters result to $\mathbb{R}^{n+1}$}, Rend. Mat. Acc. Lincei, \textbf{9} (1997),  111-117.
	
	\bibitem{TAOBOOK}
	T.Qian, P.Li,
	\emph{Singular integrals and Fourier theory on Lipschitz boundaries},
	Science Press Beijing, Beijing; Springer, Singapore, 2019. xv+306 pp.
	
	
	
	\bibitem{S} M.Sce, \emph{Osservazioni sulle serie di potenze nei moduli quadratici}, Atti Accad. Naz. Lincei. Rend. CI. Sci.Fis. Mat. Nat. \textbf{23} (1957), 220-225.
\end{thebibliography}
\end{document}